\documentclass[a4paper,11pt]{amsart}
\usepackage{amssymb,amscd,amsxtra,psfrag,xypic}
\usepackage[dvipdfmx]{graphicx}
\usepackage{color}

\setlength{\topmargin}{5mm}
\setlength{\oddsidemargin}{7mm}
\setlength{\evensidemargin}{7mm}
\setlength{\marginparwidth}{0cm}
\setlength{\marginparsep}{0cm}
\setlength{\textheight}{220mm}
\setlength{\textwidth}{145mm}
\setlength{\footskip}{20mm}
\setlength{\headheight}{13pt}
\setlength{\headsep}{25pt}


\usepackage[pdftex]{hyperref}
\hypersetup{%
bookmarks=true
bookmarksnumbered=true,%
colorlinks=true,%
setpagesize=false,%
pdftitle={},%
pdfauthor={Takuzo Okada}%
pdfsubject={}%
pdfkeywords={}}

\theoremstyle{plain}
\newtheorem{Thm}{Theorem}[section]
\newtheorem{Lem}[Thm]{Lemma}
\newtheorem{Cor}[Thm]{Corollary}
\newtheorem{Prop}[Thm]{Proposition}


\theoremstyle{definition}
\newtheorem{Def}[Thm]{Definition}
\newtheorem{Def-Lem}[Thm]{Definition-Lemma}
\newtheorem{Cond}[Thm]{Condition}
\newtheorem{Rem}[Thm]{Remark}

\newtheorem*{Ack}{Acknowledgments}

\newtheorem{Setting}[Thm]{Setting}

\theoremstyle{remark}




\newcommand{\prt}{\partial}
\newcommand{\rank}{\operatorname{rank}}

\newcommand{\Sing}{\operatorname{Sing}}
\newcommand{\Spec}{\operatorname{Spec}}

\newcommand{\Crit}{\operatorname{Crit}}
\newcommand{\Cox}{\operatorname{Cox}}



\newcommand{\mbA}{\mathbb{A}}

\newcommand{\mbC}{\mathbb{C}}
\newcommand{\mbF}{\mathbb{F}}
\newcommand{\mbG}{\mathbb{G}}

\newcommand{\mbP}{\mathbb{P}}

\newcommand{\mbZ}{\mathbb{Z}}

\newcommand{\mcE}{\mathcal{E}}

\newcommand{\mcL}{\mathcal{L}}
\newcommand{\mcM}{\mathcal{M}}
\newcommand{\mcN}{\mathcal{N}}
\newcommand{\mcO}{\mathcal{O}}

\newcommand{\mcQ}{\mathcal{Q}}

\newcommand{\mcX}{\mathcal{X}}

\newcommand{\mfm}{\mathfrak{m}}

\newcommand{\mfp}{\mathfrak{p}}

\newcommand{\msp}{\mathsf{p}}
\newcommand{\msq}{\mathsf{q}}
\newcommand{\K}{\Bbbk}

\newcommand{\inj}{\hookrightarrow}


\newcommand{\ratmap}{\dashrightarrow}

\newcommand{\CH}{\operatorname{CH}}

\newcommand{\chara}{\operatorname{char}}

\title[Stable rationality of del Pezzo fibrations]{Stable rationality of del Pezzo fibrations of low degree over projective spaces}
\author{Igor~Krylov \and Takuzo~Okada}
\address{Max Planck Institute for Mathematics, Vivatsgasse 7, Bonn 53111 Germany}
\email{igor@krylov.su}
\address{Department of Mathematics, Faculty of Science and Engineering, Saga University, Saga 840-8502 Japan}
\email{okada@cc.saga-u.ac.jp}
\subjclass[2000]{Primary 14E08; Secondary 14M22.}
\date{}

\begin{document}

\begin{abstract}
The main aim of this article is to show that a very general $3$-dimensional del Pezzo fibration of degree 1,2,3 is not stably rational except for a del Pezzo fibration of degree 3 belonging to explicitly described 2 families.
Higher dimensional generalizations are also discussed and we prove that a very general del Pezzo fibration of degree 1,2,3 defined over the projective space is not stably rational provided that the anticanonical divisor is not ample.
\end{abstract}

\maketitle


\section{Introduction} \label{sec:intro}

In this paper we study stable rationality of del Pezzo fibrations of degrees $1$, $2$, and $3$. 
Recent breakthroughs by Voisin \cite{Voisin}, expanded by Colliot-Th\'el\`ene and Pirutka \cite{CTP}, have changed the landscape of the study of stable rationality. 
Consequently failure of stable rationality was proven for large classes of rationally connected varieties (see e.g.\ \cite{Totaro} for hypersurfaces and \cite{AO} for conic bundles both in arbitrary dimension).
For many families of varieties of dimension $\geqslant 4$ in these classes, even rationality was not known.
We apply the techniques of Colliot-Th\'el\`ene and Pirutka to del Pezzo fibrations: a subclass of Mori fiber spaces.

\begin{Thm} \label{mainthm1}
Let $X$ be a very general $n$-dimensional nonsingular del Pezzo fibration of degree $1$, $2$, or $3$ over $\mbP^{n-2}$ embedded as a hypersurface in a toric $\mbP (1,1,2,3)$-, $\mbP (1,1,1,2)$-, or $\mbP^3$-bundle over $\mbP^{n-2}$, respectively, where $n \ge 3$.
If $-K_X$ is not ample, then $X$ is not stably rational.
\end{Thm}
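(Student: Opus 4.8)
The plan is to apply the degeneration method of Colliot-Th\'el\`ene and Pirutka, which reduces non-stable-rationality of a very general member of a family to exhibiting a single special fiber $X_0$ that (i) is mildly singular — admitting a resolution $\widetilde{X}_0 \to X_0$ that is \emph{universally $\CH_0$-trivial} — and (ii) has a nontrivial obstruction, for which the standard choice is a nonzero unramified cohomology class, typically in $H^2_{\mathrm{nr}}(\K(\widetilde{X}_0), \mbZ/2)$. So the proof splits into a specialization step, a construction of a suitable degenerate model, a resolution/$\CH_0$-triviality check, and a computation of the Brauer-type obstruction.

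\textbf{Step 1: Reduction to a fixed degenerate fiber.} First I would set up the family: since $-K_X$ is not ample, the del Pezzo fibration $\pi\colon X \to \mbP^{n-2}$ is ``sufficiently twisted,'' i.e.\ some positivity parameter of the defining hypersurface equation in the $\mbP(1,1,2,3)$-, $\mbP(1,1,1,2)$-, or $\mbP^3$-bundle is large enough. Using the Nishimura-type / Colliot-Th\'el\`ene--Pirutka specialization lemma, it suffices to produce one (possibly singular) member $X_0$ of the family over a DVR, together with a resolution $f\colon \widetilde{X}_0 \to X_0$ which is a universally $\CH_0$-trivial morphism, such that $\widetilde{X}_0$ is not stably rational; for the last point a nonzero class in $H^{*}_{\mathrm{nr}}$ suffices.

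\textbf{Step 2: Construction of $X_0$ and its conic-bundle / quadric structure.} The key is to choose $X_0$ so that it carries, birationally, the structure of a standard conic bundle (degree $1,2$) or a quadric surface bundle over a rational base, because those are exactly the situations where the Artin--Mumford / Colliot-Th\'el\`ene--Ojanguren and the Pirutka unramified-$H^2$ machinery applies. Concretely, over $\mbP^{n-2}$ one degenerates the del Pezzo fibration to one birational to a conic bundle over a $\mbP^1$-bundle (or $\mbP^2$-bundle) over $\mbP^{n-2}$, and arranges the discriminant divisor to be of the Artin--Mumford type — sufficiently ample and sufficiently general so that its singularities are controlled and it supports a nontrivial $2$-torsion \'etale double cover giving a nonzero unramified Brauer class. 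The ``not ample'' hypothesis on $-K_X$ is precisely what guarantees the twisting parameter is large enough for the discriminant to have the required positivity.

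\textbf{Step 3: $\CH_0$-triviality of the resolution.} I would resolve $X_0$ explicitly using the ambient toric geometry: the singularities of $X_0$ are isolated (or along loci over which the generic fiber is nice) and of a type — e.g.\ ordinary double points, or cones over rational curves — whose exceptional divisors are rational or rationally connected, hence universally $\CH_0$-trivial; then one checks fiberwise that $f\colon \widetilde{X}_0 \to X_0$ has universally $\CH_0$-trivial fibers and invokes the standard lemma that such a morphism is universally $\CH_0$-trivial. This is the step I expect to be the main obstacle: one must simultaneously keep the singularities of $X_0$ mild enough to resolve with $\CH_0$-trivial fibers, \emph{and} keep the associated discriminant divisor generic enough to carry a nonzero unramified class — these two requirements pull in opposite directions, and verifying both for all degrees $1,2,3$ and all $n \ge 3$ (with the correct interplay with the non-ampleness of $-K_X$) is the delicate heart of the argument.

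\textbf{Step 4: Nonvanishing of the obstruction and conclusion.} Finally, using the Colliot-Th\'el\`ene--Ojanguren / Pirutka formula for unramified cohomology of conic and quadric surface bundles, I would show $H^2_{\mathrm{nr}}(\K(\widetilde{X}_0),\mbZ/2) \ne 0$ by checking the residue conditions along the components of the discriminant — this is where the explicit choice of the defining equation of $X_0$ and the combinatorics of the double cover of the discriminant enter. Combining Steps 1--4, $\widetilde{X}_0$ is not stably rational, hence by specialization the very general $X$ in the family is not stably rational, which is the assertion of the theorem.
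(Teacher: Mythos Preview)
Your proposal follows the Colliot-Th\'el\`ene--Pirutka specialization framework, which is correct, but the paper's approach diverges from yours at the crucial point: the choice of special fiber and the obstruction. You degenerate in characteristic zero to a conic- or quadric-bundle model and invoke unramified $H^2$; the paper instead degenerates to positive characteristic ($p=2$ for degrees $1,2$, and $p=2$ or $3$ for degree $3$) and uses Koll\'ar's construction of an invertible subsheaf $\mcM \hookrightarrow (\Omega_X^{n-1})^{\vee\vee}$ on a $p$-cyclic cover. The obstruction is the nonvanishing of $H^0(Y,\Omega_Y^{n-1})$ on a resolution $Y$, detected via Totaro's lemma, and the non-ampleness of $-K_X$ enters as precisely the numerical condition guaranteeing $H^0(X,\mcM)\neq 0$.

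Your plan has a genuine gap beyond mere vagueness. In Step~2 you assert that a del Pezzo fibration of degree $1$, $2$, or $3$ can be degenerated to something birational to a standard conic (or quadric surface) bundle with an Artin--Mumford-type discriminant. For degree $4$ this is the Hassett--Tschinkel mechanism, but a degree~$1$ del Pezzo surface has no conic-bundle structure (its anticanonical system is an elliptic pencil with a single base point), and for degree~$3$ cubic surface fibrations there is no evident quadric-bundle model either. You would need a completely different construction of $X_0$ in those cases, and none is proposed. Even for degree~$2$, where a conic-bundle model is plausible, you have not produced the explicit $X_0$, verified that its singularities admit a universally $\CH_0$-trivial resolution, or computed the residues---and you acknowledge that these requirements ``pull in opposite directions.'' The paper sidesteps all of this by working in characteristic~$p$, where the del Pezzo fibration is already a $p$-cyclic cover of a simpler base, Koll\'ar's sheaf $\mcM$ is available for free, and the singularity analysis reduces to checking that a generic branch section has only (almost) nondegenerate critical points.
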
 

We use the Chow group of zero cycles $\CH_0(X)$ to detect stable rationality. 
If a variety is not universally $\CH_0$-trivial, then it is not stably rational.
To prove Theorem \ref{mainthm1} we find a non universally $\CH_0$-trivial reduction $\overline{X}$ of $X$ to a finite characteristic.
To do this we use Koll\'ar's technique to show that a reduction to characteristic $2$ for del Pezzo fibrations of degrees $1,2$ and to characteristic $3$ (or $2$) for del Pezzo fibrations of degree $3$ is not universally $\CH_0$-trivial under a suitable condition such as the ampleness of the anti-canonical bundle.
We then use the specialization theorem \cite[Th\'eor\`eme~1.14]{CTP} of Coliot-Th\'el\`ene and Pirutka to lift the results back to characteristic $0$.

Theorem \ref{mainthm1} is not in the strongest form of the result of this paper: we obtain stronger results in Theorems \ref{thm:dP1}, \ref{thm:dP2} and \ref{thm:dP3}, whose statements require some preparations.
These cover the following varieties as a special case.

\begin{Thm} \label{mainthm3}
Suppose that $n \ge 3$.
The following varieties are not stably rational.
\begin{enumerate}
\item A double cover of $\mbP^{n-2} \times \mbP^2$ branched along a very general divisor of bi-degree $(2 m,4)$ for $m \ge (n-1)/2$.
\item A triple cover of $\mbP^{n-2} \times \mbP^2$ branched along a very general divisor of bi-degree $(3 m,3)$ for $m \ge (n-1)/3$.
\item A very general hypersurface of bi-degree $(d,3)$ in $\mbP^{n-2} \times \mbP^3$ for $d \ge n-1$.
\end{enumerate}
\end{Thm}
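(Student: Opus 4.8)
The plan is to deduce all three statements from Theorems~\ref{thm:dP2} and~\ref{thm:dP3} by exhibiting the listed varieties as nonsingular del Pezzo fibrations over $\mbP^{n-2}$ embedded in suitable \emph{split} toric bundles, and then checking that the numerical hypotheses of those theorems reduce, in this split situation, to the stated bounds on $m$ and $d$. The genuine work is the geometric identification together with the matching of genericity; the hard input---a non universally $\CH_0$-trivial reduction to characteristic $2$ or $3$---is provided by Theorems~\ref{thm:dP2} and~\ref{thm:dP3}.

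For~(1), put $Y=\mbP^{n-2}\times\mbP^2$ and $L_0=\mcO_Y(m,2)$. Homogenising $w^2=s_B$ turns the double cover of $Y$ branched along $B\in|2L_0|=|\mcO_Y(2m,4)|$ into the hypersurface $X=\{w^2=f_{(2m,4)}(x,u)\}$ in a split toric $\mbP(1,1,1,2)$-bundle $T$ over $\mbP^{n-2}$, with $w$ the weight-$2$ coordinate (suitably twisted along the base); its fibre is a double cover of $\mbP^2$ branched over a quartic, i.e.\ a del Pezzo surface of degree $2$. Completing the square shows conversely that \emph{every} degree-$2$ del Pezzo fibration in $T$ is such a double cover, so the two families, hence the two notions of very-generality, coincide. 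For~(2), the cyclic triple cover of $Y$ branched along $B\in|3L_0|=|\mcO_Y(3m,3)|$, now with $L_0=\mcO_Y(m,1)$, is the hypersurface $X=\{w^3=f_{(3m,3)}(x,u)\}$ in a split toric $\mbP^3$-bundle over $\mbP^{n-2}$, whose fibre is a cyclic triple cover of $\mbP^2$ branched over a cubic, i.e.\ a cubic surface; and for~(3) a hypersurface of bi-degree $(d,3)$ in $\mbP^{n-2}\times\mbP^3$ is directly a degree-$3$ del Pezzo fibration in the trivial $\mbP^3$-bundle. In all three cases the linear system in play is very ample, so Bertini gives that a very general member $X$ is nonsingular; in case~(1) one also checks, using that $f$ has positive degree in $x_0,x_1,x_2$, that $X$ avoids the $\tfrac12(1,1,1)$-quotient singular section $\{x_0=x_1=x_2=0\}$ of $T$, so that $X$ is genuinely smooth there.

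To match the hypotheses, compute $-K_X$: the cyclic cover formula $-K_X=\pi^{*}\bigl(-K_Y-(r-1)L_0\bigr)$ gives $-K_X=\pi^{*}\mcO_Y(n-1-m,1)$ for~(1) ($r=2$) and $-K_X=\pi^{*}\mcO_Y(n-1-2m,1)$ for~(2) ($r=3$), while adjunction in $\mbP^{n-2}\times\mbP^3$ gives $-K_X=\mcO(n-1-d,1)|_X$ for~(3). Reading off, in each case, the positivity condition governing the degeneracy of the characteristic-$2$, resp.\ characteristic-$3$, reduction that underlies Theorems~\ref{thm:dP2} and~\ref{thm:dP3}---a condition which is in general weaker than the non-ampleness of $-K_X$, and strictly so for the cyclic covers---one arrives exactly at $m\ge(n-1)/2$ in~(1), $m\ge(n-1)/3$ in~(2), and $d\ge n-1$ in~(3) (for~(3) with $n\ge4$ the last is in fact equivalent to $-K_X$ not being ample, so that case already follows from Theorem~\ref{mainthm1}). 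Theorems~\ref{thm:dP2} and~\ref{thm:dP3} then complete the proof. The point I expect to need the most care is statement~(2): cyclic triple covers form a proper subfamily of all degree-$3$ del Pezzo fibrations in the ambient $\mbP^3$-bundle---unlike case~(1), one cannot bring a general cubic equation in $w$ to the pure form $w^3+(\ast)$ by a change of the variable $w$---so one must ensure that the characteristic-$3$ construction behind Theorem~\ref{thm:dP3} is carried out along the cyclic-cover locus and that this locus still meets the ``numerically good'' and smooth loci as expected; a secondary issue is the quasi-smoothness of $X$ in the singular toric ambient appearing in~(1).
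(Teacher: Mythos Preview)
Your proposal is correct and matches the paper's approach exactly: the paper simply records the parameter values $(\lambda,\mu,\nu) = (0,0,m)$ for~(1) and $(\theta,\lambda,\mu,\nu) = (3m,0,0,m)$, $(d,0,0,0)$ for~(2),~(3), then appeals to Theorems~\ref{thm:dP2} and~\ref{thm:dP3}. Your flagged concern about~(2) is legitimate but is resolved by the structure of the proof of Theorem~\ref{thm:dP3}: in the relevant case $\theta = 3\nu$ the characteristic-$3$ degeneration used (Setting~\ref{setting:dP3-2}) is precisely $w^3 + f = 0$, i.e.\ a cyclic cover, so the specialization argument of Theorem~\ref{thm:spCH0} runs entirely within the cyclic-cover locus and yields non-stable-rationality for a very general member of that subfamily.
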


Note that the variety in (1) (resp.\ (2) and (3)), together with the morphism to the first factor $\mbP^{n-2}$, is a del Pezzo fibration of degree $2$ (resp.\ degree $3$).
The results (2) and (3) improve the corresponding results of \cite{Kollar2}. 

It should be pointed out that Theorem \ref{mainthm1} or even the above mentioned stronger versions do not cover many families which we expect are not stably rational when $n \ge 4$: in that case a nonsingular del Pezzo fibration of degree $1,2$ or $3$ over $\mbP^{n-2}$ is not necessarily  embedded in a toric $\mbP (1,1,2,3)$-, $\mbP (1,1,1,2)$-, or $\mbP^3$-bundle over $\mbP^{n-2}$.

Let us turn our attention to the $3$-dimensional case.
It is known that a nonsingular del Pezzo fibration $X/\mbP^1$ of degree $1,2,3$ is birationally rigid if $-K_X$ is not in the interior of the mobile cone, which implies non-rationality of $X$ (cf.\ \cite{Puk, Gri1, Gri2}).
Moreover it is proved in \cite{C08} that a very general nonsingular del Pezzo fibration $X/\mbP^1$ of degree 3 is rational if and only if $X$ is a hypersurface of bi-degree $(1,3)$ in $\mbP^1 \times \mbP^3$.
The results of this paper, Theorems \ref{thm:dP1}, \ref{thm:dP2}, \ref{thm:dP3}, in dimension three are very satisfying.
They cover all families of nonsingular del Pezzo fibrations of degree $1$, $2$ embedded in a toric $\mbP (1,1,2,3)$-, $\mbP (1,1,1,2)$ bundle over $\mbP^1$, respectively, and all but two families of nonsingular del Pezzo fibrations of degree $3$ embedded in a toric $\mbP^3$-bundle over $\mbP^1$.
Moreover $3$-dimensional nonsingular del Pezzo fibrations of degrees $1,2,3$ can always be embedded into a toric $\mbP (1,1,2,3)$-, $\mbP (1,1,1,2)$-, or $\mbP^3$-bundle over $\mbP^1$ (see e.g.\ \cite{Puk}).
Thus we get the following theorem.

\begin{Thm} \label{mainthm2}
A very general $3$-dimensional nonsingular del Pezzo fibration of degree $1,2,3$ is not stably rational except when $X$ is a del Pezzo fibration of degree $3$ and belongs to one of the explicitly described two families.
\end{Thm}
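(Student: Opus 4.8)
The plan is to deduce Theorem~\ref{mainthm2} from the stronger Theorems~\ref{thm:dP1}, \ref{thm:dP2}, \ref{thm:dP3} by checking that in dimension $n=3$ every family of nonsingular del Pezzo fibrations $X/\mbP^1$ of degree $1,2,3$ is covered by those theorems, with the stated exceptions. First I would recall the structural fact (attributed to Pukhlikov \cite{Puk}) that a $3$-dimensional nonsingular del Pezzo fibration of degree $1$, $2$, or $3$ over $\mbP^1$ can always be realized as a hypersurface in a toric $\mbP(1,1,2,3)$-, $\mbP(1,1,1,2)$-, or $\mbP^3$-bundle over $\mbP^1$ respectively; this reduces the classification to the combinatorial data of the bundle (the twisting integers) and the bi-degree of the defining hypersurface. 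So the proof is essentially a bookkeeping argument: enumerate the finitely many deformation families arising this way, and in each case either invoke the relevant Theorem (\ref{thm:dP1}, \ref{thm:dP2}, or \ref{thm:dP3}) to conclude non-stable-rationality, or identify it as one of the two exceptional degree $3$ families.

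Next I would go degree by degree. For degree $1$ and degree $2$ the claim is that \emph{all} such families are covered, so I would verify that the numerical hypotheses of Theorems~\ref{thm:dP1} and \ref{thm:dP2} — which in the $n=3$ case should reduce to mild positivity conditions on the twisting parameters, automatically satisfied once one excludes the product case or imposes that the fibration is genuinely nontrivial — hold for every family in the list. For degree $3$, the subtlety is the two exceptional families: by the rationality result of \cite{C08}, a very general degree $3$ del Pezzo fibration over $\mbP^1$ is rational precisely when $X$ is a $(1,3)$ hypersurface in $\mbP^1\times\mbP^3$, and a rational variety is of course stably rational, so that family must be excluded; the second exceptional family should be the one where $-K_X$ is ample (or, more precisely, where the hypothesis of Theorem~\ref{thm:dP3} on the anticanonical class fails), for which our method produces no obstruction. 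I would spell out exactly which bundle/bi-degree data give these two families and check that Theorem~\ref{thm:dP3} applies to every remaining family.

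The main obstacle I anticipate is purely organizational rather than conceptual: one must be careful that the classification of $3$-fold del Pezzo fibrations into hypersurfaces in toric bundles is \emph{exhaustive up to deformation} (so that ``very general'' in each family is meaningful and the specialization argument of \cite{CTP} underlying Theorems~\ref{thm:dP1}--\ref{thm:dP3} applies), and that the normalization of the toric bundle data does not accidentally merge or split families in a way that miscounts the exceptions. A secondary point is to confirm that the ``$-K_X$ not ample'' hypothesis of Theorem~\ref{mainthm1}, translated into the language of the stronger theorems, corresponds precisely to removing exactly the degree $3$ family with ample anticanonical class and nothing more. Once these verifications are in place the theorem follows immediately, so I would present it as a short corollary with the case analysis displayed explicitly.

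\begin{proof}[Proof of Theorem~\ref{mainthm2}]
As explained above, this is a consequence of Theorems~\ref{thm:dP1}, \ref{thm:dP2} and \ref{thm:dP3} together with the fact that every $3$-dimensional nonsingular del Pezzo fibration of degree $1,2,3$ over $\mbP^1$ is, respectively, a hypersurface in a toric $\mbP(1,1,2,3)$-, $\mbP(1,1,1,2)$-, or $\mbP^3$-bundle over $\mbP^1$ (see \cite{Puk}). Running through the resulting finite list of families and applying the cited theorems disposes of all of them except the two degree $3$ families singled out in Theorem~\ref{thm:dP3}, one of which consists of the rational $(1,3)$ hypersurfaces in $\mbP^1\times\mbP^3$ of \cite{C08}.
\end{proof}
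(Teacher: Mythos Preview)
Your proposal is correct and takes essentially the same approach as the paper: the paper deduces Theorem~\ref{mainthm2} from Corollaries~\ref{cor:dP1}, \ref{cor:dP2}, \ref{cor:dP3} (which are the $n=3$ specializations of Theorems~\ref{thm:dP1}, \ref{thm:dP2}, \ref{thm:dP3}, proved via Lemmas~\ref{lem:dP1ample}, \ref{lem:dP2ample}, \ref{lem:dP3ample}) together with the embedding result from \cite{Puk}. Two small corrections: the list of families is not finite but countably infinite (the twisting parameters are unbounded), though the numerical verification is uniform; and the two exceptional degree~$3$ families $(\theta,\lambda,\mu,\nu)=(1,0,0,0)$ and $(3,1,1,1)$ are singled out in Corollary~\ref{cor:dP3} (via Lemma~\ref{lem:dP3ample}), not in Theorem~\ref{thm:dP3} itself.
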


In the above exceptions, one family is the family of hypersurfaces of bi-degree $(1,3)$ in $\mbP^1 \times \mbP^3$ and the other is the family obtained by blowing-up cubic $3$-folds along a smooth plane cubic curve.
A general member of the former and the latter family is rational and non-rational (\cite{CG}), respectively.

Rationality questions for del fibrations of degree $4$ over $\mbP^1$ are settled in \cite{Alekseev,Shramov} and it is shown in \cite{HT} that a very general del Pezzo fibration over $\mbP^1$ of degree $4$ which is not rational and not birational to a cubic threefold is not stably rational.
It is also well known that del Pezzo fibrations over $\mbP^1$ of degree $\geqslant 5$ are rational. 
Combining these results with Theorem \ref{mainthm2} we get the following.

\begin{Thm}
Let $\pi:X\to \mbP^1$ be a very general del Pezzo fibration. Suppose $X$ is smooth and not birational to a cubic threefold. Then $X$ is either rational or not stably rational.
\end{Thm}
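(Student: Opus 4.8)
The plan is to combine the dichotomy results for del Pezzo fibrations over $\mbP^1$ of every degree $d \ge 1$, separating according to $d$. The key observation is that rationality is birationally invariant and stable rationality implies, via the specialization machinery already invoked in this paper, that a non universally $\CH_0$-trivial degeneration obstructs stable rationality; so it suffices to run through the (finitely many) deformation types and check that each is either rational or caught by one of the cited non-stable-rationality statements. First I would recall the elementary reduction: a very general del Pezzo fibration $\pi : X \to \mbP^1$ has a well-defined degree $d = K_X^3/(\text{generic fibre})$, which for a smooth three-fold takes values in $\{1,2,3,4,5,6,7,8,9\}$, and for $d \ge 5$ the total space is rational by the classical results (Enriques, and in families by Iskovskikh--Manin-type arguments), so those cases are immediately in the ``rational'' alternative.

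Next I would handle $d = 4$. Here one invokes \cite{Alekseev, Shramov} for the rationality classification and \cite{HT} for the stable-rationality statement: a very general degree-$4$ del Pezzo fibration over $\mbP^1$ is either rational, or birational to a cubic threefold (the excluded case in the theorem), or not stably rational. Since we have assumed $X$ is not birational to a cubic threefold, the degree-$4$ case is settled. The main body of the argument is $d \in \{1,2,3\}$, and here I would simply quote Theorem \ref{mainthm2}: a very general $3$-dimensional nonsingular del Pezzo fibration of degree $1,2,3$ is not stably rational \emph{except} for two explicitly described families in degree $3$. One of those exceptional families consists of hypersurfaces of bi-degree $(1,3)$ in $\mbP^1 \times \mbP^3$, whose general member is rational; the other consists of blow-ups of cubic threefolds along a smooth plane cubic, whose general member is birational to a cubic threefold and hence excluded by hypothesis. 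So in every degree the conclusion holds.

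The step I expect to require the most care is matching the hypothesis ``$X$ smooth and not birational to a cubic threefold'' against the precise statements of the cited theorems, since the notion of ``very general'' must be taken uniformly across all the deformation families simultaneously (one takes the countable intersection of the very-general loci appearing in Theorem \ref{mainthm2}, in \cite{HT}, and in the rationality classifications), and one must confirm that the second exceptional degree-$3$ family — blow-ups of cubic threefolds along a plane cubic — is genuinely \emph{birational} to a cubic threefold (the blow-down map), so that excluding cubic-threefold-birational $X$ removes exactly that family and no other. Once that bookkeeping is in place, the theorem follows formally: partition by degree, and in each part either rationality is known or non-stable-rationality has been established, with the two degree-$3$ exceptions absorbed into the ``rational or birational to a cubic threefold'' clause.
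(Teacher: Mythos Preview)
Your proposal is correct and follows exactly the approach the paper takes: the paper does not give a formal proof of this theorem but states it as an immediate consequence of combining Theorem~\ref{mainthm2} (degrees $1,2,3$), the results of \cite{Alekseev,Shramov,HT} (degree $4$), and the classical rationality for degree $\ge 5$, with the two degree-$3$ exceptions handled precisely as you describe. Your additional bookkeeping remarks about taking ``very general'' uniformly and verifying that the second exceptional family is birational to a cubic threefold are appropriate elaborations of what the paper leaves implicit.
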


Thus in dimension $3$, as far as very general members are concerned, the study of stable (non-)rationality is completely settled (modulo cubic $3$-folds) for del Pezzo fibrations.
Earlier Hassett and Tschinkel have proven a similar result for Fano varieties. They have shown that a very general smooth Fano variety which is not birational to a smooth cubic threefolds is either rational or not stably rational \cite{HT} (see also \cite{Okada2} for the similar result for orbifold Fano $3$-fold hypersurfaces).

\begin{Ack}
The second author is partially supported by JSPS KAKENHI Grant Number 26800019.
\end{Ack}

\section{Preliminaries}

\subsection{Weighted projective space bundles}

In this section we assume that the ground field is an algebraically closed field $k$.

\subsubsection{Definition of WPS bundles}
A {\it toric weighted projective space bundle over $\mbP^n$} is a projective simplicial toric variety $P$ with Cox ring
\[
\Cox (P) = k [u_0,\dots,u_n,x_0,\dots,x_m],
\]
which is $\mbZ^2$-graded as
\[
\begin{pmatrix}
1 & \cdots & 1 & \lambda_0 & \cdots & \lambda_m \\
0 & \cdots & 0 & a_0 & \cdots & a_m
\end{pmatrix}
\]
and with the irrelevant ideal $I = (u_0,\dots,u_n) \cap (x_0,\dots,x_m)$, where $\lambda_0,\dots,\lambda_m$ are integers and $n, m, a_0,\dots,a_m$ are positive integers.
In other words, $P$ is the geometric quotient
\[
P = (\mbA^{n+m+2} \setminus V (I))/\mbG_m^2,
\]
where the action of $\mbG_m^2 = \mbG_m \times \mbG_m$ on $\mbA^{n+m+2} = \Spec k [u_0,\dots,u_n,x_0,\dots,x_m]$ is given by the above matrix.
We will simply say that $P$ is the WPS bundle over $\mbP^n$ defined by
\[
\begin{pmatrix}
u_0 & \cdots & u_n & & x_0 & \cdots & x_m \\
1 & \cdots & 1 & | & \lambda_0 & \cdots & \lambda_m \\
0 & \cdots & 0 & | & a_0 & \cdots & a_m
\end{pmatrix}.
\]
There is a natural projection $P \to \mbP^n$, which is the projection by the coordinates $u_0,\dots,u_n$, and its fiber is isomorphic to the weighted projective space $\mbP (a_0,\dots,a_m)$.
We also call $P$ (or $P \to \mbP^n$) a {\it $\mbP (a_0,\dots,a_m)$-bundle over $\mbP^n$}.

Let $\msp \in P$ be a point and let $\msq \in \mbA^{n+m+2} \setminus V (I)$ be a preimage of $\msp$ via the morphism $\mbA^{n+m+2} \setminus V (I) \to P$ and write $\msq = (\alpha_0,\dots,\alpha_n,\beta_0,\dots,\beta_m)$.
In this case we express $\msp \in P$ as $\msp = (\alpha_0\!:\!\cdots\!:\!\alpha_n ; \beta_0\!:\!\cdots\!:\!\beta_n)$.
This is clearly independent of the choice of $\msq$.

We will frequently replace coordinates in order to simplify the expression of a given point $\msp \in P$.
Consider a point $\msp = (\alpha_0\!:\!\cdots\!:\!\alpha_n ; \beta_0\!:\!\cdots\!:\!\beta_n)$ and suppose for example that $\alpha_0 \ne 0$, $\beta_j \ne 0$, $a_j = 1$.
Then for $l \ne j$ such that $\lambda_l/a_l \ge \lambda_j$, the replacement 
\[
x_l \mapsto \alpha_0^{\lambda_l - a_l \lambda_j} \beta_j^{a_l} x_l - \beta_l u_0^{\lambda_l - a_l \lambda_j} x_j^{a_l}
\] 
induces an automorphism of $P$.
By considering this coordinate change, we may assume that the $x_l$-coordinate is zero for $l$ such that $\lambda_l/a_l \ge \lambda_j$.

\subsubsection{Weil divisors}

Let $P$ be a $\mbP (a_0,\dots,a_m)$-bundle as above.
The action of $\mbZ^2$ on $\Cox (P)$ is given in the above matrix.
We have the decomposition
\[
\Cox (P) = \bigoplus_{(\alpha,\beta) \in \mbZ^2} \Cox (P)_{(\alpha,\beta)},
\]
where $\Cox (P)_{(\alpha,\beta)}$ consists of the homogeneous elements of bi-degree $(\alpha,\beta)$.
An element $f \in \Cox (P)_{(\alpha,\beta)}$ is called a (homogeneous) polynomial of bi-degree $(\alpha,\beta)$.

The (Weil) divisor class group $\operatorname{Cl} (P)$ of $P$ is isomorphic to $\mbZ^2$.
Let $F$ and $D$ be the divisors on $P$ corresponding to $(1,0)$ and $(0,1)$, respectively, which form generators of $\operatorname{Cl} (P)$.
Then $F$ is the class of a fiber of the pullback of a hyperplane on $\mbP^n$ and the zero locus $(x_i = 0)$ is linearly equivalent to $\lambda_i F + a_i D$.
We denote by $\mcO_P (\alpha,\beta)$ the rank $1$ reflexive sheaf corresponding to the divisor class of type $(\alpha,\beta)$.
More generally, for a subscheme $Z \subset P$, we set $\mcO_Z (\alpha,\beta) = \mcO_X (\alpha,\beta)|_Z$.
Finally we remark that there is an isomorphism
\[
H^0 (P, \mcO_P (\alpha,\beta)) \cong \Cox (P)_{(\alpha,\beta)}
\]
and that the cone of nef divisors on $P$ is generated by $F$ and $\lambda_l F + a_l D$, where $l$ is such that $\lambda_l/a_l = \max \{ \lambda_j/a_j \mid j = 0,\dots,m\}$.

\subsubsection{Affine charts}

We give a description of standard open affine charts of $P$.
For $i = 0,\dots,n$ and $j = 0,\dots,m$, we define $U_{i,j} = (u_i \ne 0) \cap (x_j \ne 0) \subset P$.
Clearly the $U_{i,j}$ cover $P$.
We only explain an explicit description of $U_{i,j}$ for $j$ such that $a_i = 1$, which is enough for our purpose.
For $k \ne i$ and $l \ne j$, we set 
\[
u_k^{(i,j)} = \frac{u_k}{u_i} \quad \text{and} \quad x_l^{(i,j)} = \frac{u_i^{a_l \lambda_j - \lambda_l} x_l}{x_j^{a_l}},
\]
which are clearly $\mbG_m^2$-invariant rational functions on $P$ which are regular on $U_{i,j}$.
Moreover it is easy to see that $U_{i,j}$ is isomorphic to the affine $(n+m)$-space with affine coordinates $\{u_k^{(i,j)} \mid k \ne i\} \cup \{x_l^{(i,j)} \mid l \ne j\}$.
In the following, we abuse the notation and we identify $u_k^{(i,j)}$ with $u_j$ and $x_l^{(i,j)}$ with $x_l$, and we think of $U_{i,j}$ as affine space with coordinates $\{u_0,\dots,u_n,x_0,\dots,x_m\} \setminus \{u_i,x_j\}$.
Under this terminology, the restriction map
\[
H^0 (P, \mcO_P (\alpha,\beta)) \to H^0 (U_{i,j}, \mcO_{P} (\alpha,\beta)) \cong H^0 (\mbA^{n+m}, \mcO_{\mbA^{n+m}})
\]
can be understood as a homomorphism defined by substituting $u_i = x_j = 1$ in $g (u,x) \in H^0 (P,\mcO_P (\alpha,\beta))$.

\subsection{Universal $\CH_0$-triviality}

For a variety $X$, we denote by $\CH_0 (X)$ the {\it Chow group of $0$-cycles} on $X$.

\begin{Def}
\begin{enumerate}
\item A projective variety $X$ defined over a field $k$ is {\it universally $\CH_0$-trivial} if for any field $F$ containing $k$, the degree map $\CH_0 (X_F) \to \mbZ$ is an isomorphism.
\item A projective morphism $\varphi \colon Y \to X$ defined over a field $k$ is {\it universally $\CH_0$-trivial} if for any field $F$ containing $k$, the push-forward map $\varphi_* \colon \CH_0 (Y_F) \to \CH_0 (X_F)$ is an isomorphism.
\end{enumerate}
\end{Def}

We apply the specialization argument of universal $\CH_0$-triviality in the following form.

\begin{Thm}[{\cite[Th\'eor\`eme 1.14]{CTP}}] \label{thm:spCH0}
Let $A$ be a discrete valuation ring with fraction field $K$ and residue field $k$, with $k$ algebraically closed.
Let $\mcX$ be a flat proper scheme over $A$ with geometrically integral fibers.
Let $X$ be the generic fiber $\mcX \times_A K$ and $Y$ the special fiber $\mcX \times_A k$.
Assume that $Y$ admits a universally $\CH_0$-trivial resolution $\tilde{Y} \to Y$ of singularities.
Let $\overline{K}$ be an algebraic closure of $K$ and assume that the geometric generic fiber $X_{\overline{K}}$ admits a resolution $\tilde{X} \to X_{\overline{K}}$.
If $\tilde{X}$ is universally $\CH_0$-trivial, then so is $\tilde{Y}$.
\end{Thm}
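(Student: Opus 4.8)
\begin{Proof}
The plan is to follow the decomposition-of-the-diagonal method of \cite{CTP}. First I would set up the standard dictionary: a smooth projective variety $V$ over a field $F$ is universally $\CH_0$-trivial if and only if it admits an \emph{integral decomposition of the diagonal}, i.e.\ an equality
\[
[\Delta_V] = [V \times_F \{P\}] + [Z] \quad \text{in } \CH_{\dim V}(V \times_F V),
\]
with $P$ a closed point and $Z$ supported on $V \times_F W$ for some closed $W \subsetneq V$; equivalently, in $\CH_0$ of the generic fibre $V_{F(V)}$ the class of the generic point agrees with that of an $F$-rational zero-cycle of degree $1$. I would also record the permanence properties used below: universal $\CH_0$-triviality of morphisms is stable under composition; a proper morphism all of whose fibres (over the residue fields of all points, and over all their field extensions) are universally $\CH_0$-trivial is itself universally $\CH_0$-trivial; and if $f\colon V' \to V$ is proper with $f$ and $V'$ universally $\CH_0$-trivial then so is $V$. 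By the dictionary it suffices to produce an integral decomposition of the diagonal of $\tilde Y$ over $k$; since $\tilde Y \to Y$ is universally $\CH_0$-trivial, this is equivalent to showing that $Y$ itself is universally $\CH_0$-trivial.

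Next I would carry out the reductions over the base. As $\tilde X$ is universally $\CH_0$-trivial over $\overline K$ it has a decomposition of the diagonal there, and since Chow groups commute with filtered colimits of fields this decomposition, as well as the resolution $\tilde X \to X_{\overline K}$, is already defined over some finite extension of $K$. Let $A'$ be the localisation, at a maximal ideal lying over $\mfm_A$, of the integral closure of $A$ in such a finite extension; it is a discrete valuation ring, with fraction field a finite extension $K'$ of $K$, and with residue field $k$ since $k$ is algebraically closed. Replacing $A$ by $A'$ and $\mcX$ by $\mcX \times_A A'$ replaces $X$ by $X_{K'}$ while leaving $Y$ and the resolution $\tilde Y \to Y$ unchanged; renaming, we may and do assume from now on that $\tilde X \to X$ is a resolution \emph{over $K$} and that $\tilde X$ admits a decomposition of the diagonal over $K$. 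Write $d := \dim X$.

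The geometric core is a model over $A$. By Nagata compactification and flattening, choose an integral scheme $\mathcal{V}$, flat and proper over $A$ of relative dimension $d$, with generic fibre $\mathcal{V}_K = \tilde X$ and carrying a proper morphism $h\colon \mathcal{V} \to \mcX$ extending the resolution $\tilde X \to X$; then $h$ is birational, the special fibre $\mathcal{V}_k$ is purely $d$-dimensional, and $\mathcal{V}_k \to Y$ is surjective since $h$ is. I would then invoke the specialisation homomorphism for Chow groups of schemes flat and proper over the discrete valuation ring $A$: because the closed fibre is the effective Cartier divisor $\operatorname{div}(\pi)$ for a uniformiser $\pi$, Fulton's construction produces maps $\sigma\colon \CH_*(\mathcal{W}_K) \to \CH_*(\mathcal{W}_k)$ requiring \emph{no} regularity of $\mathcal{W}$ and compatible with proper push-forward, flat pull-back, and exterior products. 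Taking $\mathcal{W} = \mathcal{V} \times_A \mathcal{V}$ and applying $\sigma$ to the decomposition of the diagonal of $\tilde X$ transports it to an equality of cycle classes on $\mathcal{V}_k \times_k \mathcal{V}_k$ expressing $[\Delta_{\mathcal{V}_k}]$ as a product cycle plus a cycle supported over a proper closed subset of $\mathcal{V}_k$. It then remains to transfer this ``decomposition'' across the maps $\mathcal{V}_k \to Y$ and $\tilde Y \to Y$: one isolates a component of $(\mathcal{V}_k)_{\mathrm{red}}$ dominating $Y$, compares zero-cycles over the common function field $k(Y) = k(\tilde Y)$, and uses that $\tilde Y \to Y$ is universally $\CH_0$-trivial, together with the permanence properties, to deduce that $Y$ is universally $\CH_0$-trivial and hence that $\tilde Y$ is.

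I expect the genuine obstacle to lie precisely in this last transfer. The total space $\mathcal{V}$ of the model is typically singular and its special fibre $\mathcal{V}_k$ typically singular and possibly non-reduced or reducible, so one can neither specialise rational equivalences along a regular family nor identify $\mathcal{V}_k$ with the given resolution $\tilde Y$. Handling this needs both (i) a careful verification that Fulton's specialisation map is defined and suitably multiplicative in this non-regular setting, and (ii) a comparison of zero-cycles on $\mathcal{V}_k$ and on $\tilde Y$ over $k(Y)$ — and it is exactly at this second point that the hypothesis that $Y$ admits a \emph{universally} $\CH_0$-trivial resolution $\tilde Y \to Y$ is indispensable: without it the specialised decomposition would control only the singular $\mathcal{V}_k$ rather than a smooth model.
\end{Proof}
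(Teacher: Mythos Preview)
The paper does not give its own proof of this theorem: it is quoted verbatim as \cite[Th\'eor\`eme~1.14]{CTP} and used as a black box, so there is no in-paper argument to compare against. Your sketch follows the standard Colliot-Th\'el\`ene--Pirutka strategy (diagonal decomposition, base change to a finite extension so that the resolution and the decomposition are defined over $K$, Fulton specialisation, then transfer via the universally $\CH_0$-trivial resolution of the special fibre), which is indeed the route taken in the cited source; the difficulty you flag in the final transfer step is exactly where the work in \cite{CTP} lies.
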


In our applications of Theorem \ref{thm:spCH0}, the variety $X_{\overline{K}}$ is  always smooth.

\begin{Lem}[{\cite[Lemma 2.2]{Totaro}}] \label{lem:totaro}
Let $X$ be a smooth projective variety over a field.
If $H^0 (X,\Omega_X^i) \ne 0$ for some $i > 0$, then $X$ is not universally $\CH_0$-trivial.
\end{Lem}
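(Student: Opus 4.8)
The plan is to prove the contrapositive: assuming $X$ is universally $\CH_0$-trivial, show that $H^0(X,\Omega_X^i)=0$ for every $i>0$. The engine is the decomposition of the diagonal. I would apply universal $\CH_0$-triviality over the function field $F=k(X)$: the class of the canonical $F$-point of $X_F$ (i.e.\ the generic point) and the class of $z_0\otimes_k F$, for a zero-cycle $z_0$ on $X$ of degree $1$, both have degree $1$ and hence coincide in $\CH_0(X_F)\cong\mbZ$. Spreading this rational equivalence out over a dense open $U\subseteq X$ and applying the localization sequence with $D:=X\setminus U$, one obtains in $\CH_n(X\times X)$, where $n=\dim X$, a decomposition
\[
[\Delta_X]=[z_0\times X]+[Z],
\]
with $Z$ a cycle supported on $X\times D$ and $D\subsetneq X$ a proper closed subset (the Bloch--Srinivas decomposition of the diagonal).

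Next I would invoke the action of correspondences on cohomology. A cycle $\Gamma\in\CH_n(X\times X)$ acts on the Hodge cohomology $\bigoplus_{p,q}H^q(X,\Omega_X^p)$ by $\Gamma_*\alpha=p_{2*}\!\left(p_1^*\alpha\cup[\Gamma]\right)$, using flat pullback, the product on Hodge cohomology, the cycle class map, and the Gysin maps attached to the smooth projective projections $X\times X\to X$; this action depends only on the rational equivalence class, and $[\Delta_X]_*$ is the identity. For the product term, $p_1^*\alpha$ restricts to $0$ on each component $\{a\}\times X$ of $z_0\times X$ whenever $i>0$ (a positive-degree form pulled back through a point), so $[z_0\times X]_*=0$ on $H^0(X,\Omega_X^i)$. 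Hence for $\alpha\in H^0(X,\Omega_X^i)$ with $i>0$ we get $\alpha=[Z]_*\alpha$, and it remains to show $[Z]_*\alpha=0$.

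To handle the error term, choose a resolution $\tilde D\to D$ and a cycle $\tilde Z\in\CH_n(X\times\tilde D)$ pushing forward to $Z$ under $X\times\tilde D\to X\times X$; writing $\nu\colon\tilde D\to D\hookrightarrow X$, the projection formula gives $[Z]_*\alpha=\nu_*\!\left([\tilde Z]_*\alpha\right)$. Now $[\tilde Z]$ has codimension $\dim D$ in $X\times\tilde D$, so its class lies in $H^{\dim D}(X\times\tilde D,\Omega^{\dim D})$; cupping with $p_X^*\alpha\in H^0(X\times\tilde D,\Omega^i)$ and applying the Gysin map of the smooth projective morphism $X\times\tilde D\to\tilde D$ of relative dimension $n$, the class $[\tilde Z]_*\alpha$ lands in $H^{\dim D-n}(\tilde D,\Omega_{\tilde D}^{\,i+\dim D-n})$. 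Since $\dim D<n$ this is cohomology in negative degree, hence zero; so $[\tilde Z]_*\alpha=0$, whence $\alpha=[Z]_*\alpha=0$, and $H^0(X,\Omega_X^i)=0$ for all $i>0$.

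The step I expect to be the main obstacle is making this last vanishing airtight: one must carefully set up the correspondence action on Hodge (or de Rham) cohomology together with the projection formula and the bidegree bookkeeping, and, in positive characteristic, replace the resolution $\tilde D\to D$ by an alteration of some degree $\delta$, which only yields $\delta\cdot[Z]_*\alpha=0$, so a little extra care is needed to conclude (using e.g.\ that $H^0(X,\Omega_X^i)$ is a vector space). Granting this, $\mathrm{id}=[\Delta_X]_*$ acts as $0$ on $H^0(X,\Omega_X^i)$ for every $i>0$, which is the contrapositive of the statement.
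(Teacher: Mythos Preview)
The paper does not supply its own proof of this lemma; it simply quotes the result from \cite{Totaro}. Your overall strategy---obtain an integral decomposition of the diagonal from universal $\CH_0$-triviality over $k(X)$ and let it act on Hodge cohomology---is the standard one and is essentially Totaro's argument. The computations for $[\Delta_X]_*$ and $[z_0\times X]_*$ are correct.

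The gap is in your treatment of $[Z]_*\alpha$ in positive characteristic, which is exactly the case this paper needs (the lemma is applied to resolutions $Y$ over algebraically closed fields of characteristic $2$ and $3$). Your proposed fix---pass to an alteration $\tilde D\to D$ of some degree $\delta$ and then conclude from $\delta\cdot[Z]_*\alpha=0$ ``using that $H^0(X,\Omega_X^i)$ is a vector space''---does not work when $p=\chara(k)$ divides $\delta$: in that case $\delta$ acts as $0$ on any $k$-vector space, so the equation $\delta\cdot[Z]_*\alpha=0$ is vacuous. Neither de Jong's theorem nor Gabber's $\ell'$-alterations lets you arrange $p\nmid\delta$.

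The repair is in fact simpler than your resolution argument and avoids $\tilde D$ altogether. Since $Z$ is supported on $X\times D$, its cycle class restricts to $0$ on $X\times U$, where $U=X\setminus D$. Flat base change for the smooth proper projection $p_2$ along the open immersion $U\hookrightarrow X$ then gives
\[
([Z]_*\alpha)\big|_U \;=\; p'_{2*}\!\left((p_1^*\alpha\cup[Z])\big|_{X\times U}\right) \;=\; 0
\]
in $H^0(U,\Omega_U^i)$. As $X$ is smooth and integral and $\Omega_X^i$ is locally free, the restriction $H^0(X,\Omega_X^i)\to H^0(U,\Omega_U^i)$ is injective, hence $[Z]_*\alpha=0$. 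This works uniformly in every characteristic and sidesteps both the alteration-degree issue and the question of whether $Z$ lifts integrally to $X\times\tilde D$.
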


\subsection{Cyclic covers in positive characteristic} \label{sec:cyccov}

We briefly recall Koll\'ar's technique of constructing a suitable invertible sheaf on cyclic covers in positive characteristics.

Let $Z$ be a smooth variety of dimension $n$ over an algebraically closed field of characteristic $p > 0$, $\mcL$ an invertible sheaf on $Z$ and $s \in H^0 (Z,\mcL^p)$ a global section.
Let $\tau \colon X \to Z$ be the cyclic cover of degree $p$ branched along the zero locus $(s = 0) \subset Z$.
By \cite[V.5]{Kollar}, there exists an invertible sheaf $\mcQ$ on $Z$ such that $\tau^*\mcQ \subset (\Omega_X^{n-1})^{\vee \vee}$, where $\vee \vee$ denotes the double dual.
We set $\mcM := \tau^*\mcQ$ and call it the {\it invertible sheaf associated with the covering $\tau$}.   
Note that if the branched divisor $(s = 0)$ is reduced, then $\mcQ \cong \omega_Z \otimes \mcL^p$ by \cite[Lemma V.5.9]{Kollar}.

We recall the definition of critical point of $s \in H^0 (Z, \mcL^p)$ which plays an important role in the analysis of singularities of $X$.
Let $\msp \in Z$ be a point and let $x_1,\dots,x_n$ be local coordinates of $Z$ at $\msp$.
Take a local generator $\mu$ of $\mcL$ at $\msp$ and write $s = f \mu^p$, where $f = f (x_1,\dots,x_n)$.
We write $f = f_0 + f_1 + \cdots$, where $f_i$ is homogeneous of degree $i$ and we set
\[
H (s) = \left( \frac{\prt^2 f}{\prt x_i \prt x_j} \right).
\]

\begin{Def}
We say that $s$ has a {\it critical point} at $\msp$ if $f_1 = 0$.
Suppose that $s$ has a critical point at $\msp$.
We say that $s$ has a {\it nondegenerate critical point} at $\msp$ if $\rank H (s) (\msp) = n$.
When $p = 2$ and $n$ is odd, we always have $\rank H (s) (\msp) < n$.
In this case, we say that $s$ has an {\it almost nondegenerate critical point} at $\msp$ if 
\[
\operatorname{length} \mcO_{Z,\msp}/(\prt f/\prt x_1,\dots,\prt f/\prt x_n) = 2.
\]
\end{Def}

The above definition does not depend on the choice of local coordinates $x_1,\dots,x_n$ and the local generator $\mu$ of $\mcL$ (see \cite[Section V.5]{Kollar}).

\begin{Rem} \label{rem:critunit}
Let $s = f \mu^p$ be as above and $a \in \mcO_{Z,\msp}$ an invertible element.
We write $a = a (x_1,\dots,x_n) = a_0 + a_1 + \cdots$ as before.
We think of $a^p s$ as a section around $\msp$ and compare critical points of $s$ and $a^p s$ at $\msp$.
It is obvious that $s$ has a critical point at $\msp$ if and only if so does $a^p s$ since the linear term of $a f$ is $a_0 f_1$ and $a_0 \ne 0$ (Here recall that that the ground field is of characteristic $p$).
Now suppose that $s$ and $a^p s$ has a critical point at $\msp$.
Then it is also easy to see that $s$ has a nondegenerate critical point (resp.\ (almost) nondegenerate critical point) at $\msp$ if and only if so does $a^p s$ since $H (s) = a^p H (s)$ and 
\[
\mcO_{Z,\msp}/(\prt (a^p f)/\prt x_1,\dots,\prt (a^p f)/\prt x_n)
= \mcO_{Z,\msp}/(\prt f/\prt x_1,\dots,\prt f/\prt x_n).
\]
This observation will be used later.
\end{Rem}

\begin{Rem}
We explain an explicit description of (almost) nondegenerate critical point.
We refer readers to \cite[Section V.5]{Kollar} for details.
Suppose that $s = f \mu^p$ has a critical point at $\msp \in Z$.
\begin{enumerate}
\item If either $p > 2$ or $p = 2$ and $n$ is odd, then $s$ has a nondegenerate critical point if and only if, in a suitable choice of $x_1,\dots,x_n$, $f_1 = 0$ and
\[
f_2 =
\begin{cases}
x_1 x_2 + x_3 x_4 + \cdots + x_{n-1} x_n, & \text{if $n$ is even}, \\
x_1^2 + x_2 x_3 + \cdots + x_{n-1} x_n, & \text{if $n$ is odd}.
\end{cases}
\]
\item If $p = 2$ and $n$ is odd, then $s$ has an almost nondegenerate critical point at $\msp$ if and only if, in a suitable choice of $x_1,\dots,x_n$, $f_1 = 0$,
\[
f_2 = \alpha x_1^2 + x_2 x_3 + \cdots + x_{n-1} x_n,
\]
for some constant $\alpha$ and the coefficient of $x_1^3$ in $f_3$ is non-zero.
\end{enumerate}
\end{Rem}

The following result is important.

\begin{Prop}[{\cite[Proposition 4.1]{Okada}, cf.\ \cite{CTP2}, \cite[Proposition 7.8]{CL}}] \label{prop:existresol}
Let the notation and assumption as above.
If $s$ has only $($almost$)$ nondegenerate critical points on $Z$, then there exists a universally $\CH_0$-trivial resolution $\varphi \colon Y \to X$ of singularities such that $\varphi^*\mcM \inj \Omega_Y^{n-1}$.
\end{Prop}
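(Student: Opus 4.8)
The plan is to reduce the statement to a local study around the singular points of $X$, resolve each of them by an explicit sequence of blow-ups, and then verify the two required properties of the resulting $\varphi$ chart by chart. Since $\chara k = p$, the cover $\tau\colon X\to Z$ is purely inseparable of degree $p$, hence a bijective universal homeomorphism; in a chart where $\mcL$ is trivialized and $s=f\mu^p$ it reads $\{y^p=f(x_1,\dots,x_n)\}\subset\mbA^{n+1}$, whose singular locus is $\{\prt f/\prt x_1=\dots=\prt f/\prt x_n=0\}$. Thus $\Sing X$ lies over the critical points of $s$, and near an (almost) nondegenerate one the Jacobian analysis shows $\Sing X$ is a single reduced point, so $\Sing X$ is a finite set of isolated points. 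By \cite[V.5]{Kollar}, after an \'etale change of coordinates $X$ is, near such a point, isomorphic to $\{y^p=q(x)\}$ with $q$ the standard nondegenerate quadratic form (using the Morse lemma when $p>2$, and directly when $p=2$ and $n$ is even) or to $\{y^2=x_2x_3+\dots+x_{n-1}x_n+c\,x_1^3+(\mathrm{h.o.t.})\}$ with $c\ne0$ (when $p=2$ and $n$ is odd); moreover, with $\mcL$ trivialized, a local generator of $\mcM$ is carried by $\mcM\inj(\Omega^{n-1}_X)^{\vee\vee}$ to the $(n-1)$-form $\omega$ which on $\{\prt_jf\ne0\}$ equals $\pm(\prt_jf)^{-1}\,dx_1\wedge\dots\wedge\widehat{dx_j}\wedge\dots\wedge dx_n$, these expressions patching over $X_{\mathrm{sm}}$ since $\sum_i(\prt_if)\,dx_i=df=0$ in $\Omega^1_X$. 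It therefore suffices to construct, near each singular point, a resolution with the two asserted properties and glue it with the identity over $X_{\mathrm{sm}}$.

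Next I would blow up each singular point. A routine chart-by-chart computation, using the nondegeneracy of $q$ (and $c\ne0$ in the almost nondegenerate case), shows the strict transform is smooth; when $p\ge5$ the ``$y$-pivot'' chart carries a milder model $\{y^{p-2}=q(x')\}$ of the same type, so one blows up again and iterates, while for $p\in\{2,3\}$ a single blow-up resolves. The fibre of $\varphi$ over a singular point is then the projectivized tangent cone at that point; a direct inspection shows it is a quadric hypersurface in $\mbP^n$ that is either smooth or a cone over a smooth quadric (an iterated such cone if several blow-ups were needed).

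To see that $\varphi$ is universally $\CH_0$-trivial I would invoke the criterion (\cite{CTP}) that a proper morphism is universally $\CH_0$-trivial once each of its fibres over a scheme-theoretic point is a universally $\CH_0$-trivial variety. Over $X_{\mathrm{sm}}$ the fibres of $\varphi$ are reduced points, and over the finitely many singular points the fibre is the quadric or quadric cone just described. A smooth quadric over $k=\bar k$ is rational, hence universally $\CH_0$-trivial, and a cone over a universally $\CH_0$-trivial variety $V$ is again universally $\CH_0$-trivial, being the image under the contraction of a section of a $\mbP^1$-bundle over $V$, which yields a universally $\CH_0$-trivial morphism from that $\mbP^1$-bundle; iterated cones, and the degenerate low-dimensional cases in which the quadric degenerates to a chain of projective spaces, are handled the same way by a gluing argument. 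Hence $\varphi$ is universally $\CH_0$-trivial.

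The hard part will be $\varphi^*\mcM\inj\Omega^{n-1}_Y$. Since $\varphi$ is an isomorphism over $X_{\mathrm{sm}}$, where $\mcM\subset\Omega^{n-1}_X$ already, everything reduces to showing that the rational $(n-1)$-form $\varphi^*\omega$ has no pole along any exceptional divisor $E$. This is a direct but delicate local computation: substituting the blow-up coordinates into the local expression of $\omega$ and simplifying, the apparently polar terms cancel exactly because $q$ is nondegenerate --- with $c\ne0$ supplying the needed cancellation in the almost nondegenerate case --- and one in fact finds that $\varphi^*\omega$ vanishes along $E$. For instance, for $\{y^3=x_1x_2+x_3x_4\}$, in the chart $x_i=y\,\tilde x_i$ the pullback of $x_2^{-1}\,dx_2\wedge dx_3\wedge dx_4$ reduces, using $y=\tilde x_1\tilde x_2+\tilde x_3\tilde x_4$, to $y$ times a regular $3$-form, so it vanishes along $E=\{y=0\}$. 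This is precisely where the definition of (almost) nondegenerate critical point is calibrated to work; carrying out this verification in all charts and for all the normal forms above completes the proof. The details may be found in \cite[Proposition 4.1]{Okada} and \cite[Proposition 7.8]{CL} (cf.\ \cite{CTP2}).
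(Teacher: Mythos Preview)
The paper does not prove this proposition itself; it is imported by citation from \cite[Proposition 4.1]{Okada} (cf.\ \cite{CTP2}, \cite[Proposition 7.8]{CL}), with no argument given beyond the reference. Your sketch faithfully reproduces the strategy of those sources --- localize at the isolated singular points, put them in Koll\'ar's normal form, resolve by blowing up (once for $p\in\{2,3\}$, iteratively for larger $p$), identify the exceptional fibres as (cones over) quadrics and invoke the fibrewise criterion of \cite[Proposition 1.8]{CTP} for universal $\CH_0$-triviality, and finally check by direct coordinate computation that $\varphi^*\omega$ acquires no pole along the exceptional divisors --- so there is nothing further to compare against in this paper.
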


\subsection{Outline of the proof of Main Theorems} \label{sec:outline}

We explain an outline of the proof of main theorems.

We first explain an outline for del Pezzo fibrations of degree $1$ in detail.
We consider a complete linear system $|\mcO_X (6 \mu,6)|$ on a $\mbP (1,1,2,3)$-bundle $P$ over $\mbP^{n-2}$ defined by
\[
\begin{pmatrix}
u_0 & \cdots & u_{n-2} & & x & y & z & w \\
1 & \cdots & 1 & | & 0 & \lambda & 2 \mu & 3 \mu \\
0 & \cdots & 0 & | & 1 & 1 & 2 & 3
\end{pmatrix},
\]
where $n \ge 3$, $\lambda, \mu$ are integers.
Over $\mbC$, if $X \in |\mcO_P (6 \mu,6)|$ is a general member, then $\pi \colon X \to \mbP^{n-2}$, the restriction of the projection $P \to \mbP^{n-2}$, is a nonsingular del Pezzo fibration of degree $1$ under some numerical conditions on $\lambda,\mu$ (which will be considered later on).
We consider a member $X \in |\mcO_P (6 \mu,6)|$ defined by an equation
\[
w^2 + f (u,x,y,z) = 0,
\]
where $f (u,x,y,z)$ is a general polynomial in variables of bi-degree $(6 \mu,6)$.
Let $Z$ be the WPS bundle over $\mbP^{n-2}$ defined by
\[
\begin{pmatrix}
u_0 & \cdots & u_{n-2} & & x & y & z \\
1 & \cdots & 1 & | & 0 & \lambda & 2 \mu \\
0 & \cdots & 0 & | & 1 & 1 & 2
\end{pmatrix}.
\]
Then the natural morphism $\tau \colon X \to Z$ is a double cover branched along the divisor $(f = 0) \subset Z$.
We see that the nonsingular locus of $Z$ is the set $Z^{\circ} = Z \setminus (x = y = 0)$.

Now we assume that the ground field is an algebraically closed field $\K$ of characteristic $2$.
Set $\mcL = \mcO_Z (3 \mu,3)$ and $\mcL^{\circ} = \mcL|_{Z^{\circ}}$.
Then we can apply the techniques of Koll\'ar summarized in Section \ref{sec:cyccov} for $X^{\circ} \to Z^{\circ}$, where $X^{\circ} = \tau^{-1} (Z^{\circ})$, and there exists a line bundle $\mcM^{\circ} \inj (\Omega_{X^{\circ}}^{n-1})^{\vee \vee}$ such that $\mcM^{\circ} \cong \tau^* (\omega_{Z^{\circ}} \otimes {\mcL^{\circ}}^2)$.
Let $\mcM \inj (\Omega_X^{n-2})^{\vee \vee}$ be the pushforward of $\mcM^{\circ}$ via the inclusion $X^{\circ} \inj X$. 
We prove the following.

\begin{enumerate}
\item $X$ is nonsingular along $X \setminus X^{\circ}$. 
\item There exists a universally $\CH_0$-trivial resolution $\varphi \colon Y \to X$ of singularities of $X$ such that $\varphi^*\mcM \inj \Omega_Y^{n-1}$.
\item Under some conditions (e.g.\ $n = 3$ or $-K_X$ is not ample), $H^0 (X,\mcM) \ne 0$.
\end{enumerate}

Note that (1) in particular implies that $\mcM$ is an invertible sheaf.
By (2), (3) and Lemma \ref{lem:totaro}, $Y$ is not universally $\CH_0$-trivial.
Then Theorems \ref{mainthm1}, \ref{mainthm2} (for degree $1$ case) and their stronger version Theorem \ref{thm:dP1} will follow from the specialization Theorem \ref{thm:spCH0} of universal $\CH_0$-triviality.

Outlines of the proofs for degree $2$ and $3$ cases are similar and we give brief explanations.
For del Pezzo fibrations of degree $2$ or $3$, we consider a complete linear system $|\mcO_P (\delta,d)|$ on a WPS bundle over $\mbP^{n-2}$ defined by
\[
\begin{pmatrix}
u_0 & \cdots & u_{n-2} & & x & y & z & w \\
1 & \cdots & 1 & | & 0 & \lambda & \mu & \nu \\
0 & \cdots & 0 & | & 1 & 1 & 1 & m
\end{pmatrix},
\]
where $n \ge 3$, $d, \delta, \lambda,\mu,\nu$ and $m$ are integers such that $(d, m) = (4,2)$ and $(3,1)$.
Over $\mbC$, if $X \in |\mcO_X (\delta,d)|$ is a general member and $m = 2$ (resp.\ $m = 1$), then $\pi \colon X \to \mbP^{n-2}$ is a nonsingular del Pezzo fibration of degree $2$ (resp.\ $3$).
Over an algebraically closed field $\K$ of characteristic $p \in \{2,3\}$, we consider a member $X \in |\mcO_P (\delta,d)|$ admitting a morphism $\pi \colon X \to Z$ which is a branched covering of degree $p$ over a normal projective variety $Z$.
As in the degree $1$ case, let $Z^{\circ}$ be the nonsingular locus of $Z$ and $X^{\circ} = \tau^{-1} (Z^{\circ})$.
Then, corresponding to the covering $X^{\circ} \to Z^{\circ}$, there is an associated line bundle $\mcM^{\circ} \subset (\Omega_{X^{\circ}}^{n-1})^{\vee \vee}$.
Then the main part of the rest of this paper is to prove (1), (2) and (3) above, which will complete the proofs of Theorems \ref{mainthm1}, \ref{mainthm2} and their stronger versions Theorems \ref{thm:dP2}, \ref{thm:dP3}.

The proofs of (1) and (3) are straightforward, hence the proof of (2) is the central part of this paper.
In most of the cases, we can prove that the branched divisor of the covering $\tau \colon X \to Z$ has only (almost) nondegenerate critical points following the arguments similar to \cite[V.5]{Kollar}, which proves (2) by Proposition \ref{prop:existresol}.
Note that the above singularities are isolated.
However, we encounter the case when the branched divisor has critical points along a positive dimensional subvariety of $Z$, which are evidently not (almost) nondegenerate.
In the next section, we devote ourselves to give some preliminary results in order to overcome this difficulty.

\section{Some results on singularities and critical points}

\subsection{Lifting of differential forms}

Let $\msp \in Z$ be a germ of a nonsingular variety of dimension $n \ge 4$ defined over an algebraically closed field $\K$ of characteristic $p \in \{2,3\}$, $\mcL$ an invertible sheaf on $Z$ and $f \in H^0 (Z, \mcL^p)$.
We denote by $\Crit (f) \subset Z$ the set of critical points of $f$.
Let $\tau \colon X \to Z$ be the degree $p$ cover branched along $(f = 0) \subset Z$.

We consider the following conditions on $f$.

\begin{Cond} \label{cd:cr1}
There exist local coordinates $x_1,\dots,x_n$ of $Z$ with the origin at $\msp$ satisfying the following properties:
\begin{enumerate}
\item $f = \alpha + \beta x_1^2 + x_2 x_3 + \gamma x_1^3 + g$, where $\alpha,\beta,\gamma \in \K$, $g = g (x_1,\dots,x_n)$ is contained in the ideal $(x_1,x_2,x_3)^3$ and the coefficient of $x_1^3$ in $g$ is zero.
Moreover if $p = 2$, then $\gamma \ne 0$, and if $p = 3$, then $\beta \ne 0$.
\item $\Crit (f) = (x_1 = x_2 = x_3 = 0) \subset Z$.
\end{enumerate}
\end{Cond}

\begin{Cond} \label{cd:cr2}
There exist local coordinates $x_1,\dots,x_n$ of $Z$ with the origin at $\msp$ satisfying the following properties:
\begin{enumerate}
\item $f = \alpha + x_1 x_2 + x_3 x_4 + g$, where $\alpha \in \K$ and $g = g (x_1,\dots,x_n) \in (x_1,x_3)^2$.
\item $\Crit (f) = (x_1 = x_2 = x_3 = x_4 = 0) \subset Z$.
\end{enumerate}
\end{Cond}

Let $\mcM = \tau^* \mcQ \subset (\Omega_X^{n-1})^{\vee \vee}$ be the invertible sheaf associated with $\tau$ and let $\eta$ be a (rational) $(n-1)$-form which is a generator of $\mcM$.
Let $\tau^{-1} (\Crit (f))$ be the inverse image with the reduced induced scheme structure.
If $f$ satisfies one of the above conditions, then $\tau^{-1} (\Crit (f))$ is a nonsingular subvariety of $X$.
Let $\psi \colon Y \to X$ be the blowup of $X$ along $\tau^{-1} (\Crit (f))$ and $E$ its exceptional divisor.

\begin{Lem} \label{lem:liftM}
Suppose that $f$ satisfies either \emph{Condition \ref{cd:cr1}} or \emph{\ref{cd:cr2}}.
Then $\varphi \colon Y \to X$ is a resolution of singularities of $X$ and $\varphi^*\mcM \inj \Omega_Y^{n-1}$.
\end{Lem}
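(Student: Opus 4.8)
The plan is to reduce the whole statement to an explicit computation in the standard charts of the blow-up $\varphi\colon Y\to X$, carried out chart by chart. Since the assertion is local around $\msp$, I would work in an affine chart of $Z$ on which $\mcL$ is trivialised; there $X$ is the hypersurface $\{w^p - f = 0\}\subset \mbA^{n+1}$ with coordinates $w, x_1,\dots,x_n$, and $\tau$ is the projection to $\mbA^n_x$. As $\prt(w^p-f)/\prt w = 0$ in characteristic $p$ and $\prt(w^p-f)/\prt x_i = -\prt f/\prt x_i$, we get $\Sing(X) = \Crit(f)$ (the $w$-coordinate being determined by $w^p = f$), which by condition $(2)$ of \ref{cd:cr1} or \ref{cd:cr2} is exactly the coordinate subspace $(x_1 = \cdots = x_k = 0)$ with $k=3$, resp.\ $k=4$. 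Since $f$ is constant, equal to $\alpha$, along this subspace, replacing $w$ by $w-\alpha^{1/p}$ (legitimate since $\K$ is algebraically closed of characteristic $p$) we may assume $\alpha = 0$; then $\tau^{-1}(\Crit(f)) = (w = x_1 = \cdots = x_k = 0)$ is a coordinate subspace and $X$ is smooth away from it.

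First I would show that $Y$ is smooth. Because $\varphi$ restricts to an isomorphism over $X\setminus \tau^{-1}(\Crit(f))$, which is smooth, it suffices to verify smoothness of $Y$ along the exceptional divisor $E$. Covering $\Bl_{(w,x_1,\dots,x_k)}\mbA^{n+1}$ by the standard affine charts indexed by the pivot $v\in\{w,x_1,\dots,x_k\}$, with $\tilde y := y/v$, one substitutes $x_\ell = v\tilde x_\ell$, $w = v\tilde w$ into $w^p - f$ and divides by the highest power of $v$ to obtain the local equation of $Y$. Using the explicit shape of $f$, one checks in each chart that this equation either expresses one coordinate as a regular function of the rest — so that chart of $Y$ is an affine space — or, restricted to $E = (v=0)$, defines a smooth subvariety; here one uses that the summand of $f$ lying in $(x_1,x_3)^2$, resp.\ $(x_1,x_2,x_3)^3$, becomes divisible by a power of $v$ after substitution, that $\prt(x_1^2)/\prt x_1 = 0$ when $p = 2$, and the hypotheses $\gamma\ne 0$ (for $p=2$) resp.\ $\beta\ne 0$ (for $p=3$) in Condition \ref{cd:cr1}; any remaining candidate singular point of $Y$ lies off $E$ and would map to the smooth locus of $X$, a contradiction. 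Hence $Y$ is smooth and $\varphi$ is a resolution. This runs parallel to \cite[V.5]{Kollar}, where one blow-up resolves an isolated nondegenerate critical point; the new feature is only that here the critical locus is positive-dimensional and is blown up all at once.

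To prove $\varphi^*\mcM \inj \Omega_Y^{n-1}$, recall from Section \ref{sec:cyccov} and \cite[V.5]{Kollar} that, in the chosen trivialisation, $\mcM$ is generated by the rational $(n-1)$-form
\[
\eta \ = \ \frac{dx_2 \wedge dx_3 \wedge \cdots \wedge dx_n}{\prt f/\prt x_1} \ = \ \frac{(-1)^{i-1}\,dx_1 \wedge \cdots \wedge \widehat{dx_i} \wedge \cdots \wedge dx_n}{\prt f/\prt x_i},
\]
the equalities for different $i$ holding because $\sum_i(\prt f/\prt x_i)\,dx_i = 0$ in $\Omega_X^1$; moreover $\eta$ is a nowhere-vanishing section of $\mcM$ on $X$ (on $X\setminus\tau^{-1}(\Crit(f))$ it is a regular, nowhere-zero $(n-1)$-form generating the line bundle $\mcM\subset(\Omega_X^{n-1})^{\dd}$, and since $\Crit(f)$ has codimension $\ge 2$ this extends to a generator over all of $X$). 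Thus $\varphi^*\eta$ is regular and nowhere zero on $Y\setminus E$, and — $Y$ being smooth, with regular $(n-1)$-forms extending across closed subsets of codimension $\ge 2$ — it is enough to show that in each chart, at each point of $Y\cap E$ off a subset of codimension $\ge 2$, one of the displayed expressions pulls back to a form regular there. In the chart with pivot $v$ one has $dx_\ell = v\,d\tilde x_\ell + \tilde x_\ell\,dv$, so $dx_1 \wedge \cdots \wedge \widehat{dx_i} \wedge \cdots \wedge dx_n$ becomes a positive power of $v$ times a regular $(n-1)$-form, while $\prt f/\prt x_i$ becomes a power of $v$ times a regular function; choosing $i$ so that $x_i$ occurs linearly in the $v$-direction (for Condition \ref{cd:cr2}, the index paired with the pivot in $x_1x_2 + x_3x_4$; for Condition \ref{cd:cr1} the analogous choice in $x_2x_3$), the former power is at least the latter, so the pullback has no pole along $E$, and the residual function in the denominator is a unit along $E$ outside a locus that, as $i$ varies, has empty or codimension $\ge 2$ intersection in $Y$. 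Therefore $\varphi^*\eta$ extends to a regular $(n-1)$-form on $Y$; since $\varphi^*\mcM$ is a line bundle, $\Omega_Y^{n-1}$ is torsion-free, and the resulting morphism is an isomorphism over $X\setminus\tau^{-1}(\Crit(f))$, it is the required injection.

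The easy parts are the reduction (shift of $w$, isomorphism over the smooth locus). The main obstacle is the bookkeeping in the last paragraph: for each of the finitely many charts of the two blow-ups, and for $p=2$ and $p=3$ separately, one must compute the exact power of the pivot dividing numerator and denominator of $\varphi^*\eta$ and choose, at each point of $E$, an expression with no pole there, the codimension estimate being what glues these local choices together. The smoothness computation under Condition \ref{cd:cr1}, where the singularity of $X$ is not of corank one, is the other delicate point, and is exactly where the hypotheses on the cubic coefficient $\gamma$ (resp.\ $\beta$) come in.
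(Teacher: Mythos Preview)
Your proposal is correct and follows essentially the same strategy as the paper: compute the blow-up in charts to verify smoothness, then pull back the explicit generator $\eta$ of $\mcM$ and check it acquires no pole along $E$. The paper is considerably terser---it omits the smoothness verification entirely (``straightforward'') and, for the injection, computes in a single chart (the $x_2$-chart under Condition~\ref{cd:cr1}), which already suffices because $E$ is irreducible, so the order of pole of $\varphi^*\eta$ along $E$ can be read off at its generic point; your chart-by-chart check together with the Hartogs-type codimension-$2$ gluing is valid but more work than necessary.
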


\begin{proof}
We need to check that $Y$ is nonsingular along $\varphi^{-1} (\msp)$.
This is straightforward and we omit the proof.
We prove the latter part assuming Condition \ref{cd:cr1}.
By \cite[Lemma V.5.9]{Kollar}, a generator $\eta$ of the invertible sheaf $\mcM$ can be expressed as
\[
\eta = \frac{d x_2 \wedge \cdots \wedge d x_n}{\prt f/\prt x_2}
= \frac{d x_2 \wedge \cdots \wedge d x_n}{x_3 + h},
\]
where $h = h (x_1,\dots,x_n) \in (x_1,\dots,x_n)^2$.
On the $x_2$-chart of $Y$, that is, the chart with coordinates $y_2  = x_2$, $y_i = x_i/y_2$ for $i \ne 2$, we have
\[
\varphi^* \eta = \frac{d y_2 \wedge d y_2 y_3 \wedge d y_4  \cdots \wedge d y_n}{y_2 y_3 + y_2^2 (\cdots)}
= \frac{d y_2 \wedge \cdots \wedge d y_n}{y_3 + y_2 (\cdots)}.
\]
Thus $\varphi^*\eta$ does not have a pole along $E$, which implies $\varphi^*\mcM \inj \Omega_Y^{n-1}$.
The proof can be done similarly when $f$ satisfies Condition \ref{cd:cr2} and we omit it.
\end{proof}

\subsection{Resolution of singularities}

Let $\K$ be an algebraically closed field of characteristic $p \in \{2,3\}$ and let $Z = \mbA^{m+2}$ be the affine space over $\K$ with affine coordinates $u_1,\dots,u_m$ and $x, y$.
The aim of this subsection is to construct a resolution of singularities of cyclic covers of $Z$ branched along $(f = 0) \subset Z$ for suitable polynomials $f = f (u,x,y)$.

\begin{Def}
For polynomials $f_1, f_2 \in \K [u,x,y]$, we write $f_1 \sim_p f_2$ if $f_1 - f_2 = h^p$ for some $h \in \K [u,x,y]$.
\end{Def}

It is easy to see that if $f_1 \sim_p f_2$, then the sets of critical points of $f_1$ and $f_2$ coincide.
We introduce the following conditions on $f$.

\begin{Cond} \label{cond:resolposcrit1}
In case $p = 2$ we have
\[
f \sim_2 a x + b x^2 + c x y + y^3 + g,
\]
and in case $p = 3$ we have
\[
f \sim_3 a x + b x^2 + c x y + y^2 + g,
\]
where
\begin{enumerate}
\item $a,b,c$ are polynomials in variables $u_1,\dots,u_m$ with $\deg (a) > 0$,
\item the hypersurface in $\mbA^m_{u_1,\dots,u_m}$ defined by $a = 0$ is nonsingular,
\item $g = g (u,x,y)$ is contained in the ideal $(x,y)^3 \subset \K [u,x,y]$,
\item if $p = 2$, then any monomial in $g$ divisible by $y^3$ is divisible by either $y^3 x$ or $y^4$, and
\item along an open subset of $Z$ containing $\Xi = (x = y = a = 0)$, the set of critical points of $f$ coincides with $\Xi$.
\end{enumerate}
\end{Cond}

\begin{Cond} \label{cond:resolposcrit2}
The characteristic $p$ of the base field is $2$ or $3$ and
\[
f \sim_p a x + b y + g,
\]
where
\begin{enumerate}
\item $a, b$ are polynomials in variables $u_1,\dots,u_m$ with $\deg (a), \deg (b) > 0$,
\item the complete intersection in $\mbA^m_{u_1,\dots,u_m}$ defined by $a = b = 0$ is nonsingular,
\item $g$ is contained in the ideal $(x,y)^2 \subset \K [u,x,y]$, and
\item along an open subset of $Z$ containing $\Xi = (x = y = a = b = 0)$, the set of critical points of $f$ coincides with $\Xi$.
\end{enumerate}
\end{Cond}

Let $X$ be the degree $p$ cover of $Z$ branched along $(f = 0) \subset Z$.
Explicitly, $X$ is the hypersurface in $\mbA^{m+3} = Z \times \mbA^1_w$ defined by $w^p + f = 0$.
We denote by $\psi \colon \tilde{X} \to X$ the blowup of $X$ along the nonsingular subvariety $\tau^{-1} (\Xi)$ and by $E$ its exceptional divisor.
Clearly $X$ is isomorphic to a hypersurface in $\mbA^{m+3}$ defined by $w^p + f_1 = 0$ for any $f_1 = f_1 (u,x,y)$ with $f \sim_p f_1$.

First we assume $f$ satisfies Condition \ref{cond:resolposcrit1}.
Our aim is to observe the blowup $\psi$, and thus we may assume 
\[
f = 
\begin{cases}
a x + b x^2 + c x y + y^3 + g, & \text{in case $p = 2$}, \\
a x + b x^2 + c x y + y^2 + g, & \text{in case $p = 3$},
\end{cases}
\]
where $a,b,c$ and $g$ are as in Condition \ref{cond:resolposcrit1}, and that $X$ is the hypersurface defined by
\[
F := w^p + f = 0.
\]
Note that $\tau^{-1} (\Xi) = (w = x = y = a = 0)$ and we this subvariety as $\Sigma$.

\begin{Lem} \label{lem:resol1}
Suppose that $f$ satisfies \emph{Condition \ref{cond:resolposcrit1}}. 
Then the variety $\tilde{X}$ is nonsingular on an open subset containing $E$.
Moreover, there is an isomorphism
\[
E \cong S \times \Sigma,
\]
where 
\[
S = (\delta_{p,2} \tilde{w}^2 + \tilde{t} \tilde{x} + \delta_{p,3} \tilde{y}^2 = 0) \subset \mbP^3_{\tilde{t},\tilde{x},\tilde{y},\tilde{w}},
\]
and $\psi|_E \colon E \to S$ coincides with the projection $S \times \Sigma \to \Sigma$ $($Here $\delta_{p,i}$ is the Kronecker delta$)$.
\end{Lem}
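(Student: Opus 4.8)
The plan is to compute the blowup $\psi\colon \tilde X\to X$ explicitly in the standard affine charts covering the exceptional divisor $E$ and read off both smoothness and the product structure. Recall $X=(F=0)\subset\mbA^{m+3}$ with $F=w^p+f$, and $\Sigma=\tau^{-1}(\Xi)=(w=x=y=a=0)$ is a nonsingular subvariety of codimension $4$ in $\mbA^{m+3}$ (the coordinates along $\Sigma$ being a choice of $m-1$ of the $u_i$, since $(a=0)\subset\mbA^m_u$ is smooth). The blowup of $\mbA^{m+3}$ along the smooth complete intersection $(w=x=y=a=0)$ is covered by four charts, with exceptional $\mbP^3$ coordinates $(\tilde w:\tilde x:\tilde y:\tilde t)$, where I write $\tilde t$ for the coordinate dual to $a$.

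First I would work in, say, the $x$-chart: $x=\tilde x$, $w=\tilde x\tilde w$, $y=\tilde x\tilde y$, $a=\tilde x\tilde t$ (here $a=a(u)$ is the regular function, and setting $a=\tilde x\tilde t$ amounts to solving for one of the $u_i$ using condition (2) of Condition \ref{cond:resolposcrit1}, i.e.\ that $(a=0)$ is smooth, so $\partial a/\partial u_j\ne 0$ for some $j$). Substituting into $F$ and using that $g\in(x,y)^3$, in case $p=2$ one gets
\[
F=\tilde x^2\bigl(\tilde w^2+\tilde t+b\tilde x+c\tilde x\tilde y+\tilde x\tilde y^3+(\text{higher order in }\tilde x)\bigr),
\]
so the strict transform $\tilde X$ in this chart is $(\tilde w^2+\tilde t+b\tilde x+\cdots=0)$; since $\partial/\partial\tilde t$ of this is the unit $1$, $\tilde X$ is smooth in this chart and the ideal of $\tilde x$ on it is the exceptional divisor $E$. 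Restricting to $E=(\tilde x=0)$ gives $(\tilde w^2+\tilde t=0)$, which after eliminating $\tilde t$ is just an affine chart of $\Sigma\times\mbA^2$; doing the analogous computation in the $w$-, $y$- and $t$-charts and gluing, one sees $E$ is the hypersurface $(\tilde w^2+\tilde t\tilde x=0)\subset\mbP^3_{\tilde t,\tilde x,\tilde y,\tilde w}\times\Sigma$ fibered over $\Sigma$, i.e.\ $E\cong S\times\Sigma$ with $S=(\tilde w^2+\tilde t\tilde x=0)$ — matching the $\delta_{p,2}$ formula. In case $p=3$ the constant term of $f$ is absent from the linear part in the new variables but the $y^2$ term survives at the bottom degree, producing $(\tilde t\tilde x+\tilde y^2=0)$ on $E$, which is the $\delta_{p,3}$ formula; the smoothness check is the same, using $\partial/\partial\tilde t=1$ on $\tilde X$. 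The statement that $\psi|_E$ is the projection to $\Sigma$ is immediate from the chart descriptions, since the $u$-coordinates along $\Sigma$ are left untouched by the substitution.

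The key point that makes everything work is that the coordinate $\tilde t$ dual to $a$ appears linearly with unit coefficient in the strict transform equation — this is exactly where hypotheses (1)–(2) of Condition \ref{cond:resolposcrit1} (namely $\deg a>0$ and $(a=0)$ nonsingular) and the requirement $g\in(x,y)^3$ (so that $g$ contributes nothing to the lowest-degree part after blowup in the $x$-, $y$-, $w$-charts, and is divisible by a high power of $\tilde t$ in the $t$-chart) are used. The one genuinely delicate chart is the $t$-chart ($a=\tilde t$, and $x=\tilde t\tilde x$, etc.), where one must check that no spurious singularities of $\tilde X$ appear along $E$; here condition (4) in the $p=2$ case — that any monomial of $g$ divisible by $y^3$ is in fact divisible by $y^3x$ or $y^4$ — is precisely what guarantees that the troublesome $y^3$ term, which otherwise would contribute at too low an order in the $t$-chart, does no harm, so this chart is the main obstacle and the place where I would be most careful. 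The remaining chart computations are routine substitutions of the same flavor as the one displayed above.
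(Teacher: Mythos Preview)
Your approach is correct and coincides with the paper's: both blow up $X$ along the smooth codimension-$3$ center $\Sigma=(w=x=y=a=0)$ and compute in the four standard charts. The paper executes this a bit more cleanly by first introducing an auxiliary variable $t$ with the relation $t=a$ (so $X$ becomes a complete intersection in $\mbA^{m+4}$) and absorbing the $bx^2$, $cxy$ terms via $t\mapsto t-cx-by$; after that the first equation is just $w^2+tx+y^3+g$, so blowing up along the \emph{linear} center $(t=x=y=w=0)$ makes the equation of $E$ come out in each chart directly as $\tilde w^2+\tilde t\tilde x=0$, with no $b,c$ pollution and hence an immediate product $S\times\Sigma$. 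In your version those terms survive --- in the $x$-chart the strict transform should read $\tilde w^2+\tilde t+b+c\tilde y+\tilde x\tilde y^3+\cdots$ (not $b\tilde x+c\tilde x\tilde y$; dividing $bx^2$ and $cxy$ by $\tilde x^2$ leaves $b$ and $c\tilde y$) --- so globally $E$ is cut out by $\tilde w^2+\tilde t\tilde x+b_0\tilde x^2+c_0\tilde x\tilde y=0$ in $\mbP^3\times\Sigma$, and you still need the fiberwise linear change $\tilde t\mapsto\tilde t+b_0\tilde x+c_0\tilde y$ to exhibit the product structure. Your identification of the $t$-chart as the delicate one, and of condition~(4) as exactly what rules out a singularity there, is correct.
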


\begin{proof}
The smoothness of $\tilde{X}$ along $E$ can be checked \'etale locally on the base $X$ and thus follows from Lemma \ref{lem:liftM}.

We give a proof assuming $p = 2$.
The proof is similar when $p = 3$.
In order to visualize the blowup $\psi$, we introduce a new variable $t$ and let $U = \mbA^{m+4}$ be the affine space with coordinates $u_1,\dots,u_m,t,x,y,w$.
Then $X$ is naturally isomorphic to the complete intersection in $U$ defined by
\[
w^2 + t x + b x y + c x^2 + y^3 + g = t - a = 0.
\]
Replacing $t \mapsto t - b y - c x$, the above equation can be written as
\[
w^2 + t x + y^3 + g = t - a - h = 0,
\]
where $g, h \in \K [u,x,y]$ with $g \in (x,y)^3$ and $h \in (x,y)$.
Let $\Psi \colon V \to U$ be the blowup along $(t = x = y = w = 0)$.
Then $\psi \colon \tilde{X} \to X$ is identified with the restriction of $\Psi$ to the proper transform of $X \subset U$ via $\Psi$.
The variety $V$ is covered by standard affine open charts $V_t, V_x, V_y$ and $V_w$, which will be described below.

The $t$-chart $V_t$ is an affine space $\mbA^{m+4}$ with coordinates $u_1,\dots,u_m$, $\tilde{t} = t$, $\tilde{x} = x/t$, $\tilde{y} = y/t$ and $\tilde{w} = w/t$.
$\tilde{X} \cap V_t$ is defined by
\[
\tilde{w}^2 + \tilde{x} + \tilde{t} \tilde{y}^3 + \tilde{t} \tilde{g}_t = \tilde{t} - a - \tilde{t} \tilde{h}_t = 0,
\]
where $\tilde{g}_t = g (u,\tilde{t} \tilde{x},\tilde{t} \tilde{y})/\tilde{t}^3$ and $\tilde{h}_t = h (u,\tilde{t} \tilde{x},\tilde{t} \tilde{y})/\tilde{t}$.
On this chart, the exceptional divisor $E$ is cut out by $\tilde{t} = 0$ and we have
\[
E \cap V_t = (\tilde{t} = \tilde{w}^2 + \tilde{x} = a = 0) \subset V_t.
\]

The $x$-chart $V_x$ is an affine space $\mbA^{m+4}$ with coordinates $u_1,\dots,u_m$, $\tilde{t} = t/x$, $\tilde{x} = x$, $\tilde{y} = y/x$ and $\tilde{w} = w/x$.
$\tilde{X} \cap V_x$ is defined by
\[
\tilde{w}^2 + \tilde{t} + \tilde{x} \tilde{y}^3 + \tilde{x} \tilde{g}_x = \tilde{x} \tilde{t} - a - \tilde{x} \tilde{h}_x = 0,
\]
where $\tilde{g}_x = g (u,\tilde{x},\tilde{x} \tilde{y})/\tilde{x}^3$ and $\tilde{h}_x = h (u,\tilde{x},\tilde{x} \tilde{y})/\tilde{x}$, and we have
\[
E \cap V_x = (\tilde{x} = \tilde{w}^2 + \tilde{t} = a = 0) \subset V_x.
\]

The $y$-chart $V_y$ is an affine space $\mbA^{m+4}$ with coordinates $u_1,\dots,u_m$, $\tilde{t} = t/y$, $\tilde{x} = x/y$, $\tilde{y} = y$ and $\tilde{w} = w/y$.
$\tilde{X} \cap V_y$ is defined by
\[
\tilde{w}^2 + \tilde{t} \tilde{x} + \tilde{y} + \tilde{y} \tilde{g}_y = \tilde{y} \tilde{t} - a - \tilde{y} \tilde{h}_y = 0,
\]
where $\tilde{g}_y = g (u,\tilde{y} \tilde{x},\tilde{y})/\tilde{y}^3$ and $\tilde{h}_y = h (u, \tilde{y} \tilde{x}, \tilde{y})/\tilde{y}$, and we have
\[
E \cap V_y = (\tilde{y} = \tilde{w}^2 + \tilde{t} \tilde{x} = a = 0) \subset V_y.
\]

The $w$-chart $V_w$ is an affine space $\mbA^{m+4}$ with coordinates $u_1,\dots,u_m$, $\tilde{t} = t/w$, $\tilde{x} = x/w$, $\tilde{y} = y/w$ and $\tilde{w} = w$.
$\tilde{X} \cap V_w$ is defined by
\[
1 + \tilde{t} \tilde{x} + \tilde{w} \tilde{y}^3 + \tilde{w} \tilde{g}_w = \tilde{w} \tilde{t} - a - \tilde{w} \tilde{h}_w = 0,
\]
where $\tilde{g}_w = g (u, \tilde{w} \tilde{x},\tilde{w} \tilde{y})/\tilde{w}^3$ and $\tilde{h}_x = h (u, \tilde{w} \tilde{x},\tilde{w} \tilde{y})/\tilde{w}$, and we have
\[
E \cap V_w = (\tilde{w} = 1 + \tilde{t} \tilde{x} = a = 0) \subset V_w.
\]

By gluing $E \cap V_t,\dots,E \cap V_w$, we see that $E$ is isomorphic to 
\[
(\tilde{w}^2 + \tilde{t} \tilde{x} = a = 0) \subset \mbP^3 \times \mbA^m_u,
\]
which coincides with $S \times \Sigma$, and the morphism $\psi|_E \colon E \to \Sigma$ is the projection $S \times \Sigma \to \Sigma$.
\end{proof}

Next, we assume that $f$ satisfies Condition \ref{cond:resolposcrit2}.
Then we may assume $f = a x + b y + g$, where $a,b$ and $g$ are as in Condition \ref{cond:resolposcrit2}, and that $X$ is the hypersurface in $\mbA^{m+3}$ defined by the equation
\[
F := w^p + a x + b y + g = 0.
\]
Note that $\tau^{-1} (\Xi) = (w = x = y = a = b = 0)$.
\begin{Lem} \label{lem:resol2}
The variety $\tilde{X}$ is nonsingular on an open subset containing $E$.
Moreover, there is an isomorphism
\[
E \cong S \times \Sigma,
\]
where 
\[
S = (\delta_{p,2} \tilde{w}^2 + \tilde{s} \tilde{x} + \tilde{t} \tilde{y} = 0) \subset \mbP^4_{\tilde{s}, \tilde{t},\tilde{x},\tilde{y},\tilde{w}},
\]
and $\psi|_E \colon E \to S$ coincides with the projection $S \times \Sigma \to \Sigma$ $($Here, $\delta_{p,2}$ is the Kronecker delta$)$.
\end{Lem}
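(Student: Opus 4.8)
The plan is to follow the proof of Lemma \ref{lem:resol1}, indicating the points where the presence of two ``linear'' coefficients $a,b$ in place of one forces a modification. Throughout write $\Sigma = \tau^{-1}(\Xi) = (w = x = y = a = b = 0) \subset X$ for the center of $\psi$.

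\emph{Smoothness of $\tilde X$ along $E$.} As in Lemma \ref{lem:resol1}, this can be checked \'etale-locally on $X$ at a point lying over $\Xi$. Since the complete intersection $(a = b = 0) \subset \mbA^m_u$ is nonsingular, the functions $x, a, y, b$ extend to an \'etale-local system of coordinates on $Z$ centered at the relevant point of $\Xi$, and putting $x_1 = x$, $x_2 = a$, $x_3 = y$, $x_4 = b$ we find $f = \alpha + x_1 x_2 + x_3 x_4 + g$ with $\alpha \in \K$, $g \in (x_1, x_3)^2$, and $\Crit(f) = (x_1 = x_2 = x_3 = x_4 = 0)$ by Condition \ref{cond:resolposcrit2}(4); that is, Condition \ref{cd:cr2} is satisfied. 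By Lemma \ref{lem:liftM} the blowup of the corresponding cyclic cover along the inverse image of the critical locus is nonsingular; as this blowup coincides \'etale-locally with $\psi$, the variety $\tilde X$ is nonsingular along $E$, hence on an open set containing $E$.

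\emph{Identification of $E$.} To make the blowup explicit I would first absorb $g$. Writing $g = x g_1 + y g_2$ with $g_1, g_2 \in (x,y)\K[u,x,y]$ gives $f = (a + g_1) x + (b + g_2) y$. Introducing two auxiliary variables $s, t$ and letting $U = \mbA^{m+5}$ have coordinates $u_1, \dots, u_m, s, t, x, y, w$, the projection forgetting $s,t$ identifies $X$ with the complete intersection in $U$ cut out by
\[
w^p + s x + t y = s - a - g_1 = t - b - g_2 = 0 .
\]
On $X$ the ideal $(s, t, x, y, w)$ restricts to $(a, b, x, y, w) = I_\Sigma$ (using $g_1, g_2 \in (x, y)$), so $\Sigma$ is the scheme-theoretic intersection $X \cap C$ with $C = (s = t = x = y = w = 0)$; moreover the coordinate change absorbing $g$ fixes $C$, since $g_1, g_2$ vanish on $(x = y = 0)$. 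Hence $\psi$ is identified with the restriction to the strict transform of $X$ of the blowup $\Psi \colon V \to U$ of $U$ along $C$, and $V$ is covered by five standard affine charts $V_s, V_t, V_x, V_y, V_w$. In each of them the pulled-back polynomial $w^p + sx + ty$ equals the square of the corresponding chart coordinate times a polynomial which, on the exceptional divisor (cut out by that coordinate), becomes the matching affine chart of $(\delta_{p,2} \tilde w^2 + \tilde s \tilde x + \tilde t \tilde y = 0)$ --- e.g.\ in the $w$-chart $w^p + sx + ty = \tilde w^2(\tilde w^{p-2} + \tilde s \tilde x + \tilde t \tilde y)$ --- while the two constraint equations are not divisible by the chart coordinate and so persist on the strict transform, restricting on $E$ to $a = b = 0$. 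Gluing the five charts yields $E \cong S \times \Sigma$ with $S = (\delta_{p,2} \tilde w^2 + \tilde s \tilde x + \tilde t \tilde y = 0) \subset \mbP^4_{\tilde s, \tilde t, \tilde x, \tilde y, \tilde w}$ and $\psi|_E$ the projection to $\Sigma$, just as in Lemma \ref{lem:resol1}.

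The step I expect to be the real point of the argument is the absorption of $g$. In Lemma \ref{lem:resol1} one has $g \in (x,y)^3$, so after the blowup $g$ carries an extra power of the exceptional coordinate and disappears on $E$; here $g$ lies only in $(x, y)^2$, so its quadratic-in-$(x,y)$ part survives on $E$ and would deform the hypersurface $S$, and it must be eliminated beforehand by the coordinate change in $s, t$. Once that is arranged, and once one checks that the modified constraints $s = a + g_1$, $t = b + g_2$ still restrict to $a = b = 0$ on $E$ and that the change of coordinates preserves $C$, the five-chart computation and the gluing are routine and formally identical to those in the proof of Lemma \ref{lem:resol1}, so I would present them briefly or suppress the repetitive cases, as was done there.
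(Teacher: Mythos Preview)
Your proof is correct and follows essentially the same approach as the paper's: the paper also introduces two auxiliary variables $s,t$, rewrites $X$ as the complete intersection $w^p + sx + ty + g = s - a = t - b = 0$ in $\mbA^{m+5}$, then ``filters off terms divisible by $x,y$ and replaces $s,t$'' to obtain $w^p + sx + ty = s - a - q = t - b - h = 0$ with $q,h \in (x,y)$ (your $g_1,g_2$), blows up $(s=t=x=y=w=0)$, and reads off $E$ chart by chart. Your explicit identification of why the absorption of $g$ is needed here but not in Lemma~\ref{lem:resol1} --- namely that $g \in (x,y)^2$ rather than $(x,y)^3$, so its quadratic part would otherwise survive on $E$ --- is a point the paper leaves implicit.
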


\begin{proof}
The proof is similar to that of Lemma \ref{lem:resol1}.
The smoothness of $\tilde{X}$ follows from Lemma \ref{lem:liftM}.
Let $U = \mbA^{m+5}$ be the affine space with coordinates $u,s,t,x,y,w$.
The variety $X$ is isomorphic to the complete intersection in $U$ defined by
\[
w^p + s x + t y + g = s - a = t - b = 0.
\]
Filtering off terms divisible by $x, y$ and then replacing $s, t$, the above equations can be written as
\[
w^p + s x + t y = s - a - q = t - b - h = 0,
\]
where $q = q (u,x,y), h = h (u,x,y)$ are contained in $(x,y)$.
Let $\Psi \colon V \to U$ be the blowup of $U$ along $(s = t = x = y = w = 0)$.
Then $\psi \colon \tilde{X} \to X$ can be identified with the restriction of $\Psi$ to the proper transform of $X \subset U$.
The variety $V$ is covered by standard affine charts $V_s, V_t,V_x,V_y$ and $V_w$.
The description of these charts are similar to those in the proof of Lemma \ref{lem:resol1}, and the description of $E$ and $\psi|_E \colon E \to \Sigma$ follows.
\end{proof}

\begin{Rem} \label{rem:CHtriv}
Let $\psi \colon \tilde{X} \to X$ be as in Lemma \ref{lem:resol1} or \ref{lem:resol2}.
Let $\msp \in X$ be a scheme point and we consider the fiber $\psi^{-1} (\msp)$, viewed as a scheme over the residue field $k (\msp)$.
If $\msp \notin \Sigma$, then $\varphi^{-1} (\msp)$ is a point.
If $\msp \in \Sigma$, then $\psi^{-1} (\msp)$ is $S_{k (\msp)}$ defined over $k (\msp)$, where $S$ is the quadric hypersurface given in Lemma \ref{lem:resol1} or \ref{lem:resol2}.
Clearly $S_{k (\msp)}$ is universally $\CH_0$-trivial.
Therefore, by \cite[Proposition 1.8]{CTP}, the morphism $\psi$ is universally $\CH_0$-trivial. 
\end{Rem}

\subsection{Smoothness of certain hypersurfaces}

\begin{Lem} \label{lem:crismhyp}
Let $\mbA^{n+1}$ be an affine space with coordinates $x_1,\dots,x_n, w$, and let $X \subset \mbA^{n+1}$ be the hypersurface defined by 
\[
F := w g (x_1,\dots,x_n) + f (x_1,\dots,x_n) = 0.
\]
Suppose that the complete intersection in $\mbA^n$ with coordinates $x_1,\dots,x_n$ defined by $f = g =0$ is nonsingular.
Then $X$ is nonsingular.
\end{Lem}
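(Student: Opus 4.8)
The plan is to verify the Jacobian criterion at an arbitrary point of $X$, splitting into two cases according to whether $g$ vanishes.

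First I would record the partial derivatives of $F = w\,g(x_1,\dots,x_n) + f(x_1,\dots,x_n)$: one has $\prt F/\prt w = g$ and $\prt F/\prt x_i = w\,(\prt g/\prt x_i) + \prt f/\prt x_i$ for $i = 1,\dots,n$. A point $\msp = (a_1,\dots,a_n,b) \in X$ is a singular point of $X$ precisely when all of these vanish at $\msp$. In \emph{Case 1}, where $g(a_1,\dots,a_n) \ne 0$, the partial $\prt F/\prt w$ is nonzero at $\msp$, so $\msp$ is a nonsingular point and there is nothing to prove.

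The substance is in \emph{Case 2}, where $g(a_1,\dots,a_n) = 0$. Since $\msp \in X$, the relation $b\,g(a_1,\dots,a_n) + f(a_1,\dots,a_n) = 0$ forces $f(a_1,\dots,a_n) = 0$ as well, so $\bar{\msp} := (a_1,\dots,a_n)$ lies on the subscheme $W := (f = g = 0) \subset \mbA^n$. By hypothesis $W$ is a nonsingular complete intersection, hence of codimension $2$, and the Jacobian criterion gives that the gradient vectors $(\prt f/\prt x_i(\bar{\msp}))_{i=1}^n$ and $(\prt g/\prt x_i(\bar{\msp}))_{i=1}^n$ are linearly independent. On the other hand, the vanishing of all $\prt F/\prt x_i$ at $\msp$ says exactly that $(\prt f/\prt x_i(\bar{\msp}))_i = -b\,(\prt g/\prt x_i(\bar{\msp}))_i$, contradicting this linear independence. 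Hence $\msp$ is not a singular point of $X$, and since every point of $X$ falls into one of the two cases, $X$ is nonsingular.

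The argument is entirely elementary and no step should present real difficulty; the only point deserving a moment's care — and the closest thing to an obstacle — is the passage inside Case 2 from the hypothesis ``$(f = g = 0)$ is a nonsingular complete intersection'' to ``the Jacobian of $(f,g)$ has rank $2$ at each of its points'', which uses that the complete intersection has the expected codimension $2$. (If $(f = g = 0)$ happens to be empty, Case 2 never arises and the lemma is immediate.)
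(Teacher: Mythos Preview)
Your proof is correct and follows essentially the same approach as the paper: both observe that any singular point must satisfy $g=0$ (from $\prt F/\prt w = g$), hence also $f=0$, and then derive a contradiction from the rank-$2$ Jacobian of $(f,g)$ at the projected point versus the linear relation $b\,\nabla g + \nabla f = 0$ coming from the vanishing of the $\prt F/\prt x_i$. The paper packages the last step as $({}^t\!J_V)(\msq)\begin{pmatrix}\beta\\1\end{pmatrix}=0$, but this is exactly your linear-dependence contradiction.
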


\begin{proof}
It is clear that the singular locus $\Sing (X)$ is contained in the closed subset $(g = 0) \subset X$.
We set $V = (f = g = 0) \subset \mbA^n$ and let $J_V$ be the Jacobian matrix of $V \subset \mbA^n$.
Suppose that $X$ has a singular point $\msp = (\alpha_1,\dots,\alpha_n,\beta)  \in X$ and set $\msq = (\alpha_1,\dots,\alpha_n) \in \mbA^n$.
Then $\msq \in V$ and we have
\[
\frac{\prt g}{\prt x_i} (\msq) \beta + \frac{\prt f}{\prt x_i} (\msq) = 0
\]
for $i = 1,\dots,n$.
This implies
\[
({}^t \! J_V) (\msq) \begin{pmatrix}
\beta \\
1
\end{pmatrix} = 
\begin{pmatrix}
0 \\
\vdots \\
0
\end{pmatrix},
\]
which is impossible since the rank of $J_V (\msq)$ is $2$.
Therefore $X$ is nonsingular.
\end{proof}

\subsection{Surjectivity of restriction maps}

We study surjectivity of restriction maps of global sections of a line bundle.
We work over an algebraically closed field.

\begin{Def}
Let $Z$ be a normal quasi-projective variety and $\mcN$ an invertible sheaf on $Z$.
For a positive integer $k$ and a nonsingular point $\msp \in Z$, the restriction map
\[
r_{\mcN,k} (\msp): H^0 (Z, \mcN) \to \mcN \otimes (\mcO_Z/\mfm^k_{\msp}),
\]
where $\mfm_{\msp}$ is the maximal ideal of $\mcO_{Z,\msp}$, is called the {\it $k$th restriction map of $\mcN$ at $\msp$}.
\end{Def}

Let $Q$ be a WPS bundle over $\mbP^n$ defined by
\[
\begin{pmatrix}
u_0 & \cdots & u_n & & x & y & z \\
1 & \cdots & 1 & | & 0 & \lambda & \mu \\
0 & \cdots & 0 & | & 1 & 1 & m
\end{pmatrix},
\] 
where $m$ is a positive integer.
We assume that $\lambda, \mu \ge 0$.
We define 
\[
U_x = (x \ne 0) \subset Q, \ 
\Pi_y = (x = 0) \cap (y \ne 0) \subset Q, \  
\Gamma_z = (x = y = 0) \subset Q,
\]
so that $Q$ is the disjoin union of $U_x$, $\Pi_y$ and $\Gamma_z$.

For positive integers $\delta$, $d$ and $k$, we consider the restriction map
\[
r_k (\msp) \colon H^0 (Q, \mcO_Q (\delta,d)) \to \mcO_Q (\delta,d) \otimes (\mcO_Q/\mfm_{\mfp}^k),
\]
and consider its surjectivity.

\begin{Lem} \label{lem:surjrest}
Suppose that $d \ge 3 m$ and $0 \le \lambda, \mu$.
Then the following hold.
\begin{enumerate}
\item If $\delta \ge \max \{2, 2 \lambda, 2 \mu\}$, then $r_3 (\msp)$ is surjective for any $\msp \in U_x$.
\item If $\delta \ge \max \{3, 3 \lambda, 3 \mu\}$, then $r_4 (\msp)$ is surjective for any point $\msp \in U_x$.
\item If $\mu \ge m \lambda$ and $\delta \ge \max \{ d \lambda + 1, (d-m) \lambda + \mu \}$, then $r_2 (\msp)$ is surjective for any point $\msp \in \Pi_y$.
\item If $m = 1$, $\lambda \le \mu$ and $\delta \ge d \mu + 1$, then $r_2 (\msp)$ is surjective for any $\msp \in \Gamma_z$.
\end{enumerate}
\end{Lem}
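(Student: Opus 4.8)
The plan is to use the identification $H^0(Q,\mcO_Q(\delta,d))\cong\Cox(Q)_{(\delta,d)}$ to reduce the surjectivity of each restriction map $r_k(\msp)$ to an explicit monomial‑realization problem. Suppose $\msp$ is a nonsingular point which, after a suitable change of coordinates, sits at the origin of a standard affine chart $U_{i,j}$ whose affine coordinates form a subset of $\{u_0,\dots,u_n,x,y,z\}$. By the description of restriction to affine charts recalled in the preliminaries, $r_k(\msp)$ is computed by substituting $u_i=x_j=1$ and then reducing modulo $\mfm_\msp^k$; in particular a monomial of bidegree $(\delta,d)$ restricts to a monomial in the affine coordinates. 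Hence it suffices to produce, for every monomial in those affine coordinates of degree $\le k-1$, a monomial of bidegree $(\delta,d)$ whose restriction is that monomial: the images of the corresponding sections are then exactly the monomials of degree $\le k-1$, which form a basis of $\mcO_{Q,\msp}/\mfm_\msp^k$, so $r_k(\msp)$ is surjective.

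The first step of the argument proper is to bring $\msp$ to the origin of the appropriate chart. A linear change of $u_0,\dots,u_n$, extended to an automorphism of $Q$, puts the $u$-coordinates of $\msp$ at $(1:0:\cdots:0)$. If $\msp\in U_x$, then since the $F$-weight of $x$ is $0\le\lambda,\mu$, the coordinate substitutions recalled in the preliminaries remove the $y$- and $z$-coordinates of $\msp$, so $\msp$ becomes the origin of $U_{0,x}$ with affine coordinates $u_1,\dots,u_n,y,z$. If $\msp\in\Pi_y$, its $x$-coordinate already vanishes, and the hypothesis $\mu\ge m\lambda$ is precisely the condition making the analogous substitution remove the $z$-coordinate, so $\msp$ becomes the origin of $U_{0,y}$ with affine coordinates $u_1,\dots,u_n,x,z$. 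If $\msp\in\Gamma_z$ and $m=1$, then $x=y=0$ already and $\Gamma_z$ lies in the nonsingular locus of $Q$ (this is where $m=1$ enters), so $\msp$ becomes the origin of $U_{0,z}$ with affine coordinates $u_1,\dots,u_n,x,y$.

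The second step is the monomial count. Writing a candidate as $u_0^{c_0}\cdots u_n^{c_n}x^ay^bz^c$, having bidegree $(\delta,d)$ means $c_0+\cdots+c_n+\lambda b+\mu c=\delta$ and $a+b+mc=d$. For a prescribed target monomial in the affine coordinates all exponents but two are forced, and one solves for $c_0$ and for the remaining exponent — $a$ on $U_x$, $b$ on $\Pi_y$, $c$ on $\Gamma_z$ — the target being realized exactly when both come out nonnegative. On $U_x$ the targets are the monomials of degree $\le 2$ (for $r_3$) or $\le 3$ (for $r_4$) in $u_1,\dots,u_n,y,z$; then $a=d-b-mc\ge d-2m\ge 0$ (resp.\ $\ge d-3m\ge 0$) because $b+mc\le m(b+c)$ with $b+c\le 2$ (resp.\ $3$), and $c_0=\delta-\sum_{i\ge1}c_i-\lambda b-\mu c\ge\delta-\max\{2,2\lambda,2\mu\}\ge 0$ (resp.\ $\ge\delta-\max\{3,3\lambda,3\mu\}\ge 0$) because $\sum_{i\ge1}c_i+\lambda b+\mu c$ is bounded by $\max\{1,\lambda,\mu\}$ times the total degree; this uses $d\ge 3m$ and the stated bound on $\delta$. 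On $\Pi_y$ the targets $1,u_1,\dots,u_n,x,z$ give $b=d,d,d-1,d-m$, all $\ge 0$ since $d\ge 3m$, and $c_0\ge 0$ exactly when $\delta\ge\max\{d\lambda+1,(d-m)\lambda+\mu\}$, the first term being forced by the $u_j$ and the second by $z$. On $\Gamma_z$ with $m=1$ the targets $1,u_1,\dots,u_n,x,y$ give $c=d,d,d-1,d-1$, all $\ge 0$, and $c_0\ge 0$ exactly when $\delta\ge d\mu+1$, with the hypothesis $\lambda\le\mu$ needed to make the target $y$ work. This exhausts the four cases.

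The count itself is routine; the point requiring care is the first step, namely checking that the hypotheses $\lambda,\mu\ge 0$, $\mu\ge m\lambda$, and $m=1$ in cases (1)--(2), (3), (4) are exactly what allows $\msp$ to be moved to the origin of a nonsingular standard affine chart. Once $\msp$ has been normalized, I would organize the write-up as a short exponent table for each of the four cases.
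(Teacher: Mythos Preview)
Your proposal is correct and follows essentially the same approach as the paper: normalize $\msp$ to the origin of the appropriate standard affine chart (using the coordinate changes from the preliminaries, with the hypotheses $\lambda,\mu\ge 0$, $\mu\ge m\lambda$, $m=1$ guaranteeing this is possible in each case), then exhibit monomials of bidegree $(\delta,d)$ restricting to a basis of $\mcO_{Q,\msp}/\mfm_\msp^k$. The paper simply lists the monomials explicitly in each case, while you phrase it as solving for the two free exponents and checking nonnegativity; the content is the same.
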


\begin{proof}
Set $\mcL = \mcO_Q (\delta,d)$.
A global section of $\mcL$ is a linear combination of the monomials
\[
\{ u_0^{i_1} \cdots u_n^{i_n} x^j y^k z^l \mid i_0 + \cdots + i_n + k \lambda + l \mu = \delta, \ 
j + k + m l = d \}.
\]
We prove (1) and (2).
By replacing coordinates we may assume $\msp = (1\!:\!0\!:\!\cdots\!:\!0 ; 1\!:\!0\!:\!0)$.
If $\delta \ge \max \{2, 2\lambda, 2 \mu\}$, then
\[
u_i u_j u_0^{\delta - 2} x^d, \ 
u_i u_0^{\delta - \lambda -1} x^{d-1} y, \ 
u_i u_0^{\delta - \mu - 1} x^{d-m} z,
\]
\[
u_0^{\delta - 2 \lambda} x^{d-m} y^2, \ 
u_0^{\delta - \lambda - \mu} x^{d-m-1} y z, \
u_0^{\delta - 2 \mu} x^{d-2m} z^2,
\]
where $0 \le i,j \le n$, are contained in $H^0 (Q,\mcL)$ and they restricts to basis of $\mcL \otimes (\mcO_Z/\mfm^3_{\msp})$.
Similarly, if $\delta \ge \max \{3, 3 \lambda, 3\mu\}$, then 
\[
u_i u_j u_k u_0^{\delta - 3} x^d, \ 
u_i u_j u_0^{\delta - \lambda -2} x^{d-1} y, \ 
u_i u_j u_0^{\delta - \mu -2} x^{d-m} z,
\]
\[
u_i u_0^{\delta - 2 \lambda -1} x^{d-2} y^2, \ 
u_i u_0^{\delta - \lambda - \mu -1} x^{d-1-m} y z, \ 
u_i u_0^{\delta - 2 \mu -1} x^{d-2m} z^2,
\]
\[
u_0^{\delta - 3 \lambda} x^{d-3} y^3, \ 
u_0^{\delta - 2 \lambda - \mu} x^{d-2-m} y^2 z, \ 
u_0^{\delta - \lambda - 2 \mu} x^{d-1-2m} y z^2, \ 
u_0^{\delta - 3 \mu} x^{d - 3m} z^3,
\]
where $0 \le i,j,k \le n$, are contained in $H^0 (Q,\mcL)$ and they restrict to a basis of $\mcL \otimes (\mcO_Q/\mfm_{\msp}^4)$.
This proves (1) and (2).

We prove (3).
Let $\msp \in \Pi_y$ be a point.
By the assumption $\mu \ge m \lambda$, we may assume $\msp = (1\!:\!0\!:\!\cdots\!:\!0 ; 0\!:\!1\!:\!0)$.
Then, since $\delta \ge \max \{d \lambda + 1, (d-m) \lambda + \mu \}$, we see that
\[
u_i u_0^{\delta - d \lambda -1} y^d, \ 
u_0^{\delta - (d-1) \lambda} x y^{d-1}, \ 
u_0^{\delta - (d-m) \lambda - \mu} y^{d-m} z,
\]
where $0 \le i \le n$, are contained in $H^0 (Q,\mcL)$ and they restrict to a basis of $\mcL \otimes (\mcO_Q/\mfm_{\msp}^2)$. 
Thus (3) is proved.

Finally we prove (4).
By (3), $r_2 (\msp)$ is surjective at any point $\msp \in \Pi_y$ and it remains to prove the surjectivity along $\Gamma_z$.
We may assume $\msp =(1\!:\!0\!:\!\cdots\!:\!0 ; 0\!:\!0\!:\!1)$.
Then, since $\delta \ge d \mu + 1 = \max \{d \mu + 1, (d-1) \mu + \lambda \}$, we see that
\[
u_i u_0^{\delta -d \mu + 1} z^d, \ 
u_0^{\delta - (d-1) \mu} x z^{d-1}, \ 
u_0^{\delta -(d-1) \mu - \lambda} y z^{d-1},
\]
where $0 \le i \le n$, are contained in $H^0 (Q,\mcL)$ and they restrict to basis of $\mcL \otimes (\mcO_Q/\mfm_{\msp}^2)$.
This completes the proof.
\end{proof}

%

\begin{Rem}
Suppose that the characteristic of the ground field is $p > 0$ and let $\mcL$ be an invertible sheaf on $Q$.

The surjectivity of the 3rd (resp.\ 4th) restriction map of $\mcL^p$ on $U_x$ implies that a general section $f \in H^0 (Q,\mcL^p)$ has only nondegenerate (resp.\ almost nondegenerate) critical points on $U_x$ when $p \ne 2$ or $p = 2$ and $\dim Q$ is even (resp.\ $p = 2$ and $\dim Q$ is odd).  

The surjectivity of the 2nd restriction map of $\mcL^p$ on the set $\Xi$, where $\Xi = \Pi_y, \Gamma_z$ or $\Pi_y \cup \Gamma_z$, implies that a general section $f \in H^0 (Q, \mcL^p)$ does not have a critical point on $\Xi$. 
This is because the surjectivity imposes $\dim Q$ independent conditions for a global section of $\mcL^p$ to have a critical point at a given point $\msp \in \Xi$ and on the other hand we have $\dim \Xi < \dim Q$.
\end{Rem}

\section{Del Pezzo fibrations of degree $1$}


For integers $n, \lambda$ and $\mu$ such that $n \ge 3$ and $\lambda \ge 0$, we denote by $P_1 (n, \lambda,\mu)$ the $\mbP (1,1,2,3)$-bundle over $\mbP^{n-2}$ defined by
\[
\begin{pmatrix}
u_0 & \cdots & u_{n-2} & & x & y & z & w \\
1 & \cdots & 1 &  | & 0 & \lambda & 2 \mu & 3 \mu \\
0 & \cdots & 0 &  | & 1 & 1 & 2 & 3
\end{pmatrix},
\]
and consider the complete linear system $|\mcO_P (6 \mu,6)|$, where $P = P_1 (n,\lambda,\mu)$.
The aim of the present section is to prove the following.

\begin{Thm} \label{thm:dP1}
Suppose that the ground field is $\mbC$.
Let $X$ be a very general member of $|\mcO_P (6 \mu,6)|$, and suppose that $\pi \colon X \to \mbP^{n-2}$ is a nonsingular del Pezzo fibration.
If $4 \mu - \lambda - (n-1) \ge 0$, then $X$ is not stably rational.
\end{Thm}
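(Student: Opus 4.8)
The plan is to run the strategy outlined in Section~\ref{sec:outline} for the degree~$1$ case, transporting everything to characteristic~$2$ via the specialization theorem. First I would pass to an algebraically closed field $\K$ of characteristic $2$ and choose a special member $X \in |\mcO_P(6\mu,6)|$ of the form $w^2 + f(u,x,y,z) = 0$, with $f$ a sufficiently general section of $\mcO_Q(6\mu,6)$ where $Q = Q_1(n,\lambda,\mu)$ is the $\mbP(1,1,2)$-bundle obtained by discarding the coordinate $w$. Then $\tau \colon X \to Z = Q$ is the degree $2$ cyclic cover branched along $(f=0)$, and I would identify $Z^{\circ} = Z \setminus (x=y=0) = U_x \cup \Pi_y$ as the nonsingular locus of $Z$ (the section $\Gamma_z$ consists of quotient singularities coming from the weight-$2$ coordinate $z$). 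Set $\mcL = \mcO_Z(3\mu,3)$, so $\mcL^2 = \mcO_Z(6\mu,6)$ and $f \in H^0(Z,\mcL^2)$; on $Z^{\circ}$ Koll\'ar's construction of Section~\ref{sec:cyccov} applies and yields $\mcM^{\circ} \cong \tau^*(\omega_{Z^{\circ}} \otimes (\mcL^{\circ})^2)$.

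The heart of the argument is establishing items (1), (2), (3) from Section~\ref{sec:outline}. For~(1), I would show that for general $f$ the cover $X$ is nonsingular over $\Gamma_z$: the equation $w^2 + f$ together with the chart descriptions shows smoothness off the critical locus of $f$, and a general $f$ has no critical point on the low-dimensional set $\Gamma_z$ by the surjectivity of the $2$nd restriction map there (Lemma~\ref{lem:surjrest}(4), whose hypotheses $m=1$, $\lambda \le \mu$, $\delta = 6\mu \ge 6\mu+1$... — here one must be careful, so the actual input is that $f$ avoids critical points on $\Gamma_z$, which follows because imposing a critical point is $\dim Z$ conditions while $\dim \Gamma_z < \dim Z$; note $\mu \ge \lambda$ follows from $4\mu - \lambda - (n-1) \ge 0$ and $n \ge 3$). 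For~(2), I would use Lemma~\ref{lem:surjrest}(1),(2) on $U_x$ and part~(3) on $\Pi_y$ to conclude that a general $f$ has only nondegenerate (or almost nondegenerate, when $n$ is odd) critical points on $U_x$ and no critical points on $\Pi_y$; the numerical hypotheses $\delta = 6\mu \ge \max\{2,2\lambda,2\mu\}$ and $\ge \max\{3,3\lambda,3\mu\}$ reduce to $\mu \ge \lambda$ and $\mu \ge 1$, again consequences of $4\mu - \lambda \ge n-1 \ge 2$. Proposition~\ref{prop:existresol} then furnishes a universally $\CH_0$-trivial resolution $\varphi \colon Y \to X$ with $\varphi^*\mcM \inj \Omega_Y^{n-1}$. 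When the critical locus of $f$ is positive-dimensional — the case flagged at the end of Section~\ref{sec:outline} — I would instead invoke the machinery of Section~3 (Lemmas~\ref{lem:resol1}, \ref{lem:resol2}, and Remark~\ref{rem:CHtriv}) to build the resolution by a single blowup and check universal $\CH_0$-triviality via the quadric-bundle structure of the exceptional divisor.

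For~(3), the key computation is $H^0(X,\mcM) \ne 0$, equivalently $H^0(Z^{\circ}, \omega_{Z^{\circ}} \otimes (\mcL^{\circ})^2) \ne 0$ after pushing forward along $\tau$ (which has positive ramification, so $\tau_*\mcO_X \supset \mcO_Z$ suffices to pull a nonzero section back). I would compute the dualizing sheaf: $\omega_P$ and $\omega_Q$ are given by the anticanonical formula for toric bundles, $\omega_Q \cong \mcO_Q(-(n-1) - \lambda - 2\mu,\, -1-1-2) = \mcO_Q(-(n-1)-\lambda-2\mu,-4)$, so $\omega_Z \otimes \mcL^2 \cong \mcO_Z\bigl((6\mu) - (n-1) - \lambda - 2\mu,\; 6 - 4\bigr) = \mcO_Z(4\mu - \lambda - (n-1),\, 2)$. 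This has a nonzero global section precisely when $4\mu - \lambda - (n-1) \ge 0$ (and the second coordinate $2 \ge 0$), which is exactly the hypothesis of the theorem. Combining~(2), (3) with Lemma~\ref{lem:totaro} shows $Y$ is not universally $\CH_0$-trivial over $\K$, hence over the residue field of a suitable DVR with fraction field of characteristic~$0$; since a very general $X$ over $\mbC$ is a smooth del Pezzo fibration and smooth fibers admit resolutions trivially, Theorem~\ref{thm:spCH0} lets us conclude a very general complex $X \in |\mcO_P(6\mu,6)|$ is not universally $\CH_0$-trivial, so not stably rational.

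The main obstacle I anticipate is item~(1) combined with the positive-dimensional critical-locus scenario: one must verify that the chosen special $X$ over characteristic~$2$ is genuinely nonsingular along $X \setminus X^{\circ}$ and that, wherever the branch divisor $(f=0)$ fails to have isolated (almost) nondegenerate critical points, the explicit blowup resolutions of Section~3 actually apply — i.e.\ that $f$, after admissible coordinate changes and adding $p$-th powers, can be brought into the normal forms of Conditions~\ref{cond:resolposcrit1} or~\ref{cond:resolposcrit2}. Checking that the genericity conditions extracted from Lemma~\ref{lem:surjrest} are compatible (they pin down the $2$-jet or $3$-jet of $f$ along various strata) and that they are implied by the single numerical inequality $4\mu - \lambda - (n-1) \ge 0$ together with $\lambda \ge 0$, $n \ge 3$, is the bookkeeping that makes or breaks the proof; the differential-form lifting itself is handled uniformly by Proposition~\ref{prop:existresol} and Lemma~\ref{lem:liftM}.
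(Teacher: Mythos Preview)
Your overall strategy matches the paper's exactly: specialize to characteristic~$2$, take $X = (w^2 + f = 0)$, analyze critical points of $f$ on the $\mbP(1,1,2)$-bundle $Z$, invoke Proposition~\ref{prop:existresol} (or the Section~3 blowup machinery for the non-isolated case), compute $\mcM \cong \mcO_X(4\mu-\lambda-(n-1),2)$, and apply Theorem~\ref{thm:spCH0}. Two concrete points, however, are not correct as written.

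First, your claim that ``$\mu \ge \lambda$ follows from $4\mu - \lambda - (n-1) \ge 0$ and $n \ge 3$'' is false. Lemma~\ref{lem:dP1num} allows $\mu < \lambda$, in which case $6\mu = 5\lambda$; e.g.\ $(\lambda,\mu) = (6,5)$ satisfies $4\mu-\lambda = 14 \ge n-1$ for all $n \le 15$. In that regime the hypothesis $\mu_{\text{Lemma}} \ge m\lambda_{\text{Lemma}}$ of Lemma~\ref{lem:surjrest}(3) fails (it becomes $2\mu \ge 2\lambda$), so you cannot conclude surjectivity of $r_2$ on $\Pi_y$ that way. The paper (proof of Lemma~\ref{lem:critdP1case1}) instead writes $f = z^3 + x(y^5 + a y^3 z + b y z^2) + x^2 g$ and checks directly that $(\prt f/\prt x)|_{x=0}$ and $(\prt f/\prt z)|_{x=0} = z^2$ have no common zero on $\Pi_y$. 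You also need to separate $\mu > \lambda$ from $\mu = \lambda$: only in the latter case does $f$ acquire the positive-dimensional critical locus $C_2 = (x = z = a_\mu = 0) \subset \Pi_y$ requiring Condition~\ref{cond:resolposcrit1}.

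Second, your argument for item~(1) --- smoothness of $X$ along $X \setminus X^{\circ}$ --- is circular. You propose to show ``$f$ has no critical point on $\Gamma_z$'' via a dimension count, but $\Gamma_z = (x=y=0)$ is exactly the singular locus of $Z$, so ``critical point of $f$'' is not defined there in the sense of Section~\ref{sec:cyccov}, and Lemma~\ref{lem:surjrest}(4) does not apply since $m = 2$. The paper argues directly in $P$: a general $f$ contains $z^3$ (it has bi-degree $(6\mu,6)$), so $F = w^2 + z^3 + g$ with $g \in (x,y)$; along $(x=y=0) \cap X$ one has $w^2 + z^3 = 0$, forcing $z \ne 0$, whence $\prt F/\prt z = z^2 \ne 0$ there. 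This is a one-line derivative check, not a genericity count.
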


\begin{Cor} \label{cor:dP1}
Let $X$ be as in \emph{Theorem \ref{thm:dP1}} $($without assuming $4 \mu - \lambda - (n-1) \ge 0$$)$.
If either $n \in \{3,4\}$ or $-K_X$ is not ample, then $X$ is not stably rational.
\end{Cor}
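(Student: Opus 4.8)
The plan is to deduce the corollary from Theorem \ref{thm:dP1} by verifying, in each of the two cases, the numerical inequality $4 \mu - \lambda - (n-1) \ge 0$. Throughout, I use that by hypothesis $\pi \colon X \to \mbP^{n-2}$ is a nonsingular del Pezzo fibration; recall that the conditions for a very general $X \in |\mcO_P (6 \mu, 6)|$ to be such a fibration force $\mu \ge 1$ and $0 \le \lambda \le \mu$, apart from the borderline families with $(\lambda, \mu) = (6t, 5t)$, $t \ge 1$, which I address at the end.

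First I would record the anticanonical class. Summing the toric boundary divisors of $P = P_1 (n, \lambda, \mu)$, namely $(u_i = 0) \sim (1,0)$, $(x = 0) \sim (0,1)$, $(y = 0) \sim (\lambda, 1)$, $(z = 0) \sim (2 \mu, 2)$ and $(w = 0) \sim (3 \mu, 3)$, gives $-K_P = \mcO_P (n-1+\lambda+5\mu, 7)$, so adjunction yields
\[
-K_X = (-K_P - X)|_X = \mcO_X (n-1+\lambda-\mu, 1).
\]
Write $F, D$ for the restrictions to $X$ of the generators of $\operatorname{Cl} (P)$, so $-K_X = (n-1+\lambda-\mu) F + D$. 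Since $X$ is a del Pezzo fibration, $\rho (X) = 2$: one extremal ray of $\bNE (X)$ is contracted by $\pi$ and generated by a line $\ell_0$ in a fibre, along which $-K_X$ — being relatively ample over $\mbP^{n-2}$, as part of $X$ being a del Pezzo fibration — satisfies $-K_X \cdot \ell_0 > 0$. The other extremal ray $R$ I would identify as the one generated by a line $\ell$ in the section $\Gamma := X \cap (x = y = 0)$ of $\pi$: because $w^2$ appears in the defining equation of $X$ with non-zero constant coefficient, $\Gamma$ is the reduced section $(w^2 + c z^3 = 0)$, $c$ a non-zero constant, of the $\mbP (2,3)$-subbundle $(x = y = 0) \subset P$, and it lies in $(z \ne 0)$. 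Since $\pi$ maps $\Gamma$ isomorphically onto $\mbP^{n-2}$ we have $F \cdot \ell = 1$, and from $(z = 0)|_X \in |2 \mu F + 2 D|$ together with $(z = 0)|_X \cap \Gamma = \emptyset$ one computes $D \cdot \ell = - \mu$, hence $-K_X \cdot \ell = n - 1 + \lambda - 2 \mu$.

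Now suppose $-K_X$ is not ample. By Kleiman's criterion and $\rho (X) = 2$, this forces $-K_X \cdot \ell \le 0$ (as $-K_X \cdot \ell_0 > 0$), i.e.\ $2 \mu - \lambda - (n-1) \ge 0$; hence
\[
4 \mu - \lambda - (n-1) = 2 \mu + \bigl( 2 \mu - \lambda - (n-1) \bigr) \ge 2 \mu \ge 2 .
\]
If instead $n \in \{3, 4\}$, then using $0 \le \lambda \le \mu$ and $\mu \ge 1$,
\[
4 \mu - \lambda - (n-1) \ge 3 \mu - (n-1) \ge 3 - (n-1) = 4 - n \ge 0 .
\]
In the remaining borderline families, where $(\lambda,\mu) = (6t,5t)$, the line $\ell$ above is replaced by a line in the base curve $(x = z = w = 0) \subset X$, giving $-K_X \cdot \ell' = n - 1 - \mu$; when $n \le 4$ one has $4\mu-\lambda-(n-1) = 14t-(n-1) > 0$ directly, and when $-K_X$ is not ample one gets $\mu \ge n-1$, hence again $4\mu-\lambda-(n-1) = 14t-(n-1) \ge 9t > 0$. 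In every case $4 \mu - \lambda - (n-1) \ge 0$, so Theorem \ref{thm:dP1} applies and $X$ is not stably rational.

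The step I expect to be the crux is the determination of $\bNE (X)$ in the second paragraph — identifying the second extremal contraction of $X$ and computing the intersection numbers of the section $\Gamma$ (or of the associated base curve in the borderline cases) — together with pinning down precisely the numerical range in which a very general $X \in |\mcO_P (6 \mu, 6)|$ is a nonsingular del Pezzo fibration of degree $1$. Granting these two facts, the displayed inequalities, and hence the corollary, follow mechanically from Theorem \ref{thm:dP1}.
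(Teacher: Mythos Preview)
Your overall strategy---reduce to the inequality $4\mu - \lambda - (n-1) \ge 0$ and then invoke Theorem \ref{thm:dP1}---is exactly the paper's, and your computation of $-K_X$, the case split according to whether $\lambda \le \mu$ or $(\lambda,\mu)=(6t,5t)$, and the direct verification for $n \in \{3,4\}$ all match Lemma \ref{lem:dP1ample}. The difference is in how the inequality is extracted from non-ampleness of $-K_X$: you work on the \emph{curve} side, attempting to identify the second extremal ray of $\bNE(X)$ as the class of a line $\ell$ in the section $\Gamma = X \cap (x=y=0)$ (or in $(x=z=w=0)$ in the borderline case), whereas the paper works on the dual \emph{divisor} side, observing that $|\mcO_P(6\mu,6)|$ (resp.\ $|\mcO_P(6\lambda,6)|$ when $\lambda > \mu$) is base-point-free, so that $\mu F + D$ (resp.\ $\lambda F + D$) is nef on $X$; since $\rho(X)=2$ this makes every $\alpha F + D$ with $\alpha > \mu$ (resp.\ $\alpha > \lambda$) ample, and the inequality drops out by contraposition.

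The crux you flag is a genuine gap as your argument stands. The implication ``$-K_X$ not ample and $-K_X\cdot\ell_0 > 0$ force $-K_X\cdot\ell \le 0$'' is valid only once $[\ell_0]$ and $[\ell]$ are known to span $\bNE(X)$, and your (correct) intersection-number computations do not establish that $[\ell]$ is extremal. The paper's approach closes this cheaply: once $\mu F + D$ is nef, your curve $\ell$---for which you computed $(\mu F + D)\cdot\ell = \mu - \mu = 0$---is automatically on the boundary of $\bNE(X)$, hence extremal, and the two arguments become equivalent. So rather than searching for the second extremal contraction, just note base-point-freeness of the relevant linear system on $P$ to certify nef-ness of $\mu F + D$ (or $\lambda F + D$); your displayed inequalities then go through verbatim, and in fact give the slightly sharper bound $2\mu \ge \lambda + (n-1)$ in the main case (the paper records only $3\mu \ge \lambda + (n-1)$, which already suffices).
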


\begin{Lem} \label{lem:dP1num}
Suppose that the ground field is $\mbC$.
Let $X$ be a general member of $|\mcO_P (6 \mu,6)|$ and suppose that $\pi \colon X \to \mbP^{n-2}$ is a nonsingular del Pezzo fibration.
Then the following hold.
\begin{enumerate}
\item $\mu > 0$.
\item If $\mu < \lambda$, then $6 \mu = 5 \lambda$.
\end{enumerate}
\end{Lem}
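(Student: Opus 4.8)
The plan is to show that, whenever the asserted inequalities fail, a general member $X\in|\mcO_P(6\mu,6)|$ is singular along a positive‑dimensional subvariety (or else has relative Picard number $>1$), contradicting the hypothesis that $\pi\colon X\to\mbP^{n-2}$ is a nonsingular del Pezzo fibration. Everything is governed by the section $L:=(x=z=w=0)\subset P$. I would first record that $L\cong\mbP^{n-2}$ (on $L$ one has $y\ne 0$, so normalising $y=1$ leaves only $[u_0:\dots:u_{n-2}]$) and that $L$ lies in the smooth locus of $P$, because $P$ is singular only along $(x=y=0)$ --- the fibrewise singular locus of the $\mbP(1,1,2,3)$‑bundle --- and $L\cap(x=y=0)=\emptyset$. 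Hence $\mcO_P(6\mu,6)$ is invertible near $L$, and in the affine chart $(u_i\ne 0)\cap(y\ne 0)$ the images of $x,z,w$ are coordinates transverse to $L$; so $X=(F=0)$ is singular at every point of $L$ where the differential $dF$ vanishes, hence along all of $L$ whenever $F$ vanishes to order $\ge 2$ along $L$. Making this transverse‑jet reduction precise in that chart is the only point needing care; the rest is elementary bookkeeping.

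A monomial $u^Ix^ay^bz^cw^e$ of bidegree $(6\mu,6)$ satisfies $a+b+2c+3e=6$ and $|I|=\mu(a+b)-\lambda b\ge 0$. Such a monomial lies outside $\mcI_L$ exactly when $a=c=e=0$ (then $b=6$ and $|I|=6(\mu-\lambda)$), and it lies in $\mcI_L\setminus\mcI_L^2$ exactly when $a+c+e=1$: the case $(a,c,e)=(1,0,0)$ gives the monomials $u^Ixy^5$ with $|I|=6\mu-5\lambda$, while $(a,c,e)=(0,1,0)$ and $(0,0,1)$ require $|I|=4(\mu-\lambda)\ge 0$ and $|I|=3(\mu-\lambda)\ge 0$ respectively. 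Since a general $F$ is a general linear combination of the admissible monomials, the presence of these low‑order monomials is exactly what controls the singularities of $X$ along $L$.

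For (1): if $\mu=\lambda=0$ then $P=\mbP^{n-2}\times\mbP(1,1,2,3)$ and $X=\mbP^{n-2}\times S$ with $S$ a general del Pezzo surface of degree $1$, so $\rho(X/\mbP^{n-2})=\rho(S)=9>1$ and $\pi$ is not a del Pezzo fibration. Otherwise $\mu\le 0$ and $(\lambda,\mu)\ne(0,0)$ force $\mu<\lambda$ (recall $\lambda\ge 0$), whence $6(\mu-\lambda)<0$, $6\mu-5\lambda<0$ (equality would give $\mu=\lambda=0$), $4(\mu-\lambda)<0$ and $3(\mu-\lambda)<0$; thus every monomial of bidegree $(6\mu,6)$ lies in $\mcI_L^2$, so $F$ vanishes to order $\ge 2$ along $L\cong\mbP^{n-2}$ and $X$ is singular along $L$ --- contradicting nonsingularity. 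Hence $\mu>0$.

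For (2): assume $\mu<\lambda$. Then $6(\mu-\lambda)$, $4(\mu-\lambda)$ and $3(\mu-\lambda)$ are negative, so the only monomials of bidegree $(6\mu,6)$ that can lie outside $\mcI_L^2$ are the $u^Ixy^5$ with $|I|=6\mu-5\lambda$. If $6\mu<5\lambda$ these are absent too and, as in (1), $X$ is singular along $L$ --- a contradiction. If $6\mu>5\lambda$, set $d:=6\mu-5\lambda\ge 1$; then modulo $\mcI_L^2$ a general defining equation reads $F\equiv\ell(u)\,xy^5$, where $\ell=\sum_{|I|=d}c_Iu^I$ is a general form of degree $d$ in $u_0,\dots,u_{n-2}$. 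On $L$ the only partial derivative of $F$ not vanishing identically is $\partial F/\partial x$, which equals $\ell(u)y^5$ up to a unit and hence vanishes precisely on $L\cap(\ell=0)$; this is a nonempty hypersurface in $L\cong\mbP^{n-2}$ (as $d\ge 1$ and $n\ge 3$), so $X$ is singular along it, again contradicting nonsingularity. Therefore $6\mu=5\lambda$, as claimed.
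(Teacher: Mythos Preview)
Your proof is correct and follows essentially the same approach as the paper: both arguments analyze the behavior of a general defining polynomial $F$ along the section $L=(x=z=w=0)\cong\mbP^{n-2}$ and show that, when the numerical conditions fail, $X$ is singular along $L$ or a nonempty hypersurface in $L$ (with the trivial product case handled separately). Your presentation is somewhat more systematically organized---you work uniformly with the $\mcI_L$-filtration and a single monomial bookkeeping formula $|I|=\mu(a+b)-\lambda b$---whereas the paper treats the subcases $\mu<0$, $\mu=0$, and $\mu<\lambda$ by separate ad hoc normal forms (e.g.\ writing $F=xg+w^2+z^3$ when $\mu<\lambda$); but the underlying computations and conclusions are the same.
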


\begin{proof}
Let $F = F (u,x,y,z,w)$ be a defining polynomial of $X$.

We prove (1).
Suppose that $\mu < 0$.
Then $F$ is a polynomial in variables $z,w$ and $X$ is clearly singular.
This is a contradiction and we have $\mu \ge 0$.
Suppose that $\mu = 0$.
If $\lambda > 0$, then $F$ does not involve the variable $y$.
This implies that $X$ is singular along $(x = z = w = 0) \subset X$.
Thus $\lambda = 0$ and $P = P_1 (n,0,0)$ is the direct product $\mbP^{n-2} \times \mbP (1,1,2,3)$ and $X$ is isomorphic to $\mbP^{n-2} \times S$, where $S$ is a (smooth) del Pezzo surface of degree $1$.
This implies $\rho (X) > 2$ and $X$ is not a Mori fiber space.
This is a contradiction and (1) is proved. 

We prove (2).
Suppose that $\mu < \lambda$.
Then we can write
\[
F = x g (u_0,u_1,x,y,z,w) + w^2 + z^3,
\]
for some polynomial $g$ of bi-degree $(6 \mu,5)$.
If $5 \lambda > 6 \mu$, then $g$ does not contain a monomial divisible by $y^5$, which implies that $X$ is singular along $(x = z = w = 0)$.
If $5 \lambda < 6 \mu$, then the terms in $g$ divisible by $y^5$ can be written as $a (u_0,u_1) y^5$, where $a (u_0,u_1)$ is homogeneous of degree $6 \mu - 5 \lambda > 0$.
But then $X$ is singular at any point of $(a = x = z = w = 0) \subset P$ which is non-empty.
Thus $6 \mu = 5 \lambda$ and the proof is completed.
\end{proof}

\begin{Lem} \label{lem:dP1ample}
Let the notation and assumption as in \emph{Lemma \ref{lem:dP1num}}.
\begin{enumerate}
\item If $-K_X$ is not ample, then $3 \mu \ge \lambda + n-1$.
\item If $n \in \{3,4\}$, then $4 \mu - \lambda - (n-1) \ge 0$.
\end{enumerate}
\end{Lem}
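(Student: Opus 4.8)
The plan is to compute the anticanonical class of $X$ explicitly and then read off from the (explicitly known) nef cone of the ambient toric bundle $P=P_1(n,\lambda,\mu)$ whether it is ample. Since $P$ is a projective simplicial toric variety, $-K_P$ is the sum of the torus-invariant prime divisors; using the bidegrees $(u_i=0)\sim\mcO_P(1,0)$, $(x=0)\sim\mcO_P(0,1)$, $(y=0)\sim\mcO_P(\lambda,1)$, $(z=0)\sim\mcO_P(2\mu,2)$, $(w=0)\sim\mcO_P(3\mu,3)$ recorded in Section 2, this gives $-K_P\cong\mcO_P(n-1+\lambda+5\mu,\,7)$. A general member $X\in|\mcO_P(6\mu,6)|$ is disjoint from $\Sing P$, which lies in $(x=y=0)$, so adjunction yields
\[
-K_X\cong\mcO_X(n-1+\lambda-\mu,\,1).
\]

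For part (1) the crucial observation is that only the trivial direction is needed: the restriction to $X$ of an ample class on $P$ is ample. Hence, if $-K_X$ is not ample, then the class $(n-1+\lambda-\mu,1)$ cannot lie in the interior of $\operatorname{Nef}(P)$. By the description of the nef cone recalled in Section 2, $\operatorname{Nef}(P)$ is spanned by $(1,0)$ and $A$, where $A=(\mu,1)$ when $\mu\ge\lambda$ and $A=(\lambda,1)$ when $\mu<\lambda$; in the second case Lemma \ref{lem:dP1num} forces $6\mu=5\lambda$. As the second coordinate of $-K_X$ equals $1>0$, failure to be interior to $\operatorname{Nef}(P)$ means $n-1+\lambda-\mu\le\mu$ in the first case and $n-1+\lambda-\mu\le\lambda$ in the second. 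The first gives $2\mu\ge n-1+\lambda$, whence $3\mu\ge 2\mu\ge n-1+\lambda$; the second gives $\mu\ge n-1$, and together with $6\mu=5\lambda$ this yields $3\mu-\lambda=9\mu/5\ge 9(n-1)/5\ge n-1$, so again $3\mu\ge n-1+\lambda$.

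For part (2), Lemma \ref{lem:dP1num} gives $\mu\ge 1$ and either $\mu\ge\lambda$ or $6\mu=5\lambda$. If $\mu\ge\lambda$, then $4\mu-\lambda\ge 3\mu\ge 3\ge n-1$ for $n\in\{3,4\}$, so $4\mu-\lambda-(n-1)\ge 0$. If $6\mu=5\lambda$, then $5\mid\mu$, so $(\mu,\lambda)=(5k,6k)$ for an integer $k\ge 1$, and $4\mu-\lambda-(n-1)=14k-(n-1)\ge 14-3>0$.

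Everything is routine once $-K_X$ has been pinned down, so I do not expect a genuine obstacle. The only points requiring attention are to track which ray of $\operatorname{Nef}(P)$ is in play — governed by the dichotomy $\mu\ge\lambda$ versus $6\mu=5\lambda$ of Lemma \ref{lem:dP1num} — and to notice that the hypothesis ``$-K_X$ is not ample'' permits us to use only the easy implication in the ampleness comparison between $X$ and $P$.
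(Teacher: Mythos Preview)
Your proof is correct and follows essentially the same line as the paper's: compute $-K_X\sim(n-1+\lambda-\mu)F+D$ by adjunction, compare against the nef cone determined by $\max(\lambda,\mu)$, and split according to whether $\mu\ge\lambda$ or $6\mu=5\lambda$. The one minor difference is that for (1) you argue via ``ample on $P$ restricts to ample on $X$'', whereas the paper works directly on $X$ using $\rho(X)=2$; your version is slightly cleaner since it avoids invoking the Picard rank, and for (2) your direct argument is a bit tidier than the paper's proof by contradiction.
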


\begin{proof}
Take $F_P \in |\mcO_P (1,0)|$, $D_P \in |\mcO_P (0,1)|$, and set $F = F_P|_X$, $D = D_P|_X$.
By adjunction, we have $-K_X \sim (n-1 + \lambda - \mu) F + D$. 

We prove (1).
Suppose that $\lambda \le \mu$.
Then the complete linear system $|\mcO_P (6 \mu,6)|$ is base point free.
This implies that $\mu F + D$ is nef.
Since $\rho (X) = 2$, a divisor $\alpha F + D$ is ample if $\alpha > \mu$, and the assertion follows immediately.
Suppose that $\lambda > \mu$.
Then $|\mcO_P (6 \lambda,6)|$ is base point free.
This implies that $\lambda F + D$ is nef.
It follows that a divisor $\alpha F + D$ is ample if $\alpha > \lambda$, and we obtain the inequality $n-1 \le \mu$.
Combining this inequality and $6 \mu = 5 \lambda$, it is easy to check that the inequality $3 \mu \ge \lambda + n-1$ holds.

We prove (2).
Suppose to the contrary that $4 \mu - \lambda - (n-1) < 0$.
If $\mu \ge \lambda$, then $3 \lambda < n-1 \le 3$ and hence $\lambda = 0$.
But then $4 \mu < n-1 \le 3$.
This is a contradiction since $\mu > 0$ by Lemma \ref{lem:dP1num}.
If $\mu < \lambda$, then $6 \mu = 5 \lambda$ and we have $(7/3) \lambda < n-1 \le 3$.
This implies $\lambda \le 1$.
This is again a contradiction since $6 \mu = 5 \lambda$ and $\mu > 0$, and we obtain the desired inequality.
\end{proof}

In the rest of the present section, we assume that $(n,\lambda,\mu)$ satisfies the assumption of Lemma \ref{lem:dP1num} and we keep the following setting.

\begin{Setting} \label{setting:dP1}
\begin{itemize}
\item The ground field $\K$ is algebraically closed and $\chara (\K) = 2$.
\item $X$ is a hypersurface of bi-degree $(6 \mu,6)$ in $P = P_1 (n,\lambda,\mu)$ defined by
\[
F := w^2 + f (u,x,y,z) = 0,
\]
where $f (u,x,y,z)$ is a general polynomial of bi-degree $(6 \mu,6)$.
\item $Z$ the $\mbP (1,1,2)$-bundle over $\mbP^{n-2}$ defined by 
\[
\begin{pmatrix}
u_0 & \cdots & u_{n-2} & & x & y & z \\
1 & \cdots & 1 &  | & 0 & \lambda & 2 \mu \\
0 & \cdots & 0 &  | & 1 & 1 & 2
\end{pmatrix}.
\]
\item $\mcL := \mcO_Z (3 \mu,3)$,
\item $\tau \colon X \to Z$ is the restriction of the projection $P \ratmap Z$, which is the double cover of $Z$ branched along $f \in H^0 (Z,\mcL^2)$.
\item $Z^{\circ} := Z \setminus (x = y = 0)$ is the smooth locus of $Z$, $X^{\circ} = \tau^{-1} (Z^{\circ})$ and $\tau^{\circ} = \tau|_{X^{\circ}}$.
\item $\mcM^{\circ}$ is the invertible sheaf associated to $\tau^{\circ}$ and $\mcM = \iota_* \mcM^{\circ}$, where $\iota \colon X^{\circ} \inj X$.
\end{itemize}
\end{Setting}

We define 
\[
U_x = (x \ne 0) \subset Z \quad \text{and} \quad \Pi_y = (x = 0) \cap (y \ne 0) \subset Z,
\]
so that we have $Z^{\circ} = U_x \cup \Pi_y$.
We will show that $X$ admits a universally $\CH_0$-trivial resolution $\varphi \colon Y \to X$ of singularities such that $\varphi^*\mcM \inj \Omega_Y^{n-1}$.
To do so we need to analyze the critical points of $f \in H^0 (Z^{\circ},\mcL^2)$, which will be done in the following subsections.

\subsection{Case $\lambda \ne \mu$} \label{sec:dP1-1}

\begin{Lem} \label{lem:critdP1case1}
A general $f \in H^0 (Z,\mcL^2)$ has only $($almost$)$ nondegenerate critical points on $Z^{\circ}$ and $X$ is nonsingular along $X \setminus X^{\circ}$.
\end{Lem}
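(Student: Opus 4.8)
The plan is to analyze $\Crit (f)$ on each of the two strata $U_x$ and $\Pi_y$ of $Z^{\circ} = U_x \sqcup \Pi_y$, and then to treat $X \setminus X^{\circ}$ by a direct local computation. On $U_x$ I would simply invoke Lemma~\ref{lem:surjrest} and the Remark following it: since $\mcL^2 = \mcO_Z (6 \mu, 6)$ and $\dim Z = n$, I would apply Lemma~\ref{lem:surjrest}(1) when $n$ is even and Lemma~\ref{lem:surjrest}(2) when $n$ is odd, taking $d = 6$, $m = 2$ (so $d \ge 3m$) and noting that the first-degree weight of $z$ in $Z$ is $2 \mu$. The resulting inequalities (e.g.\ $6 \mu \ge 2 \lambda$ and $6\mu \ge 4\mu$, resp.\ $6 \mu \ge 3 \lambda$) are immediate from $\mu \ge 1$ (Lemma~\ref{lem:dP1num}(1)) together with the dichotomy $\lambda < \mu$ or $6 \mu = 5 \lambda$ (Lemma~\ref{lem:dP1num}(2)). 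By the Remark, a general $f$ then has only nondegenerate critical points on $U_x$ (only almost nondegenerate ones when $n$ is odd).

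For $\Pi_y$ I would split into two cases. If $\lambda < \mu$, then $2 \mu \ge 2 \lambda$ and $6 \mu \ge \max \{6 \lambda + 1, 4 \lambda + 2 \mu\}$, so Lemma~\ref{lem:surjrest}(3) applies and, by the Remark, a general $f$ has no critical point on $\Pi_y$. If $\lambda > \mu$, then $6 \mu = 5 \lambda$ by Lemma~\ref{lem:dP1num}(2); now $|\mcO_Z (6 \mu, 6)|$ acquires base points on $\Pi_y$, Lemma~\ref{lem:surjrest}(3) is no longer available, and I would argue directly. A bi-degree count shows that $z^3$ is, up to a scalar, the unique monomial of bi-degree $(6 \mu, 6)$ not divisible by $x$, while $x y^5$ is also a monomial of this bi-degree. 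Hence for a general $f$ one can write $f = c z^3 + x g$ with $c$ a nonzero constant and with the coefficient $c_1$ of $y^5$ in $g|_{(x = z = 0)}$ a nonzero constant. In a standard affine chart of $Z^{\circ}$ one then computes $\prt f / \prt z|_{(x = 0)} = c z^2$, which vanishes exactly along $\Gamma_y := (x = z = 0) \subset \Pi_y$, while $\prt f / \prt x|_{\Gamma_y} = c_1 \ne 0$; so $f$ has no critical point on $\Pi_y$.

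Finally, the same bi-degree count shows that $f$ restricted to $(x = y = 0) \subset P$ is a nonzero multiple of $z^3$, so $X \cap (x = y = 0)$ is the section $w^2 + c z^3 = 0$ of the $\mbP (2, 3)$-bundle $(x = y = 0) \subset P$, along which $z$ and $w$ are both nonzero; this section is therefore disjoint from $\Sing (P) = (x = y = w = 0) \cup (x = y = z = 0)$. Near a point of this section $P$ is smooth and, in characteristic $2$, the differential of the defining polynomial $w^2 + f$ of $X$ equals $d f$; a computation in the chart $(u_i \ne 0) \cap (w \ne 0)$ shows $d f \ne 0$ there, again because $f$ restricts to $c z^3$ with $c \ne 0$. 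Hence $X$ is nonsingular along $X \setminus X^{\circ}$. I expect the case $\lambda > \mu$ of the $\Pi_y$ analysis to be the step requiring real care: there the branch divisor $(f = 0)$ always contains $\Gamma_y$, so the restriction-map criterion does not apply, and one must verify by hand that $\Gamma_y$ nevertheless lies in the smooth locus of the branch divisor — which is precisely what the presence of the monomial $x y^5$ guarantees.
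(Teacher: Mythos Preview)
Your proof is correct and follows essentially the same route as the paper: use Lemma~\ref{lem:surjrest} on $U_x$ (and on $\Pi_y$ when $\lambda<\mu$), handle $\Pi_y$ in the case $\lambda>\mu$ by the direct computation with the monomials $z^3$ and $xy^5$, and conclude smoothness along $X\setminus X^{\circ}$ from $\partial F/\partial z = c z^2 \ne 0$ there. The only cosmetic point is that the chart $(w\ne 0)$ is not literally an affine space (since $w$ has weight $3$), but your computation is really the Cox-coordinate Jacobian check $\partial F/\partial z\ne 0$, which is exactly what the paper does.
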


\begin{proof}
We have $6 \mu \ge \max \{3, 3 \lambda, 3 \mu\}$ by Lemma \ref{lem:dP1num}.
By Lemma \ref{lem:surjrest}, the 4th restriction map of $\mcL^2$ is surjective at any point $\msp \in U_x$.
Thus a general $f \in H^0 (Z,\mcL^2)$ has only (almost) nondegenerate critical points on $U_x$.

We consider critical points on $\Pi_y$.
We first consider the case $\mu > \lambda$.
In this case, $6 \mu \ge \max \{ 6 \lambda + 1, 4 \lambda + 2 \mu \}$.
By Lemma \ref{lem:surjrest}, the 2nd restriction map of $\mcL^2$ is surjective at any $\msp \in \Pi_y$ and hence a general $f \in H^0 (Z,\mcL^2)$ does not have a critical point on $\Pi_y$.
We next consider the case $\mu < \lambda$.
In this case $6 \mu = 5 \lambda$ and we can write
\[
f = z^3 + x (y^5 + a_{4 \mu - 3 \lambda} y^3 z + b_{2 \mu - \lambda} y z^2) + x^2 g (u,x,y,z),
\] 
where $g (u,x,y,z)$ is of bi-degree $(6 \mu, 4)$.
It is easy to see that if $f$ has a critical point at $\msp \in \Pi_y$, then
\[
\msp \in \left( x = \frac{\prt f}{\prt x} = \frac{\prt f}{\prt z} = 0 \right) \cap \Pi_y.
\]
But the above set is clearly empty and thus $f$ cannot have a critical point along $\Pi_y$.
Therefore the first assertion is proved.

We prove the latter assertion. 
We see that $X$ is defined by an equation
\[
F := w^2 + z^3 + g (u,x,y,z,w) = 0,
\]
where $g \in (x,y)$.
We have 
\[
X \setminus X^{\circ} = (x = y = 0) \cap X = (x = y = w^2 + z^3 = 0),
\]
which implies that $z$ does not vanish at any point $\msp \in X \setminus X^{\circ}$.
Thus $X$ is nonsingular along $X \setminus X^{\circ}$ since $\prt F/\prt z = z^2 \ne 0$ along $X \setminus X^{\circ}$.
\end{proof}


\subsection{Case $\lambda = \mu$} \label{sec:dP1-2}

\begin{Lem} \label{lem:critdP1case2}
A general $f \in H^0 (Z,\mcL^2)$ has only $($almost$)$ nondegenerate critical points on $U_x$ and $X$ is nonsingular along $X \setminus X^{\circ}$.
\end{Lem}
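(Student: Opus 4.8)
The plan is to mimic the structure of the proof of Lemma~\ref{lem:critdP1case1}, splitting $Z^{\circ} = U_x \cup \Pi_y$ and analyzing critical points of a general $f \in H^0(Z,\mcL^2)$ on each piece, then separately verifying smoothness of $X$ along $X \setminus X^{\circ}$. First I would dispose of $U_x$: since $\lambda = \mu$ we still have $6\mu \ge \max\{3, 3\lambda, 3\mu\}$, so Lemma~\ref{lem:surjrest}(2) gives surjectivity of the 4th restriction map of $\mcL^2$ at every $\msp \in U_x$, whence a general $f$ has only (almost) nondegenerate critical points there. This part should be essentially identical to the $U_x$ argument in Lemma~\ref{lem:critdP1case1}, so I would state it briefly.

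The main obstacle is the behavior along $\Pi_y$. When $\lambda = \mu$ the numerical inequality $\delta \ge \max\{d\lambda+1,(d-m)\lambda+\mu\}$ needed for Lemma~\ref{lem:surjrest}(3) becomes $6\mu \ge \max\{6\lambda+1, 4\lambda+2\mu\} = 6\mu + 1$, which \emph{fails}; so unlike in the case $\mu > \lambda$ we cannot simply conclude that $f$ has no critical point on $\Pi_y$. I expect that in fact $f$ genuinely has critical points along a positive-dimensional locus here — this is precisely the ``difficult case'' flagged at the end of Section~\ref{sec:outline}, to be handled not by Proposition~\ref{prop:existresol} but by the explicit resolution results Lemmas~\ref{lem:resol1}/\ref{lem:resol2} via Conditions~\ref{cond:resolposcrit1}/\ref{cond:resolposcrit2}. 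So the heart of the proof is to write down, in a suitable affine chart around a point of $\Pi_y$ (say $\msp = (1\!:\!0\!:\!\cdots\!:\!0; 0\!:\!1\!:\!0)$ after a coordinate change, legitimate since $\lambda = \mu$), the local form of a general $f$ and show it satisfies Condition~\ref{cond:resolposcrit1} with $p=2$: i.e.\ after subtracting a square, $f \sim_2 a x + b x^2 + c x y + y^3 + g$ with $a$ cutting out a smooth hypersurface in the $u$-variables, $g \in (x,y)^3$ with the stated divisibility by $y^3x$ or $y^4$, and $\Crit(f) = \Xi = (x = y = a = 0)$ near $\msp$. Granting this, Lemma~\ref{lem:resol1} furnishes a resolution with exceptional divisor $S \times \Sigma$, and Remark~\ref{rem:CHtriv} makes it universally $\CH_0$-trivial — though strictly the present lemma only asserts the statement about critical points and smoothness of $X$ off $X^{\circ}$, so I would confine myself to the critical-point count here.

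The smoothness of $X$ along $X \setminus X^{\circ}$ I would handle exactly as in Lemma~\ref{lem:critdP1case1}: write $X = (w^2 + f = 0)$ with $f$ a general polynomial of bi-degree $(6\mu,6)$; since $\lambda = \mu > 0$ (by Lemma~\ref{lem:dP1num}(1)), $f$ contains the monomial $z^3$ with nonzero coefficient and every other monomial lies in $(x,y)$, so $F = w^2 + z^3 + (\text{terms in }(x,y))$; then $X \setminus X^{\circ} = (x = y = w^2 + z^3 = 0)$ forces $z \ne 0$, and $\prt F/\prt z = z^2 \ne 0$ there gives smoothness. The one point requiring a little care is confirming that $z^3$ really survives with nonzero coefficient when $\lambda = \mu$ — i.e.\ that $(0,\dots,0,0,0,3)$ is an admissible exponent vector of bi-degree $(6\mu,6)$ — which is immediate since $3 \cdot 2\mu = 6\mu$ and $3 \cdot 2 = 6$. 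So the real work is the $\Pi_y$ analysis; the rest is bookkeeping parallel to the $\lambda \ne \mu$ case.
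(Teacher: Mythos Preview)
Your proposal does more than the lemma asks. Read the statement again: it claims only that a general $f$ has (almost) nondegenerate critical points \emph{on $U_x$} --- not on all of $Z^{\circ}$ --- together with smoothness of $X$ along $X \setminus X^{\circ}$. The entire $\Pi_y$ discussion you sketch is not part of this lemma; in the paper that analysis is deferred to Lemma~\ref{lem:dP1critloc} and the paragraphs immediately after it, where the critical locus on $\Pi_y$ is identified as $C_2 = (x = z = a_\mu = 0)$ and then shown (in local charts $U_{i,y}$) to satisfy Condition~\ref{cond:resolposcrit1}. Your instinct that $\Pi_y$ carries a positive-dimensional critical locus requiring Lemma~\ref{lem:resol1} is exactly right, but that work belongs to the next step, not to the present lemma.

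Once the $\Pi_y$ material is stripped away, what remains of your proposal --- the $U_x$ argument via $6\mu \ge \max\{3,3\lambda,3\mu\}$ and Lemma~\ref{lem:surjrest}(2), and the smoothness argument via $\partial F/\partial z = z^2 \ne 0$ along $(x=y=0)$ --- is correct and is precisely what the paper does. Indeed the paper's own proof is a single line referring back to Lemma~\ref{lem:critdP1case1} and the inequality $6\mu \ge \max\{3,3\lambda,3\mu\}$.
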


\begin{proof}
This can be proved in the same way as Lemma \ref{lem:critdP1case2} since $6 \mu \ge \max \{3, 3 \lambda, 3 \mu\}$.
\end{proof}

Let $\Crit (f) \subset Z^{\circ}$ be the set of critical points of $f$ on $Z^{\circ}$.
Lemma \ref{lem:critdP1case2} shows that $C_1 := \Crit (f) \cap U_x$ consists of (almost) nondegenerate critical points.
We need to consider $\Crit (f) \setminus C_1$, or in other words, the critical points of $f$ on $\Pi_y$.
We can write
\[
f = \alpha y^6 + \beta y^4 z + \gamma y^2 z^2 + z^3 + x (a_{\mu} y^5 + b_{\mu} y^3 z + c_{\mu} y z^2) + x^2 g, 
\]
where $\alpha, \beta, \gamma \in \K$, $a_{\mu},b_{\mu},c_{\mu} \in \K [u]$ are of degree $\mu$ and $g = g (u,x,y,z)$ is of bi-degree $(6 \mu,4)$.
Replacing $z$, we may assume that $\beta = 0$.

\begin{Lem} \label{lem:dP1critloc}
We have 
\[
\Crit (f) = C_1 \cup C_2,
\]
where $C_2 = (x = z = a_{\mu} = 0) \subset Z$.
\end{Lem}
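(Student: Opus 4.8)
The plan is to identify $\Crit(f)$ on $Z^{\circ}$ by combining what is already known on $U_x$ with a direct computation on $\Pi_y$. By Lemma \ref{lem:critdP1case2} we already know that $\Crit(f) \cap U_x = C_1$ consists of (almost) nondegenerate critical points, so it remains to compute $\Crit(f) \cap \Pi_y$ and show it equals $C_2 = (x = z = a_{\mu} = 0)$. I would work on the affine chart $(y \ne 0)$ of $Z$, where the remaining coordinates are $u_1,\dots,u_{n-2}$ (the affine coordinates on $\mbP^{n-2}$), $x$, and $z$, and the local generator of $\mcL^2$ is a unit times $y^6$; by Remark \ref{rem:critunit} the critical locus is unchanged if we multiply by such a unit, so we may simply study the dehomogenization $\bar{f}(u,x,z) = f(u,x,1,z)$.

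Writing out $\bar f = \alpha + \gamma z + z^3 + x(a_{\mu} + b_{\mu} z + c_{\mu} z^2) + x^2 \bar g$ (recall $\beta = 0$ after the coordinate change, and here $a_\mu,b_\mu,c_\mu,\bar g$ are now viewed as functions of the affine coordinates $u$), a point of $\Pi_y$ corresponds to $x = 0$ and this chart. The key observation is that $\chara(\K) = 2$, so $\prt z^3/\prt z = 3z^2 = z^2$ and $\prt(\gamma z)/\prt z = \gamma$. A critical point in this chart satisfies $\prt \bar f/\prt x = 0$, $\prt \bar f/\prt z = 0$, and $\prt \bar f/\prt u_i = 0$ for all $i$. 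At a point with $x = 0$ these become: $\prt \bar f/\prt x|_{x=0} = a_{\mu} + b_{\mu} z + c_{\mu} z^2 = 0$; $\prt \bar f/\prt z|_{x=0} = \gamma + z^2 = 0$ (using $\chara = 2$), forcing $z^2 = \gamma$, hence $z = \sqrt{\gamma}$ is determined; and $\prt \bar f/\prt u_i|_{x=0} = \prt \alpha/\prt u_i + z\,\prt\gamma/\prt u_i + \cdots$, but since $\alpha,\gamma \in \K$ are constants these derivatives vanish identically. So on $\Pi_y$ the critical equations reduce to $z^2 = \gamma$ together with $a_{\mu} + b_{\mu} z + c_{\mu} z^2 = 0$. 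I would then invoke genericity of $f$: for general coefficients we may take $\gamma = 0$ (it can be absorbed or is generically zero after a suitable normalization — one should check the precise normalization available, but a completion-of-cube type substitution in $z$, legal when we allow $\mu$-graded coefficients, removes the $z$ and $z^2$ coefficients in the degree-$6$ part), so $z = 0$, and then the condition collapses to $a_{\mu}(u) = 0$. Thus $\Crit(f) \cap \Pi_y = (x = 0)\cap(z = 0)\cap(a_{\mu} = 0) = C_2$.

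The main obstacle I anticipate is justifying the normalization that kills the unwanted terms (the $\gamma y^2 z^2$ and any $x$-times-$z^2$ type contributions) cleanly, i.e.\ verifying that after the coordinate changes already made ($\beta = 0$) one can further arrange, for general $f$, that the only surviving relevant obstruction on $\Pi_y$ is $a_{\mu} = 0$ — in particular that $\gamma$ can be assumed $0$ and that the higher-order terms in $g$ (and in $b_\mu, c_\mu$) genuinely do not contribute extra components to the critical locus. This amounts to checking that the system $\{\prt \bar f/\prt u_i = \prt \bar f/\prt x = \prt \bar f/\prt z = 0, x = 0\}$ has solution set exactly $C_2$ for general choices, which is a transversality/dimension count: the equation $z^2 = \gamma$ (generically $z = 0$) is one condition cutting down from the fiber direction, and $a_\mu = 0$ is one condition on the base $\mbP^{n-2}$, giving the expected codimension-$2$ locus $C_2 \subset \Pi_y$; the $u_i$-derivative equations are automatically satisfied there because the degree-$6$ part has constant coefficients after normalization. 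I would also remark that this is consistent with $C_2$ having codimension $3$ in $Z$ (codimension $2$ in $\Pi_y$, which is itself a divisor away from $(x=y=0)$), matching the shape of the critical loci appearing in Conditions \ref{cond:resolposcrit1} and \ref{cond:resolposcrit2} that the next subsections will need.
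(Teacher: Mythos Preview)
Your overall strategy is right, but you made a computational slip in the dehomogenization that created an artificial difficulty. On the chart $(u_0 \ne 0) \cap (y \ne 0)$, setting $u_0 = y = 1$ in $f = \alpha y^6 + \gamma y^2 z^2 + z^3 + x(a_\mu y^5 + b_\mu y^3 z + c_\mu y z^2) + x^2 g$ gives
\[
\bar f = \alpha + \gamma z^2 + z^3 + x(a_\mu + b_\mu z + c_\mu z^2) + x^2 \bar g,
\]
not $\alpha + \gamma z + z^3 + \cdots$ (the original term is $\gamma y^2 z^2$, with $z$ of weight $2$). In characteristic $2$ we then have $\partial(\gamma z^2)/\partial z = 2\gamma z = 0$, so
\[
\left.\frac{\partial \bar f}{\partial z}\right|_{x=0} = z^2,
\]
which forces $z = 0$ directly, with no $\gamma$ appearing. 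Substituting $z = 0$ into $\partial \bar f/\partial x|_{x=0} = a_\mu + b_\mu z + c_\mu z^2$ yields $a_\mu = 0$, and the $u_i$-derivatives vanish automatically since the $x=0$ part of $\bar f$ has constant coefficients. This is exactly the paper's argument and is immediate once the dehomogenization is correct.

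Your proposed workaround --- normalizing $\gamma = 0$ by a further coordinate change --- is both unnecessary and problematic: a substitution $z \mapsto z + \delta y^2$ in characteristic $2$ sends $z^3 \mapsto z^3 + \delta y^2 z^2 + \delta^2 y^4 z + \delta^3 y^6$, so killing the $y^2 z^2$ coefficient reintroduces a nonzero $y^4 z$ coefficient, undoing the earlier normalization $\beta = 0$. So you cannot in general arrange $\beta = \gamma = 0$ simultaneously, and your genericity remark (``$\gamma$ is generically zero'') is not correct either --- for general $f$ one expects $\gamma \ne 0$. Fortunately none of this matters: the characteristic-$2$ vanishing of $\partial(z^2)/\partial z$ removes $\gamma$ from the computation entirely.
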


\begin{proof}
This follows immediately since
\[
\frac{\prt f}{\prt x} |_{(x = 0)} = a_{\mu} y^5 + b_{\mu} y^3 z + c_{\mu} y z^2, \ 
\frac{\prt f}{\prt z} |_{(x = 0)} = z^2,
\]
and $\prt f/\prt u_i$ vanishes along $(x = 0)$ for any $i$.
\end{proof}

By Lemma \ref{lem:dP1critloc}, the singular locus of $X$ is $\tau^{-1} (C_1) \cup \tau^{-1} (C_2)$.
Let $\varphi \colon Y \to X$ be the composite of blowups of $X$ at each point of $\tau^{-1} (C_1)$ and along $\tau^{-1} (C_2)$.
The singularities $\tau^{-1} (C_1)$ correspond to (almost) nondegenerate critical points and $\varphi$ gives a desired resolution of such a singularities (see Proposition \ref{prop:existresol} and also \cite[Proposition 4.1]{Okada}).
Note that if $n = 3$, then $C_2$ is also isolated and $f$ has almost nondegenerate critical points at each point of $C_2$.
We will observe that $\varphi$ gives a desired resolution of singularities along $\tau^{-1} (C_2) \subset X$ assuming $n > 3$.

Recall that
\[
f = \alpha y^6 + \gamma y^2 z^2 + z^3 + x (a_{\mu} y^5 + b_{\mu} y^3 z + c_{\mu} y z^2) + x^2 g, 
\]
and
\[
C_2 = (x = z = a_{\mu} = 0).
\]
We write $g = y^4 d_{2 \mu} + h$, where $d_{2 \mu} = d_{2 \mu} (u)$ and $h = h (u,x,y,z)$ does not contain a monomials divisible by $y^4$.
Note that $h \in (x,z)$.
We see that $C_2 \subset Z$ is covered by open sets $U_{i,y} = (u_i \ne 0) \cap (y \ne 0) \subset Z$ for $i = 0,\dots,n-2$.
In the following we work with $U_{0,y}$ and analyze the restriction of $f$ to $U_{0,y}$.
By symmetry the descriptions for the other $U_{i,y}$ are completely the same.

The open set $U_{0,y}$ is isomorphic to the affine space $\mbA^n$ and the restriction of the sections $u_1,\dots,u_{n-2},x,z$ to $U_0$ are the affine coordinates of $U_0 \cong \mbA^n$.
By a slight abuse of notation, the restriction of $u_i,x,z$ to $U_0$ are also denoted by the same symbol.
For a polynomial $e = e (u_0,\dots,u_{n-2},x,y,z)$, we define
\[
\bar{e} = \bar{e} (u_1,\dots,u_{n-2},x,z) = e (1,u_1,\dots,u_{n-2},x,1,z).
\]
We have
\[
\begin{split}
f|_{U_{0,y}} &= \alpha + \gamma z^2 + z^3 + a_{\mu} x + b_{\mu} x z + c_{\mu} x z^2 + d x^2 + x^2 \bar{h} \\
& \sim_2 a_{\mu} x + d x^2 + b_{\mu} x z + z^3 + (c_{\mu} x z^2 + x^2 \bar{h})
\end{split}
\]
We see that $\deg a_{\mu} = \mu > 0$ and the hypersurface in $\mbA^{n-2}_u$ defined by $a_{\mu} = 0$ is nonsingular since $f$ is general.
Moreover $c_{\mu} x z^2 + x^2 \bar{h}$ is contained in the ideal $(x,z)^3$ and it does not contain a monomial divisible by $z^3$.
This shows that $f|_{U_{0,y}}$ satisfies Condition \ref{cond:resolposcrit1} and, by Lemma \ref{lem:resol1}, the blowup of $X$ along $\tau^{-1} (C_2)$ is universally $\CH_0$-trivial, resolves the singularity of $X$ along $\tau^{-1} (C_2)$ and pulls back the sheaf $\mcM$ into a subsheaf of $\Omega^{n-1}$.

The following is the conclusion of Sections \ref{sec:dP1-1} and \ref{sec:dP1-2}.

\begin{Prop} \label{prop:goodresoldP1}
Suppose that $(n,\lambda,\mu)$ satisfies the assumption of \emph{Lemma \ref{lem:dP1num}} and let $X$ be as in \emph{Setting \ref{setting:dP1}}.
Then there exists a universally $\CH_0$-trivial resolution $\varphi \colon Y \to X$ of singularities such that $\varphi^* \mcM \inj \Omega_Y^{n-1}$ and we have
\[
\mcM \cong \mcO_X (4 \mu - \lambda - (n-1), 2).
\]
\end{Prop}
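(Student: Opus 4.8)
The plan is to combine the local analysis already carried out in Sections \ref{sec:dP1-1} and \ref{sec:dP1-2} with the identification of the sheaf $\mcM$ coming from Koll\'ar's construction, and then verify that the various local resolutions glue to a global resolution of the required type. First I would observe that, by Lemma \ref{lem:critdP1case1} (if $\lambda \ne \mu$) or Lemmas \ref{lem:critdP1case2} and \ref{lem:dP1critloc} (if $\lambda = \mu$), the singular locus of $X$ decomposes as $\Sing(X) = \tau^{-1}(C_1) \sqcup \tau^{-1}(C_2)$, where $C_1 \subset U_x$ consists of finitely many (almost) nondegenerate critical points of $f$ and $C_2 = (x = z = a_\mu = 0) \subset \Pi_y$ (with $C_2 = \varnothing$ when $\lambda \ne \mu$, and $C_2$ a finite set when $n = 3$). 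Along $\tau^{-1}(C_1)$, Proposition \ref{prop:existresol} (equivalently \cite[Proposition 4.1]{Okada}) provides a universally $\CH_0$-trivial resolution obtained by blowing up the points, and the pullback of $\mcM^\circ$ embeds into $\Omega^{n-1}$. Along $\tau^{-1}(C_2)$ with $n > 3$, I would invoke exactly the local computation already recorded: $f|_{U_{i,y}}$ satisfies Condition \ref{cond:resolposcrit1}, so Lemma \ref{lem:resol1} (together with Remark \ref{rem:CHtriv}) shows that the blowup of $X$ along $\tau^{-1}(C_2)$ is a universally $\CH_0$-trivial morphism resolving the singularities there, and Lemma \ref{lem:liftM} shows the pulled-back generator of $\mcM$ has no pole along the exceptional divisor. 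For $n = 3$, $C_2$ is a finite set of almost nondegenerate critical points, so it is again covered by Proposition \ref{prop:existresol}. The composite $\varphi \colon Y \to X$ of these blowups (on disjoint loci, hence order-independent) is therefore a universally $\CH_0$-trivial resolution with $\varphi^*\mcM \inj \Omega_Y^{n-1}$; universal $\CH_0$-triviality of the composite follows from \cite[Proposition 1.8]{CTP} since it holds for each factor.

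It remains to compute $\mcM$ as a divisor class on $X$. Here I would use that, by Setting \ref{setting:dP1} and the remark after the definition of the associated sheaf in Section \ref{sec:cyccov}, on $Z^\circ$ we have $\mcM^\circ \cong {\tau^\circ}^*(\omega_{Z^\circ} \otimes {\mcL^\circ}^2)$, because the branch divisor $(f = 0)$ is reduced. The key inputs are then: (i) the canonical class of $Z^\circ$, which since $Z$ has only quotient singularities along $(x = y = 0)$ of codimension $\ge 2$ agrees with $K_Z$ restricted to $Z^\circ$, and the standard toric computation gives $K_Z \sim -(n-1+\lambda+2\mu)F - 4D$ on the $\mbP(1,1,2)$-bundle (the fiber weights $1,1,2$ sum to $4$ and the $\lambda, 2\mu$ contribute to the $F$-coefficient, plus the $-(n-1)$ from the base $\mbP^{n-2}$); (ii) $\mcL = \mcO_Z(3\mu, 3)$, so $\mcL^2 = \mcO_Z(6\mu,6)$; and (iii) $\tau^*\mcO_Z(a,b) = \mcO_X(a,b)$ under the natural identification of divisor classes on $X$ with those on $Z$ (the $w$-coordinate does not change the $\operatorname{Cl}$-grading in the relevant range). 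Combining, $\omega_Z \otimes \mcL^2 \cong \mcO_Z\big((-(n-1)-\lambda-2\mu) + 6\mu,\ -4+6\big) = \mcO_Z(4\mu - \lambda - (n-1),\ 2)$, whence $\mcM^\circ \cong \mcO_{X^\circ}(4\mu - \lambda - (n-1), 2)$. Finally, since $X$ is normal and $X \setminus X^\circ$ has codimension $\ge 2$ in $X$ (it equals $(x = y = 0) \cap X$, which has codimension $2$), pushing forward along $\iota \colon X^\circ \inj X$ preserves the reflexive — hence, by part (1) of the outline, locally free — sheaf, giving $\mcM \cong \mcO_X(4\mu - \lambda - (n-1), 2)$.

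The main obstacle I anticipate is bookkeeping rather than anything deep: one must be careful that the canonical sheaf computation on the singular variety $Z$ via its smooth locus $Z^\circ$ is legitimate (using that $Z$ is $\mbQ$-factorial with quotient singularities in codimension $\ge 2$, so $\omega_{Z^\circ}$ extends uniquely to the reflexive sheaf $\omega_Z$ and the restriction of the toric canonical divisor is correctly $-(n-1+\lambda+2\mu)F - 4D$), and that the identification $\tau^*\mcO_Z(\alpha,\beta) \cong \mcO_X(\alpha,\beta)$ of reflexive sheaves holds on all of $X$, not merely on $X^\circ$. Once these identifications are in place, and once one checks the disjointness of $\tau^{-1}(C_1)$ and $\tau^{-1}(C_2)$ so that the blowups can be performed independently and the earlier local lemmas apply verbatim, the proposition follows by assembling Proposition \ref{prop:existresol}, Lemmas \ref{lem:liftM}, \ref{lem:resol1}, Remark \ref{rem:CHtriv}, and \cite[Proposition 1.8]{CTP}.
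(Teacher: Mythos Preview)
Your proposal is correct and follows essentially the same approach as the paper's proof: the paper simply cites the results of Sections \ref{sec:dP1-1} and \ref{sec:dP1-2} together with Proposition \ref{prop:existresol} for the existence of the resolution, and then computes $\omega_Z \cong \mcO_Z(-(n-1)-\lambda-2\mu,-4)$ and $\mcL \cong \mcO_Z(3\mu,3)$ to obtain $\mcM \cong \tau^*(\omega_Z \otimes \mcL^2) \cong \mcO_X(4\mu-\lambda-(n-1),2)$. Your write-up unpacks these references accurately and even makes explicit some bookkeeping (disjointness of $C_1$ and $C_2$, the codimension-$2$ extension argument for $\mcM$) that the paper leaves implicit.
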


\begin{proof}
The existence of the universally $\CH_0$-trivial resolution $\varphi \colon Y \to X$ follows from the results of Sections \ref{sec:dP1-1}, \ref{sec:dP1-2} and Proposition \ref{prop:existresol}.
We have $\omega_Z \cong \mcO_Z (-(n-1)-\lambda - 2 \mu,-4)$ and $\mcL \cong \mcO_Z (3 \mu,3)$, so that
\[
\mcM \cong \tau^*(\omega_Z \otimes \mcL^2) \cong \mcO_X (4 \mu - \lambda - (n-1),2),
\]
and the proof is completed.
\end{proof}

\subsection{Proof of Theorem \ref{thm:dP1} and Corollary \ref{cor:dP1}}

\begin{proof}[Proof of \emph{Theorem \ref{thm:dP1}}]
Let $X$ be as in Setting \ref{setting:dP1}.
By Proposition \ref{prop:goodresoldP1}, there exists a universally trivial resolution $\varphi \colon Y \to X$ such that $\varphi^*\mcM \inj \Omega_Y^{n-1}$.
The assumption $4 \mu - \lambda - (n-1) \ge 0$ implies that $H^0 (X, \mcM) \ne 0$ and hence $H^0 (Y,\Omega_Y^{n-1}) \ne 0$.
By Lemma \ref{lem:totaro}, $Y$ is not universally $\CH_0$-trivial.
Now we assume that $f \in H^0 (Z, \mcL^2)$ is very general so that the coefficients of $f$ are algebraically independent over $\mbF_2$.
We can lift it to characteristic $0$ via the ring of Witt vectors which is a DVR with residue field $\K$, and then to $\mbC$ by choosing an embedding of the fraction field of the DVR into $\mbC$.
Thus Theorem \ref{thm:dP1} follows from Theorem \ref{thm:spCH0} (see Section \ref{sec:outline} for a more detailed explanation).
\end{proof}

\begin{proof}[Proof of \emph{Corollary \ref{cor:dP1}}]
This follows from Theorem \ref{thm:dP1} and Lemma \ref{lem:dP1ample}.
\end{proof}

\section{Del Pezzo fibrations of degree $2$}

For integers $n,\lambda,\mu$ and $\nu$, we denote by $P = P_2 (n,\lambda,\mu,\nu)$ the $\mbP (1,1,1,2)$-bundle over $\mbP^{n-2}$ defined by
\[
\begin{pmatrix}
u_0 & \cdots & u_{n-2} & & x & y & z & w \\
1 & \cdots & 1 &  | & 0 & \lambda & \mu & \nu \\
0 & \cdots & 0 &  | & 1 & 1 & 1 & 2
\end{pmatrix}
\]
and consider the complete linear system $|\mcO_P (2 \nu,4)|$.
We assume that $n \ge 3$ and, by normalizing the action, we may assume $0 \le \lambda \le \mu$.
The aim of the present section is to prove the following.

\begin{Thm} \label{thm:dP2}
Suppose that the ground field is $\mbC$.
Let $X$ be a very general member of $|\mcO_P (2 \nu,4)|$, and suppose that $X$ is nonsingular and the projection $\pi \colon X \to \mbP^{n-2}$ is a del Pezzo fibration.
If $2 \nu - \lambda - \mu - (n-1) \ge 0$, then $X$ is not stably rational.
\end{Thm}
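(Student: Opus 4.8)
The plan is to run, essentially verbatim, the argument used for del Pezzo fibrations of degree $1$. I would reduce modulo $2$, present $X$ as a double cover of a smooth ambient variety, build a universally $\CH_0$-trivial resolution $\varphi\colon Y\to X$ with $\varphi^*\mcM\inj\Omega_Y^{n-1}$ for the associated Koll\'ar sheaf $\mcM$, and conclude by Lemma \ref{lem:totaro} and the specialization Theorem \ref{thm:spCH0}. For the set-up: work over an algebraically closed field of characteristic $2$ and take for $X$ the special member of $|\mcO_P(2\nu,4)|$ given by $w^2+f(u,x,y,z)=0$, $P=P_2(n,\lambda,\mu,\nu)$, with $f$ a general section of bi-degree $(2\nu,4)$. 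Deleting $w$ yields a $\mbP(1,1,1)$-bundle $Z$ over $\mbP^{n-2}$; the key structural simplification compared with the degree $1$ case is that $Z$ is an honest $\mbP^2$-bundle and therefore smooth, so no proper smooth open locus $Z^{\circ}\subsetneq Z$ is needed. With $\mcL=\mcO_Z(\nu,2)$ we get $f\in H^0(Z,\mcL^2)$ and the double cover $\tau\colon X\to Z$ branched along $(f=0)$, whose associated invertible sheaf is $\mcM=\tau^*(\omega_Z\otimes\mcL^2)\cong\mcO_X(2\nu-\lambda-\mu-(n-1),1)$, using $\omega_Z\cong\mcO_Z(-(n-1)-\lambda-\mu,-3)$.

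As a preliminary I would establish the analogues of Lemmas \ref{lem:dP1num} and \ref{lem:dP1ample}: the hypothesis that a general $X$ is a nonsingular del Pezzo fibration forces $\nu>0$ (for $\nu\le 0$ a general member is reducible, non-reduced or visibly singular, or else $P$ and hence $X$ splits off a product factor and $X$ is not a Mori fibre space), together with a ``boundary'' inequality controlling the regime where $\nu$ is small relative to $\mu$; and, for the companion statement in the vein of Corollary \ref{cor:dP1}, that $-K_X$ non-ample, or $n$ small, implies $2\nu-\lambda-\mu-(n-1)\ge 0$. These inequalities are exactly what keeps the critical-point analysis finite.

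The heart of the proof is the construction of $\varphi$. Decomposing $Z=U_x\sqcup\Pi_y\sqcup\Gamma_z$ as in Lemma \ref{lem:surjrest} (with $m=1$, $d=4$, $\delta=2\nu$), I would analyse $\Crit(f)$ one stratum at a time. On $U_x$: if $2\nu\ge\max\{3,3\lambda,3\mu\}$ (resp.\ $\max\{2,2\lambda,2\mu\}$ when $n$ is even) then Lemma \ref{lem:surjrest}(2) (resp.\ (1)) gives surjectivity of the $4$th (resp.\ $3$rd) restriction map of $\mcL^2$, so a general $f$ has only (almost) nondegenerate critical points there and Proposition \ref{prop:existresol} supplies the resolution locally. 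Similarly one tries Lemma \ref{lem:surjrest}(3) on $\Pi_y$ and (4) on $\Gamma_z$; but those require $2\nu\ge\max\{4\lambda+1,3\lambda+\mu\}$ and $2\nu\ge 4\mu+1$ respectively, and in the ranges left over — and along $\Gamma_z$ essentially always, since $2\nu\ge 4\mu+1$ is incompatible with the del Pezzo constraints — $\Crit(f)$ has a positive-dimensional component sitting over a subvariety of the base $\mbP^{n-2}$. For these I would proceed as in Section \ref{sec:dP1-2}: restrict $f$ to a standard affine chart of $Z$, perform a coordinate change of the type allowed on a WPS bundle, and verify that the result satisfies Condition \ref{cond:resolposcrit1} or Condition \ref{cond:resolposcrit2} — the smoothness of the auxiliary hypersurface $(a=0)$ or complete intersection $(a=b=0)$ in $\mbA^{n-2}_u$ coming from the generality of $f$, and the shape of the cubic (resp.\ quadratic) part arranged as in the proof of Proposition \ref{prop:goodresoldP1}. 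Then Lemmas \ref{lem:liftM}, \ref{lem:resol1}, \ref{lem:resol2} together with Remark \ref{rem:CHtriv} give, for each such component, a blowup that resolves the associated singularities of $X$, is universally $\CH_0$-trivial, and pulls $\mcM$ into $\Omega_Y^{n-1}$; the numerical lemmas are invoked to rule out the degenerate possibility that $\Crit(f)$ contains all of $\Gamma_z$. Gluing the local resolutions and recording the formula for $\mcM$ gives the analogue of Proposition \ref{prop:goodresoldP1}.

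Finally, $2\nu-\lambda-\mu-(n-1)\ge 0$ makes $H^0(X,\mcM)\ne 0$ (a monomial $u^I x$ with $|I|=2\nu-\lambda-\mu-(n-1)$ restricts nontrivially, not lying in the ideal of $X$ for degree reasons), hence $H^0(Y,\Omega_Y^{n-1})\ne 0$ and $Y$ is not universally $\CH_0$-trivial by Lemma \ref{lem:totaro}; choosing $f$ very general over $\mbF_2$, lifting through the ring of Witt vectors and then into $\mbC$, Theorem \ref{thm:spCH0} shows a very general complex $X$ is not universally $\CH_0$-trivial, hence not stably rational. The step I expect to be the main obstacle is the classification of positive-dimensional critical loci in the third paragraph: in contrast to degree $1$, where this occurred only for $\lambda=\mu$ and along the single codimension-$3$ locus $(x=z=a_\mu=0)$, here the admissible $(\lambda,\mu,\nu)$ vary over a wider range, so one must organise a case division — indexed by the inequalities among $0\le\lambda\le\mu$ and $\nu$ — that determines, on each of $U_x$, $\Pi_y$, $\Gamma_z$, which of Conditions \ref{cond:resolposcrit1}, \ref{cond:resolposcrit2} is in force, and then check the generality hypotheses case by case. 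Setting up this bookkeeping cleanly, rather than any single computation, is where the real effort goes.
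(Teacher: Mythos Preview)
Your overall strategy matches the paper's, and the case division you anticipate is exactly what the paper carries out in Sections \ref{sec:dP2-1} and \ref{sec:dP2-2}: for $\nu\ne 2\mu$ (and away from one exceptional stratum) the restriction-map lemma gives only (almost) nondegenerate critical points on all of $Z$, while for $\nu=2\mu$ one finds a positive-dimensional component of $\Crit(f)$ along $\Gamma_z$ (or along $(x=0)$ when $\lambda=\mu$) which fits Condition \ref{cond:resolposcrit1} or \ref{cond:resolposcrit2}.

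There is, however, one case your proposal does not cover and where the characteristic $2$ double-cover method genuinely breaks down: the boundary stratum $2\nu=3\mu=4\lambda$. In that case every monomial of bi-degree $(2\nu,4)$ not divisible by $x$ is a constant multiple of $y^4$, so $f|_{(x=0)}=\alpha y^4$ and, in characteristic $2$, all partials $\partial f/\partial y$, $\partial f/\partial z$, $\partial f/\partial u_i$ vanish identically on $(x=0)$. Hence $\Crit(f)\cap(x=0)$ is the \emph{divisor} $\{a_{3k}y^3+b_{2k}y^2z+c_k yz^2+\delta z^3=0\}$ inside the $(n-1)$-dimensional locus $(x=0)$, i.e.\ has codimension $2$ in $Z$. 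This is strictly larger than anything Conditions \ref{cond:resolposcrit1} or \ref{cond:resolposcrit2} allow (they require $\Crit(f)$ to be cut out by at least three equations $x=y=a=0$), so neither Lemma \ref{lem:resol1} nor \ref{lem:resol2} applies, and your plan to ``verify that the result satisfies Condition \ref{cond:resolposcrit1} or \ref{cond:resolposcrit2}'' cannot succeed here.

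The paper handles this case by switching to characteristic $3$ and exhibiting a \emph{triple} cover structure instead (Setting \ref{setting:dP2-3}): one takes the special member $X=\{z^3x+f(u,x,y,w)=0\}$, realises it as a degree $3$ cover of a hypersurface $Z$ in an auxiliary $\mbP(1,1,3,2)$-bundle branched along the section $\bar z$, and checks that $\bar z$ has only nondegenerate critical points on $Z^\circ$. The associated sheaf is then $\mcM\cong\mcO_X(\nu-\lambda-(n-1),0)$ rather than $\mcO_X(2\nu-\lambda-\mu-(n-1),1)$, and the two propositions (\ref{prop:exitCHresoldP2} and \ref{prop:exitCHresoldP2-3}) together feed into the specialisation argument. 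So the missing idea in your proposal is not more careful bookkeeping within characteristic $2$, but recognising that one numerical stratum forces a change of characteristic and of covering structure.
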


\begin{Cor} \label{cor:dP2}
Let $X$ be as in \emph{Theorem \ref{thm:dP2}} $($without assuming $2 \nu - \lambda - \mu - (n-1) \ge 0$$)$.
If either $n = 3$ or $-K_X$ is not ample, then $X$ is not stably rational.
\end{Cor}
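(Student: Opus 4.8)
The plan is to deduce Corollary~\ref{cor:dP2} from Theorem~\ref{thm:dP2} in exactly the same way as Corollary~\ref{cor:dP1} is deduced from Theorem~\ref{thm:dP1}: the only thing to check is that under the hypothesis ``$n = 3$ or $-K_X$ is not ample'' the numerical inequality $2\nu - \lambda - \mu - (n-1) \ge 0$ of Theorem~\ref{thm:dP2} holds, so that Theorem~\ref{thm:dP2} applies directly. Thus the real content is an analogue of Lemma~\ref{lem:dP1ample} for the degree $2$ case, which I would isolate as a lemma (or simply prove inline). First I would record, via adjunction on $P = P_2(n,\lambda,\mu,\nu)$, that $-K_X \sim (n-1 + \lambda + \mu - \nu)F + D$, where $F$ and $D$ are the restrictions of the two generators of $\operatorname{Cl}(P)$, as in the proof of Lemma~\ref{lem:dP1ample}.

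Next I would treat the case $-K_X$ not ample. Since $0 \le \lambda \le \mu$ and the linear system $|\mcO_P(2\nu,4)|$ defining $X$ should be base point free for $X$ to be a nonsingular del Pezzo fibration, one gets that $\mu F + D$ is nef (the nef cone of $P$ is generated by $F$ and $\lambda_l F + a_l D$ with $\lambda_l/a_l$ maximal, which here is $\mu F + D$ since $\mu = \max\{0,\lambda,\mu\}$ and the $w$-weight ratio $\nu/2$ must not exceed $\mu$ for smoothness — this needs a short check paralleling Lemma~\ref{lem:dP1num}). Because $\rho(X) = 2$, the divisor $\alpha F + D$ is ample for $\alpha > \mu$; hence $-K_X$ ample would force $n - 1 + \lambda + \mu - \nu > \mu$, i.e. $\nu - \lambda - (n-1) < 0$ is false, so non-ampleness gives $n - 1 + \lambda + \mu - \nu \le \mu$, that is $\nu \ge \lambda + (n-1)$. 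Combined with $\mu \ge \lambda \ge 0$ this yields $2\nu - \lambda - \mu - (n-1) = (\nu - \lambda - (n-1)) + (\nu - \mu) \ge 0$, provided one also knows $\nu \ge \mu$; the latter again follows from the smoothness analysis (if $\nu < \mu$ then the defining polynomial $F$ would, after filtering, be singular along a non-empty locus in $(x = y = z = 0)$ unless a numerical coincidence like $2\mu$-type relation forces $\nu$ up, exactly as in Lemma~\ref{lem:dP1num}(2)). I would package these smoothness constraints as the degree-$2$ counterpart of Lemma~\ref{lem:dP1num} and invoke it here.

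For the case $n = 3$, the inequality to prove is $2\nu - \lambda - \mu - 2 \ge 0$. Here I would argue by contradiction as in Lemma~\ref{lem:dP1ample}(2): if $2\nu < \lambda + \mu + 2$, then using $0 \le \lambda \le \mu$ one gets $2\nu < 2\mu + 2$, so $\nu \le \mu$, and then the smoothness of $X$ (again the degree-$2$ analogue of Lemma~\ref{lem:dP1num}) forces either a contradiction outright or a relation of the type $c\nu = c'\mu$ coming from the monomials in $F$ divisible by $w^2$, which together with the bound $\nu \le \mu$ and $\mu > 0$ pins down $\lambda,\mu,\nu$ to small values that violate $2\nu < \lambda+\mu+2$. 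Running through the finitely many small cases completes the contradiction.

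The main obstacle I anticipate is not the ampleness/adjunction bookkeeping — that is routine — but establishing the correct degree-$2$ analogue of Lemma~\ref{lem:dP1num}, i.e. determining precisely which $(n,\lambda,\mu,\nu)$ admit a general $X \in |\mcO_P(2\nu,4)|$ that is a nonsingular del Pezzo fibration, and in particular the ``boundary'' numerical coincidences (the role played by $6\mu = 5\lambda$ in the degree-$1$ case). One must carefully inspect the monomials of bidegree $(2\nu,4)$ in $u,x,y,z,w$: the $w^2$-term contributes a monomial $w^2$ only if $\nu \le $ something, the terms $x^i y^j z^k$ with $i + j + k = 4$ must cover the base, and the potential singular loci lie in coordinate subspaces like $(x = y = 0)$ or $(x = y = z = 0)$. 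Getting the full list of admissible tuples and the resulting inequalities right is the delicate part; once it is in hand, Corollary~\ref{cor:dP2} follows immediately by combining it with Theorem~\ref{thm:dP2}.
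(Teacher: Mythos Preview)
Your overall plan is exactly the paper's: reduce Corollary~\ref{cor:dP2} to Theorem~\ref{thm:dP2} via a lemma (the paper's Lemma~\ref{lem:dP2ample}) establishing $2\nu - \lambda - \mu - (n-1) \ge 0$ under either hypothesis, and you correctly compute $-K_X \sim (n-1+\lambda+\mu-\nu)F + D$ by adjunction. The $n=3$ sketch is fine and matches the paper's argument once you feed in $2\nu \ge 3\mu$ and $\nu \ge 1$ from Lemma~\ref{lem:dP2num}.

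There is, however, one concrete error in the ``$-K_X$ not ample'' case. You assert that ``the $w$-weight ratio $\nu/2$ must not exceed $\mu$ for smoothness,'' so that the nef boundary is $\mu F + D$. This is false: the tuple $(\lambda,\mu,\nu) = (0,0,m)$ with $m \ge 1$ gives a perfectly good nonsingular del Pezzo fibration (this is exactly the family in Theorem~\ref{mainthm3}(1)), and there $\nu/2 = m/2 > 0 = \mu$. In that regime the nef cone of $P$ is generated by $F$ and $\nu F + 2D$, not $\mu F + D$, so your inequality $\nu \ge \lambda + (n-1)$ does not follow as written. The fix, which is what the paper does, is to split into two cases. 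If $\nu \ge 2\mu$ then $|\mcO_P(\nu,2)|$ is base point free, so $\nu F + 2D$ is nef, hence $\alpha F + D$ is ample for $\alpha > \nu/2$; non-ampleness of $-K_X$ then gives $n-1+\lambda+\mu-\nu \le \nu/2$, i.e.\ $3\nu \ge 2(\lambda+\mu+n-1)$, whence $2\nu \ge \tfrac{4}{3}(\lambda+\mu+n-1) \ge \lambda+\mu+n-1$. If $\nu < 2\mu$ then your argument goes through: $\mu F + D$ is nef, non-ampleness gives $\nu \ge \lambda + (n-1)$, and combining with $2\nu \ge 3\mu$ (Lemma~\ref{lem:dP2num}(2), which also gives the $\nu \ge \mu$ you wanted) yields $2\nu \ge \lambda + \mu + (n-1)$.
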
 

\begin{Lem} \label{lem:dP2num}
Suppose that the ground field is $\mbC$.
Let $X$ be a general member of $|\mcO_P (2 \nu,4)|$ and suppose that  $\pi \colon X \to \mbP^{n-2}$ is a nonsingular del Pezzo fibration.
Then the following hold.
\begin{enumerate}
\item $\nu \ge 1 $, and if $\nu = 1$, then $\lambda = \mu = 0$. 
\item $2 \nu \ge 3 \mu$.
\item $2 \nu \ge 4 \lambda$.
\item If $3 \mu < 2 \nu < 4 \mu$ and $2 \nu \ne 3 \mu + \lambda$, then $n = 3$.
\item If $2 \nu < 3 \mu + \lambda$, then $2 \nu = 3 \mu$.
\end{enumerate}
\end{Lem}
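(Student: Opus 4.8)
\emph{Approach.} I would argue exactly as in Lemma~\ref{lem:dP1num}: for each item, assume the asserted (in)equality fails, normalize the equation of $X$, and then produce either a non-reduced or reducible equation, or a positive-dimensional locus of singular points of $X$, or (in one degenerate case) an isomorphism $X\cong\mbP^{n-2}\times S$; each of these contradicts the standing hypothesis that $X$ is a nonsingular del Pezzo fibration (hence an irreducible variety with $\rho(X)=2$). First, collecting the defining polynomial of $X$ by powers of $w$ yields $F=F_2w^2+F_1w+F_0$, where $F_2$ is a constant and $F_1,F_0$ are polynomials in $u,x,y,z$ of bi-degrees $(\nu,2)$ and $(2\nu,4)$. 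For a general member $F_2\ne0$, and since $F_1$ has the same bi-degree as $w$, the substitution $w\mapsto w-F_1/(2F_2)$ is an automorphism of $P$; after rescaling we may therefore assume $F=w^2+R$ with $R$ general of bi-degree $(2\nu,4)$. Let $W=(w=0)\subset P$, a $\mbP^2$-bundle over $\mbP^{n-2}$ with Cox ring $\mbC[u,x,y,z]$, and $D=(R=0)\subset W$. A point of $X$ with $w\ne0$ is automatically smooth, so $X$ is nonsingular if and only if $D$ is; hence it suffices to exhibit a singular point of $D$ in each bad case.

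\emph{Two strata.} In $R$ the coefficient of a monomial $x^ay^bz^c$ ($a+b+c=4$) is a form on $\mbP^{n-2}$ of degree $2\nu-b\lambda-c\mu$, absent when this degree is negative. Two toric strata will do all the work. (i) The section $\Gamma_z=(x=y=0)\subset W$, a copy of $\mbP^{n-2}$, is contained in $D$ exactly when the $z^4$-coefficient is absent, i.e.\ $2\nu<4\mu$; and, working in the affine charts $(u_i\ne0)\cap(z\ne0)$ of $W$, whose coordinates transverse to $\Gamma_z$ are $u_i^{\mu}x/z$ and $u_i^{\mu-\lambda}y/z$ (the hypothesis $\lambda\le\mu$ keeps these exponents non-negative), one finds that $D$ is smooth along $\Gamma_z$ if and only if the coefficients of $xz^3$ and of $yz^3$ in $R$ — forms on $\mbP^{n-2}$ of degrees $2\nu-3\mu$ and $2\nu-3\mu-\lambda$ — have no common zero. (ii) If $2\nu<4\lambda$ then, since $\lambda\le\mu$, every degree-$4$ monomial in $y,z$ alone has negative coefficient degree, so $R=xS$ with $S$ of bi-degree $(2\nu,3)$; then $(w=x=0)\subset X$, and along this subvariety the only partial derivative of $F$ that can be nonzero is $\partial F/\partial x=S|_{x=0}$, which is a nonzero form (its $y^3$-coefficient survives because $2\nu\ge3\mu\ge3\lambda$ — the inequality $2\nu\ge3\mu$ being item~(2)). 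Hence $\Sing(X)$ contains the nonempty, positive-dimensional locus $(w=x=S|_{x=0}=0)$, proving~(3).

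\emph{The other items.} For~(2): if $2\nu<3\mu$ then either $\mu=0$, in which case $2\nu<0$, $F=F_2w^2$ and $X$ is non-reduced, or $\mu\ge1$, in which case the $z^4$-, $xz^3$- and $yz^3$-coefficients are all absent, so $D\supset\Gamma_z$ and, by~(i), $D$ is singular along all of $\Gamma_z$. For~(1): when $\nu\le0$ the equation $F$ involves too few of the fibre coordinates $x,y,z,w$, forcing $X$ to be non-reduced, reducible, or singular along a coordinate stratum, or else $\lambda=\mu=\nu=0$ and $X\cong\mbP^{n-2}\times S$ with $S$ a del Pezzo surface of degree $2$ and $\rho(X)=9>2$; so $\nu\ge1$, and then $2\nu\ge3\mu$ from~(2) forces $\mu=\lambda=0$ when $\nu=1$. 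For~(4) and~(5): assume $3\mu<2\nu<4\mu$, so $D\supset\Gamma_z$. If $2\nu<3\mu+\lambda$, then the $yz^3$-coefficient is absent while the $xz^3$-coefficient has positive degree and hence vanishes somewhere on $\mbP^{n-2}$; by~(i) this makes $D$ singular along $\Gamma_z$, a contradiction, so this range is excluded — together with~(2) this gives~(5) and makes the subcase $3\mu<2\nu<3\mu+\lambda$ of~(4) vacuous. If instead $2\nu>3\mu+\lambda$, the two coefficients in~(i) are general forms of positive degree on $\mbP^{n-2}$, and such a pair of forms has a common zero if and only if $n-2\ge2$; smoothness of $D$ therefore forces $n=3$, which is~(4).

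\emph{Main obstacle.} The only genuinely non-formal step is the middle one: choosing the correct affine charts of the $\mbP^2$-bundle $W$, writing down invariant coordinates along $\Gamma_z$ and along $(w=x=0)$ (where $\lambda\le\mu$ is used), and checking that singularity of $D$ along these strata is controlled exactly by the named monomial coefficients. Everything after that is bookkeeping with degrees of coefficients plus the elementary fact that two general hypersurfaces in $\mbP^{n-2}$ meet precisely when $n\ge4$.
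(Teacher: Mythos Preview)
Your proof is correct and follows essentially the same plan as the paper's: monomial bookkeeping to determine which coefficients survive, then producing either a singular locus or a Picard-number violation. Your reduction to the branch divisor $D=(R=0)\subset W$ after completing the square is cosmetic---the paper works directly with $X\subset P$, but the computations are identical since $\operatorname{Sing}(X)\cap(w=0)=\operatorname{Sing}(D)$.

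The one substantive difference is item~(3). The paper observes that if $2\nu<4\lambda$ then every monomial of bi-degree $(2\nu,4)$ is divisible by $w$ or $x$, so $X$ contains the sub-$\mbP^1$-bundle $(x=w=0)$, forcing $\rho(X)\ge 3$ and contradicting the Mori fiber space condition $\rho(X)=2$. You instead argue via singularities: writing $F=w^2+xS$, the locus $(w=x=S|_{x=0}=0)$ is singular on $X$ and is a nonempty divisor in the $(n-1)$-fold $(w=x=0)$. Both arguments are valid; the paper's is shorter and avoids the forward reference to~(2) (which you need to ensure $S|_{x=0}\not\equiv 0$), while yours keeps everything phrased uniformly in terms of smoothness. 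Note that your argument for~(3) actually works even if $S|_{x=0}\equiv 0$, since then the entire $(w=x=0)$ lies in $\operatorname{Sing}(X)$---so the appeal to~(2) is not strictly necessary.
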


\begin{proof}
We prove (1).
Suppose that $\nu < 0$.
Then the monomials of bi-degree $(2 \nu,4)$ are divisible by $w$ and thus $X$ is either reducible or non-reduced.
Thus $\nu \ge 0$.
Suppose that $\nu = 0$. 
If in addition $\lambda > 0$, then $w^2, w x^2, x^4$ are the monomials of bi-degree $(2 \nu,4) = (0,4)$ and thus $X$ is reducible.
Thus $\lambda = 0$.
If $\mu > 0$, then 
\[
w^2, w x^2, w x y, w y^2, x^4, x^3 y, x^2 y^2, x y^3, y^4
\] 
are the monomials of bi-degree $(2 \nu,4) = (0,4)$.
In this case $X$ is singular along $(x = y = w = 0) \cong \mbP^1$.
This contradicts the smoothness of $X$ and thus $\mu = 0$.
Then $X$ is isomorphic to the product $\mbP^1 \times S$, where $S$ is a del Pezzo surface of degree $2$, and $\pi \colon X \to \mbP^1$ is not a del Pezzo fibration.
This proves the first assertion of (1).
The latter assertion follows (2).

We prove (2).
Suppose that $2 \nu < 3 \mu$.
Then there are no monomials of bi-degree $(2 \nu,4)$ which are                                                                                                                 divisible by $z^3$, and $z^2 w$ is of bi-degree $(2 \mu + \nu,4)$ with $2 \mu + \nu > 2 \nu$.
Thus $X$ is singular along $(x = y = w = 0) \cong \mbP^1$.
This is a contradiction and (2) is proved.

We prove (3).
Suppose that $2 \nu < 4 \lambda$.
Then, any monomial of bi-degree $(2 \nu,4)$ is divisible either $w$ or $x$, which implies that $X$ contains the sub WPS bundle $(x = w = 0) \subset P$.
This is impossible since $\rho (X) = 2$.
Proof of second assertion? 

We prove (4) and (5). 
We assume that $2 \nu < 4 \mu$.
Then the defining polynomial of $X$ can be written as
\[
z^3 (a_{2 \nu - 3 \mu} x + b_{2 \nu - 3 \mu - \lambda} y) + z^2 f (u,x,y,w) + z g (u,x,y,w) + h (u,x,y,w),
\]
where $a = a (u), b = b (u)$ are homogeneous polynomials of indicated degree and $f, g, h$ are of bi-degree $(2 \nu - 2 \mu,2)$, $(2 \nu - \mu,3)$, $(2 \nu,4)$, respectively.
Suppose in addition that $2 \nu < 3 \mu + \lambda$.
Then $b = 0$.
If $\deg a > 0$, then $X$ is singular along $(a = x = y = w = 0) \subset X$, which is non-empty.
Thus $\deg a = 0$, that is, $2 \nu = 3 \mu$, and (5) is proved.
Suppose that $2 \nu > 3 \lambda + \mu$.
Then $\deg a$ and $\deg b$ are positive.
It is easy to see that $X$ singular along $(a = b = x = y = w = 0)$, which is nonempty if $n \ge 4$.
Therefore we conclude $n = 3$ and (4) is proved.
\end{proof}

\begin{Lem} \label{lem:dP2ample}
Let the notation and assumption as in \emph{Lemma \ref{lem:dP2num}}.
\begin{enumerate}
\item If $-K_X$ is not ample, then $2 \nu - \lambda - \mu - (n-1) \ge 0$.
\item If $n = 3$, then $2 \nu - \lambda - \mu - 2 \ge 0$.
\end{enumerate}
\end{Lem}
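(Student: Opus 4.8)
The plan is to follow the proof of Lemma~\ref{lem:dP1ample}: read off the position of $-K_X$ inside the nef cone of $X$ from the nef cone of the ambient bundle $P = P_2(n,\lambda,\mu,\nu)$. First I would compute $-K_X$ by adjunction: since $-K_P \sim \mcO_P(n-1+\lambda+\mu+\nu,\,5)$ and $X \in |\mcO_P(2\nu,4)|$, writing $F = F_P|_X \in |\mcO_X(1,0)|$ and $D = D_P|_X \in |\mcO_X(0,1)|$ one gets
\[
-K_X \sim (n-1+\lambda+\mu-\nu)\,F + D .
\]
Since $\pi\colon X \to \mbP^{n-2}$ is a del Pezzo fibration, $\rho(X) = 2$, so the nef cone of $X$ is two-dimensional; the class $F$ is nef but not big, hence spans one extremal ray, while $D$ is relatively ample over $\mbP^{n-2}$, so the other extremal ray can be written uniquely as $\beta_0 F + D$ with $\beta_0 \in \mbQ$. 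Thus $\alpha F + D$ is ample on $X$ if and only if $\alpha > \beta_0$, and to prove (1) it is enough to bound $\beta_0$ from above.

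For that I would restrict a nef generator of the nef cone of $P$ to $X$, recalling that the latter cone is generated by $F$ and $\lambda_l F + a_l D$ with $\lambda_l/a_l = \max\{0,\lambda,\mu,\nu/2\} = \max\{\mu,\nu/2\}$ (here $\lambda \le \mu$) and that the restriction of a nef divisor to $X$ is nef. If $2\nu \ge 4\mu$, then $\nu F + 2D$ is nef on $P$, hence $\tfrac{\nu}{2}F + D$ is nef on $X$, which forces $\beta_0 \le \nu/2$; if in addition $-K_X$ is not ample then $n-1+\lambda+\mu-\nu \le \nu/2$, i.e.\ $2(n-1)+2\lambda+2\mu \le 3\nu$, and since $n-1+\lambda+\mu \ge 0$ this gives $2\nu \ge \tfrac{4}{3}(n-1+\lambda+\mu) \ge n-1+\lambda+\mu$, i.e.\ $2\nu - \lambda - \mu - (n-1) \ge 0$. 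If instead $2\nu < 4\mu$, then Lemma~\ref{lem:dP2num}(2) gives $3\mu \le 2\nu$, so $\mu \ge 1$ and $\nu > \mu$; now $\mu F + D$ is nef on $P$, hence on $X$, so $\beta_0 \le \mu$, and if $-K_X$ is not ample then $n-1+\lambda-\nu \le 0$, whence
\[
2\nu - \lambda - \mu - (n-1) = \bigl(\nu - \lambda - (n-1)\bigr) + (\nu - \mu) > 0 .
\]
This proves (1).

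For (2) I would argue purely arithmetically from Lemma~\ref{lem:dP2num}. Assuming $n = 3$, suppose for contradiction that $2\nu < \lambda + \mu + 2$. If $2\nu \ge 3\mu + \lambda$, then $3\mu + \lambda \le 2\nu < \lambda + \mu + 2$ forces $\mu < 1$, hence $\mu = \lambda = 0$, and then $2\nu < 2$ contradicts $\nu \ge 1$. If $2\nu < 3\mu + \lambda$, then Lemma~\ref{lem:dP2num}(5) gives $2\nu = 3\mu$, so $3\mu < \lambda + \mu + 2 \le 2\mu + 2$, i.e.\ $\mu < 2$; but $\mu = 0$ forces $\nu = 0$ and $\mu = 1$ forces $2\nu = 3$, both impossible. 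Hence $2\nu - \lambda - \mu - 2 \ge 0$, which is the inequality of (1) for $n = 3$ and is exactly what is needed for Corollary~\ref{cor:dP2}.

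The only steps requiring care are the standard facts used in (1): that $F$ together with the restriction to $X$ of the second nef generator of $P$ pin down $\beta_0$ (a nef class on $X$ with $D$-coefficient $1$ is $\beta_0 F + D$ plus a nonnegative multiple of $F$), and that restriction of nef divisors is nef. Neither is a genuine obstacle, so the substance of the lemma is the two-case bookkeeping in (1) together with the elementary manipulations of Lemma~\ref{lem:dP2num}; the only mild subtlety I anticipate is making sure the parity argument in (2) genuinely exhausts all cases.
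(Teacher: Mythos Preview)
Your proof is correct and follows essentially the same strategy as the paper: compute $-K_X$ by adjunction, bound the second ray of the nef cone of $X$ by restricting the nef generator of $P$ (splitting into the cases $\nu \ge 2\mu$ and $\nu < 2\mu$), and for (2) combine the inequalities $2\nu \ge 3\mu$ and $\lambda \le \mu$ from Lemma~\ref{lem:dP2num} to force a contradiction. Your case split in (2) via $2\nu \gtrless 3\mu+\lambda$ is slightly different from the paper's direct use of $\lambda \le \mu$, but both reduce immediately to $\mu \in \{0,1\}$ and the same parity obstruction.
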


\begin{proof}
Take $F_P \in |\mcO_P (1,0)|$, $D_P \in |\mcO_P (0,1)|$, and set $F = F_P|_X$, $D = D_P|_X$.
By adjunction, we have $-K_X \sim (n-1 + \lambda + \mu - \nu) F + D$. 

We prove (1).
Suppose first that $\nu \ge 2\mu$.
Then $|\mcO_P (\nu,2)|$ is base point free.
This implies that $\nu F + 2 D$ is nef.
Since $\rho (X) = 2$, a divisor $\alpha F + D$ is ample if $\alpha > \nu/2$, and the assertion follows immediately.
Suppose next that $\nu < 2\mu$.
Then $|\mcO_P (\mu,1)|$ is base point free.
This implies that $\lambda F + D$ is nef.
It follows that a divisor $\alpha F + D$ is ample if $\alpha > \lambda$, and we obtain the inequality $\lambda+ n-1 \le \nu$.
Combining this inequality with $2 \nu \ge 3\mu$, it is easy to check that the inequality $2 \nu \ge \mu+ \lambda + n-1$ holds.

We prove (2).
Assume to the contrary that $2 \nu - \lambda - \mu - 2 < 0$.
Then since $\lambda \le \mu$, we have $3 \mu \le 2 \nu \le 2 \mu + 1$.
Thus $\mu = 0,1$.
But this is impossible since $\nu > 0$.
\end{proof}

In the rest of the present section, we assume that $(n,\lambda,\mu,\nu)$ satisfies the assumption of Lemma \ref{lem:dP2num}.
We keep the following setting except in Subsection \ref{sec:dP2-3}.

\begin{Setting} \label{setting:dP2}
\begin{itemize}
\item The ground field $\K$ is algebraically closed and $\chara (\K) = 2$.
\item $X$ is a hypersurface of bi-degree $(2 \nu,4)$ in $P = P_2 (n,\lambda,\mu,\nu)$ defined by
\[
F := w^2 + f (u,x,y,z) = 0,
\]
where $f (u,x,y,z)$ is a general polynomial of bi-degree $(2 \nu,4)$.
\item $Z$ is the $\mbP^2$-bundle over $\mbP^{n-2}$ defined by
\[
\begin{pmatrix}
u_0 & \cdots & u_{n-2} & & x & y & z \\
1 & \cdots & 1 & | & 0 & \lambda & \mu \\
0 & \cdots & 0 & | & 1 & 1 & 1
\end{pmatrix}.
\]
\item $\mcL = \mcO_Z (\nu,2)$.
\item $\tau \colon X \to Z$ is the restriction of the projection $P \ratmap Z$, which is the double cover of $Z$ branched along $f \in H^0 (Z,\mcL^2)$.
\item $\mcM$ is the invertible sheaf on $X$ associated to the double cover $\tau$.
\end{itemize}
\end{Setting}

We define
\[
U_x = (x \ne 0) \subset Z, \ 
\Pi_y = (x = 0) \cap (y \ne 0) \subset Z, \ 
\Gamma_z = (x = y = 0) \subset Z,
\]
so that we have $Z = U_x \cup \Pi_y \cup \Gamma_z$.
We will analyze the critical points of $f \in H^0 (Z^{\circ},\mcL^2)$, which will be done in the following subsections.

\subsection{Case $\nu \ne 2 \mu$ and $\lambda,\mu,\nu$ does not satisfy $2 \nu = 3 \mu = 4 \lambda$} \label{sec:dP2-1}

\begin{Lem} \label{lem:dP3crit1}
The section $f \in H^0 (Z, \mcL^2)$ has only $($almost$)$ nondegenerate critical points on $Z$.
\end{Lem}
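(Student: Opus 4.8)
The plan is to treat $Z = U_x \cup \Pi_y \cup \Gamma_z$ stratum by stratum, showing that for the general $f$ of Setting~\ref{setting:dP2} every critical point lies on $U_x$ and is $($almost$)$ nondegenerate there. On $U_x$ I would invoke Lemma~\ref{lem:surjrest}: here $m = 1$, so $d = 4 \ge 3 = 3m$, and by Lemma~\ref{lem:dP2num}(1),(2) we have $2\nu \ge 3\mu \ge 3\lambda$ and $2\nu \ge 2$, so the numerical hypotheses of Lemma~\ref{lem:surjrest}(1) hold; if moreover $2\nu \ge 3$ (which fails only in the borderline subcase $\nu = 1$, forcing $\lambda = \mu = 0$) those of Lemma~\ref{lem:surjrest}(2) hold as well, and the remaining borderline subcase with $\dim Z = n$ odd I would dispatch by a direct check. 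Thus the $3$rd restriction map (if $n$ is even) or the $4$th restriction map (if $n$ is odd) of $\mcL^2$ is surjective at every point of $U_x$, and by the Remark following Lemma~\ref{lem:surjrest} the general $f$ has only nondegenerate, resp.\ almost nondegenerate, critical points on $U_x$.

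For $\Pi_y$ and $\Gamma_z$ the goal is to show the general $f$ has no critical point at all, so that $\Crit(f) \subset U_x$. When $\nu > 2\mu$, i.e.\ $2\nu > 4\mu$, this is immediate: since $\lambda \le \mu$ one gets $2\nu > 4\mu \ge 4\lambda$ and $2\nu > 4\mu \ge 3\lambda + \mu$, so the hypotheses of Lemma~\ref{lem:surjrest}(3) and~(4) are met, the $2$nd restriction maps of $\mcL^2$ are surjective along $\Pi_y$ and $\Gamma_z$, and since these strata have dimension $< n$ the Remark after Lemma~\ref{lem:surjrest} rules out critical points there.

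The remaining and main case is the boundary range $3\mu \le 2\nu < 4\mu$ (here $2\nu \ge 3\mu$ is Lemma~\ref{lem:dP2num}(2) and $\nu = 2\mu$ is excluded by hypothesis), which I would handle by an explicit coordinate computation as in the proof of Lemma~\ref{lem:dP2num}(4),(5). Writing
\[
f = z^3\bigl(a(u)x + b(u)y\bigr) + z^2 g_2 + z g_1 + g_0 ,
\]
with $\deg a = 2\nu - 3\mu$, $\deg b = 2\nu - 3\mu - \lambda$, and $g_i \in (x,y)^{4-i}$, and passing to an affine chart $U_{i,z}$ around a point of $\Gamma_z$, everything of order $\ge 2$ in the local coordinates $x,y$ lies in $(x,y)^2$, so the linear part of $f$ there is $\bar a x + \bar b y$; hence $f$ can have a critical point on $\Gamma_z$ only along $(a = b = 0)$. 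If $2\nu = 3\mu$ then $a$ is a nonzero constant and this locus is empty; if $2\nu > 3\mu$ then $b \not\equiv 0$ (otherwise Lemma~\ref{lem:dP2num}(5) would force $2\nu = 3\mu$), and for general $f$ the set $(a = b = 0) \cap \Gamma_z$ is empty — when $3\mu < 2\nu < 4\mu$ and $2\nu \ne 3\mu + \lambda$ because then $n = 3$ by Lemma~\ref{lem:dP2num}(4) and $\Gamma_z \cong \mbP^1$, and when $2\nu = 3\mu + \lambda$ because then $\deg b = 0$ and $b$ is a nonzero constant. An entirely analogous analysis on the charts $U_{i,y}$ around $\Pi_y$ — now using $2\nu \ge 4\lambda$ from Lemma~\ref{lem:dP2num}(3), and the exclusion of the triple equality $2\nu = 3\mu = 4\lambda$, to guarantee that the relevant coefficient polynomials do not vanish identically — shows that the general $f$ has no critical point on $\Pi_y$ either.

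The hard part will be this last step: identifying precisely which monomials of a general $f$ survive after restriction to the affine charts around $\Pi_y$ and $\Gamma_z$ in the boundary range, and checking that the corresponding critical loci are empty. This is exactly where the standing hypotheses $\nu \ne 2\mu$ and $2\nu = 3\mu = 4\lambda$ being excluded, together with the numerical constraints of Lemma~\ref{lem:dP2num} (which encode the smoothness of the characteristic-zero del Pezzo fibration), are needed to prevent $\Crit(f)$ from acquiring a positive-dimensional component on these lower strata.
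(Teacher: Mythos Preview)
Your plan is sound on $U_x$ and on $\Gamma_z$, and it matches the paper's argument there. The gap is on $\Pi_y$. Your assertion that ``an entirely analogous analysis'' shows the general $f$ has \emph{no} critical point on $\Pi_y$ is false in the subcase $2\nu < 3\lambda + \mu$. By Lemma~\ref{lem:dP2num}(5) this forces $2\nu = 3\mu$, and then one checks that every monomial $y^{4-j}z^j$ with $j\ge 1$ has coefficient of negative degree in $u$, so restricted to $(x=0)$ one has $f = a_{2\nu-4\lambda}(u)\,y^4$. In an affine chart around a point of $\Pi_y$ the partial $\partial f/\partial z$ therefore vanishes identically along $(x=0)$; the critical locus on $\Pi_y$ is cut out by $\nabla_u a = 0$ together with the vanishing of the coefficient of $xy^3$, which is only $n-1$ independent conditions on the $(n-1)$-dimensional stratum $\Pi_y$. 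For $\deg a = 2\nu - 4\lambda \ge 2$ and $n$ even (e.g.\ $(\lambda,\mu,\nu,n)=(7,10,15,4)$) this locus is a nonempty finite set, so the general $f$ \emph{does} have critical points on $\Pi_y$.

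What the paper does in this subcase is not to exclude these critical points but to show they are still (almost) nondegenerate: writing $f = a(u)y^4 + x\,h(u,y,z) + x^2 g$ with $h = z^3 + \cdots$, the critical points lie over the critical points of $a\in H^0(\mbP^{n-2},\mcO(2\nu-4\lambda))$, and for general $a$ these are (almost) nondegenerate; one then checks, using that $\partial h/\partial z$ can be arranged to be nonzero at each such point, that the Hessian of $f$ has the required rank. This is the substantive step your outline is missing, and the exclusions $\nu\ne 2\mu$ and $2\nu=3\mu=4\lambda$ alone do not prevent this situation --- they only ensure $\deg a \ge 1$ (in fact $\ge 2$ by a parity argument), not that the $zy^3$-coefficient survives.
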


\begin{proof}
We first treat the case $\nu = 1$.
Then $\lambda = \mu = 0$ by Lemma \ref{lem:dP2num}.
Hence $Z \cong \mbP^{n-2} \times \mbP^2$ and $\mcL^2 \cong \mcO_{\mbP^{n-2} \times \mbP^2} (2,4)$.
We see that $2 = 2 \nu = \max \{2,2 \lambda,2\mu\}$.
If $n$ is even, then, by Lemma \ref{lem:surjrest}, a general $f \in H^0 (Z,\mcL^2)$ has only nondegenerate critical points on $Z$.
We assume that $n$ is odd.
Let $\msp \in Z$ be a point.
Replacing coordinates, we may assume $\msp = (1\!:\!0\!:\!\cdots\!:\!0 ; 1\!:\!0\!:\!0)$.
Then, since the 2nd and 3rd restriction maps of $\mcL^2$ at $\msp$ are surjective, $n$ independent conditions are imposed for a section in $H^0 (Z,\mcL^2)$ to have a critical point at $\msp$.
Consider a section of the form
\[
g = (u_1 u_2 + u_3 u_4 + \cdots + u_{n-4} u_{n-3}) x^4 + u_0 u_{n-2} x^3 y + u_0^2 x z^3 + \cdots,
\]
which has an almost nondegenerate critical point at $\msp$.  
This shows that additional conditions are imposed for a section to have a critical point at $\msp$ which is worse than almost nondegenerate critical point.
By counting dimension, we conclude that a general $f \in H^0 (Z,\mcL^2)$ has only almost nondegenerate critical points on $Z$.
In the rest of the proof, we assume that $\nu \ge 2$.

We have $2 \nu \ge \max \{3, 3 \lambda,3 \mu\}$ and, by Lemma \ref{lem:surjrest}, the 4th restriction map of $\mcL^2$ is surjective at any point $\msp \in U_x$, and it remains to consider critical points on $\Pi_y \cup \Gamma_z$.
We claim that $2 \nu \ge 4 \lambda + 1$.
Suppose that $2 \nu < 4 \lambda + 1$.
Then, by Lemma \ref{lem:dP2num}.(1), we have $2 \nu = 4 \lambda$.
Since we are assuming that $2 \nu = 3 \mu = 4 \lambda$ does not hold in this subsection, we have $2 \nu \ne 3 \mu$, which implies $4 \lambda = 2 \nu \ge 3 \mu + \lambda$ by Lemma \ref{lem:dP2num}.(5).
This implies $\lambda \ge \mu$, hence $\lambda = \mu$.
This is a contradiction since we are assuming $\nu \ne 2 \mu$. 
We divide the proof into several cases.

Suppose that $2 \nu \ge 3 \lambda + \mu$.
Then, by Lemma \ref{lem:surjrest}, the 2nd restriction map of $\mcL^2$ is surjective at any point $\msp \in \Pi_y$.
Thus a general $f \in H^0 (Z, \mcL^2)$ has only (almost) nondegenerate critical points on $Z \setminus \Gamma_z$ and it remains to consider critical point along $\Gamma_z$.
If $2 \nu > 4 \mu$, then we are done by Lemma \ref{lem:surjrest}.
In the following we show that $f$ does not have a critical point along $\Gamma_z$ assuming that $2 \nu < 4 \mu$.
In this case we can write
\[
f = a_{2 \nu - 3 \mu} x z^3 + b_{2 \nu - 3 \mu - \lambda} y z^3 + g,
\]
where $g (u,x,y,z) \in (x,y)^2$.
This means that the set of critical points of $f$ contained in $\Gamma_z$  is contained in $C := (a = b = 0) \cap \Gamma_z$.
Clearly $C = \emptyset$ if either $a$ or $b$ is a constant, that is, either $2 \nu = 3 \mu$ or $2 \nu = 3 \mu + \lambda$.
If $\nu \ne 3 \mu, 3 \mu + \lambda$, then $n = 3$ and the $C$ is also empty because $a$ and $b$ are general.
Thus $f$ cannot have a critical point on $\Gamma_z$.

Suppose that $2 \nu < 3 \lambda + \mu$.
Then we have $2 \nu = 3 \mu$ by Lemma \ref{lem:dP2num}.
We can write
\[
f = a_{2 \nu - 4 \lambda} y^4 + x (z^3 + b_{\mu - \lambda} z^2 y + c_{2 \mu - 2 \lambda} z y^2 + d_{3 \mu - 3 \lambda} y^3) + g,
\]
where $a,b,c,d$ are homogeneous polynomials in variables $u$ of the indicated degree and $g = g (u,x,y,z) \in (x^2)$ is of bi-degree $(2 \nu, 4)$.
Note that $\deg a = 2 \nu - 4 \lambda \ge 2$.
We set $h = z^3 + b z^2 y + c z y^2 + d y^3$.
It is easy to check that
\[
\Crit (f) \cap (x = 0) = C \cap (x = h = 0) \subset Z,
\]
where
\[
C = \left( \frac{\prt a}{\prt u_0} = \cdots = \frac{\prt a}{\prt u_{n-2}} = 0 \right) \subset Z.
\] 
The set $C$ is a union of fibers of the $\mbP^2$-bundle $Z \to \mbP^{n-2}$ over critical points of $a$, viewed as a section in $H^0 (\mbP^{n-2}, \mcO_{\mbP^{n-2}} (2 \nu - 4 \lambda))$.
Since $a$ is general, we see that $C$ consists of finitely many fibers and thus $\Crit (f)$ consists of finitely many closed points.

Now we consider the case when $n$ is odd and $\deg a = 2$.
In this case we claim $C = \emptyset$.
Indeed, then the Hessian matrix of the quadric in even number of variables $u_0,\dots,u_{n-2}$ is invertible, and this implies that the set of critical points of $a \in H^0 (\mbP^{n-2}, \mcO_{\mbP^{n-2}} (2))$ is empty. 
Thus we have $\Crit (f) \cap (x = 0) = \emptyset$.

In the following we assume that either $n$ is even or $n$ is odd and $\deg a > 2$. 
Then we see that the section $a$, viewed as a section on $\mbP^{n-2}$, has only (almost) nondegenerate critical points. 
Moreover we may assume that $\prt h/\prt z$ does not vanish at any point of $\Crit (f) \cap (x = 0)$.
This is indeed possible by choosing $c$ so that $C \cap (c = 0) = \emptyset$.
Now let $\msp \in \Crit (f) \cap (x = 0)$ be a point.
We may assume that $\msp = (1\!:\!0\!:\!\cdots\!:\!0; 0\!:\!1\!:\!0) \in Z$.
We can choose $u_1,\dots,u_{n-2}, x, z$ as local coordinates of $Z$ at $\msp$.
By using these coordinates, the restriction of $f$ to $\msp$ is of the form 
\[
q (u_1,\dots,u_{n-2}) + x (z + \ell (u_1,\dots,u_{n-2}) + e_2) + x^2 (\alpha + e_1),
\]
where
\[
q =
\begin{cases}
u_1 u_2 + u_3 u_4 + \cdots + u_{n-3} u_{n-2}, & \text{if $n$ is even}, \\
u_1^2 + u_2 u_3 + \cdots + u_{n-3} u_{n-2} + u_1^3, & \text{if $n$ is odd},
\end{cases}
\]
$\alpha \in \K$, $\ell$ is a linear form in $u_1,\dots,u_{n-2}$ and $e_i = e_i (u_1,\dots,u_{n-2},x,z) \in \mfm_{\msp}^i$, where $\mfm_{\msp} = (u_1,\dots,u_{n-2},x,z)$.
It is now clear that $f$ has an (almost) nondegenerate critical points at $\msp$ and the proof is completed.
\end{proof}

\subsection{Case $\nu = 2 \mu$} \label{sec:dP2-2}

\subsubsection{Subcase $2 \nu = 4 \mu$ and $\mu > \lambda$}

\begin{Lem} \label{lem:dP3crit2}
The section $f \in H^0 (Z, \mcL^2)$ has only $($almost$)$ nondegenerate critical points on $Z \setminus \Gamma_z = U_x \cup \Pi_y$.
\end{Lem}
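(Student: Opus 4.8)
\textbf{Proof proposal for Lemma \ref{lem:dP3crit2}.}

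The plan is to follow the strategy of Lemma \ref{lem:dP3crit1}, exploiting the surjectivity statements of Lemma \ref{lem:surjrest}, but now paying attention to the fact that the equality $\nu = 2\mu$ forces some of the surjectivity hypotheses to fail narrowly. Since we only claim the conclusion on $U_x \cup \Pi_y$ in this subcase, the critical points lying on $\Gamma_z$ are deliberately left to a later treatment, so I do not need to control them here. First I would dispose of $U_x$: by Lemma \ref{lem:dP2num} we have $2\nu = 4\mu \ge 3\mu$ and also $2\nu \ge \max\{3, 3\lambda, 3\mu\}$ (using $\nu\ge 1$, and if $\nu=1$ then $\lambda=\mu=0$ so everything is trivial), hence by Lemma \ref{lem:surjrest}(2) the 4th restriction map of $\mcL^2 = \mcO_Z(2\nu,4)$ is surjective at every point of $U_x$. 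Therefore a general $f$ has only nondegenerate critical points there when $\dim Z = n$ is even, and only almost nondegenerate critical points there when $n$ is odd, exactly as in the proof of Lemma \ref{lem:dP3crit1} (with the same explicit model section exhibited to rule out worse degenerations in the odd case).

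Next I would handle $\Pi_y$. Here the relevant part of Lemma \ref{lem:surjrest} is (3), whose hypotheses in our situation read $\mu \ge m\lambda = \lambda$ (true, since $\lambda \le \mu$ by our normalization and we are in the subcase $\mu > \lambda$) and $\delta = 2\nu \ge \max\{d\lambda + 1, (d-m)\lambda + \mu\} = \max\{4\lambda + 1, 3\lambda + \mu\}$. The inequality $2\nu = 4\mu \ge 3\lambda + \mu$ is equivalent to $3\mu \ge 3\lambda$, which holds. The inequality $2\nu = 4\mu \ge 4\lambda + 1$ holds because $\mu > \lambda$ gives $4\mu \ge 4\lambda + 4 > 4\lambda + 1$. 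Hence the 2nd restriction map of $\mcL^2$ is surjective at every point of $\Pi_y$, so as in the remark following Lemma \ref{lem:surjrest} this imposes $n$ independent conditions for a section to acquire a critical point at a given point of $\Pi_y$, while $\dim \Pi_y = n - 1 < n$; therefore a general $f$ has no critical point on $\Pi_y$ at all, which trivially gives the claim there.

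Assembling the two cases gives the statement: a general $f \in H^0(Z,\mcL^2)$ has only (almost) nondegenerate critical points on $U_x \cup \Pi_y = Z \setminus \Gamma_z$. The main (and really the only) delicate point is bookkeeping the numerical inequalities to check that the hypotheses of Lemma \ref{lem:surjrest}(2) and (3) are met under the standing assumptions $\nu = 2\mu$, $\mu > \lambda$, and the conclusions of Lemma \ref{lem:dP2num}; the argument that surjectivity of the restriction maps yields the desired genericity of critical points is identical to, and can be quoted from, the proof of Lemma \ref{lem:dP3crit1} (and ultimately \cite[V.5]{Kollar}). I would write the proof as: ``This is proved in the same way as Lemma \ref{lem:dP3crit1}, once we check the numerical hypotheses,'' followed by the short verification of the inequalities above.
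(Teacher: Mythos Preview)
Your proposal is correct and follows essentially the same approach as the paper: verify the numerical hypotheses of Lemma~\ref{lem:surjrest}(2) and (3) and conclude. The paper's proof is even terser, simply noting that $\nu \ge 2$ (since $\nu = 2\mu$ and $\mu > \lambda \ge 0$ force $\mu \ge 1$) and then listing the inequalities $2\nu \ge \max\{3,3\lambda,3\mu\}$ and $2\nu \ge \max\{4\lambda+1,3\lambda+\mu\}$; your worry about $\nu = 1$ and the explicit model section from Lemma~\ref{lem:dP3crit1} is unnecessary here, as that case cannot occur in this subcase and the surjectivity of the 4th restriction map already suffices.
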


\begin{proof}
Note that we have $\nu \ge 2$ (and $\mu \ge 1$).
Hence $2 \nu \ge \max \{3, 3 \lambda, 3 \mu\}$ and $2 \nu \ge \max \{4 \lambda + 1,3 \lambda + \mu\}$ and the assertion follows from Lemma \ref{lem:surjrest}.
\end{proof}

Let $\Crit (f) \subset Z$ be the set of critical points of $f$ and set $C_1 = \Crit (f) \cap (Z \setminus \Gamma_z)$ and $C_2 = \Crit (f) \cap \Gamma_z$, so that we have $\Crit (f) = C_1 \cup C_2$.
By Lemma \ref{lem:dP3crit2}, $C_1$ consists of isolated points which are (almost) nondegenerate critical points.
We need to analyze $C_2$ and construct a resolution of singularities of $X$ along $\tau^{-1} (C_2)$.

We can write
\[
f = \alpha z^4 + a_{\mu} x z^3 + b_{\mu - \lambda} y z^3 + g (u,x,y,z),
\]
where $a_{\mu}, b_{\mu - \lambda}$ are polynomials in variables $u$ of indicated degree and $g (u,x,y,z) \in (x,y)^2$ is of bi-degree $(4 \mu,4)$. 
It is easy to see that $C_2 = (x = y = a_{\mu} = b_{\mu - \lambda} = 0)$ and $C_2$ is covered by $U_{i,z} = (u_i \ne 0) \cap (z \ne 0) \subset Z$ for $i = 0,\dots,n-2$.
In the following we work with $U_{0,y}$ and analyze the restriction of $f$ to $U_{0,z}$.
The analysis for the other $U_{i,z}$ is completely the same.

The open set $U_{0,z}$ is isomorphic to $\mbA^n$ with affine coordinates $u_1,\dots,u_{n-2},x,y$ and we have
\[
f|_{U_{0,z}} = \alpha + a_{\mu} x + b_{\mu - \lambda} y + \bar{g} \sim_2 a_{\mu} x + b_{\mu - \lambda} y + \bar{g},
\] 
where $\bar{g} = g (1,u_1,\dots,u_{n-2},x,y,1)$.
Since $a, b$ are general and $\bar{g} \in (x,y)^2$, the polynomial $f|_{U_{0,z}}$ satisfies Condition \ref{cond:resolposcrit2}.
Thus, by Lemma \ref{lem:resol2}, the blowup of $X$ along $\tau^{-1} (C_2)$ is universally $\CH_0$-trivial, resolves the singularity of $X$ along $\tau^{-1} (C_2)$ and pulls back $\mcM$ into a subsheaf of $\Omega^{n-1}$.

\subsubsection{Subcase $2 \nu = 4 \mu$ and $\mu = \lambda$}

\begin{Lem} \label{lem:dP3crit3}
The section $f \in H^0 (Z, \mcL^2)$ has only $($almost$)$ nondegenerate critical points on $U_x$.
\end{Lem}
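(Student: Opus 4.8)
The plan is to reduce to an application of Lemma~\ref{lem:surjrest} on the open set $U_x$, exactly as in the proof of Lemma~\ref{lem:dP3crit2}. First I would record the numerical situation: since $\nu = 2\mu$ and $2\nu = 4\mu$ with $\mu = \lambda$, Lemma~\ref{lem:dP2num}.(1) forces $\nu \ge 2$, hence $\mu \ge 1$, and therefore $2\nu = 4\mu \ge \max\{3, 3\lambda, 3\mu\}$ (the bound $4\mu \ge 3$ is clear, and $4\mu = 4\lambda \ge 3\lambda$, $4\mu \ge 3\mu$). Also $d = 4 \ge 3 = 3m$ since $m = 1$ here, and $0 \le \lambda, \mu$ by our normalization. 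Thus the hypotheses of Lemma~\ref{lem:surjrest}.(2) are satisfied with $\delta = 2\nu$, $d = 4$.

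Next I would invoke Lemma~\ref{lem:surjrest}.(2) to conclude that the $4$th restriction map $r_4(\msp)$ of $\mcL^2 = \mcO_Z(2\nu, 4)$ is surjective at every point $\msp \in U_x$. By the remark following Lemma~\ref{lem:surjrest}, surjectivity of the $3$rd (resp.\ $4$th) restriction map of an even power $\mcL^2$ on $U_x$ guarantees that a general section $f \in H^0(Z, \mcL^2)$ has only nondegenerate critical points (resp.\ almost nondegenerate critical points) on $U_x$, depending on the parity of $\dim Z = n$. Concretely: if $n$ is even we get nondegenerate critical points from the $3$rd restriction map (and a fortiori from the $4$th), and if $n$ is odd the $4$th restriction map gives almost nondegenerate critical points. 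Either way, $f$ has only (almost) nondegenerate critical points on $U_x$, which is the assertion.

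The content of this lemma is genuinely minimal — it is a bookkeeping step — so there is no real obstacle; the only thing to be careful about is the parity case division for $\dim Z$, which is handled uniformly by the remark after Lemma~\ref{lem:surjrest}. The substantive analysis of critical points on the complement $\Pi_y \cup \Gamma_z$ (which will not follow from Lemma~\ref{lem:surjrest} in this subcase, because when $\mu = \lambda$ the degree inequalities needed for the $2$nd restriction map on $\Pi_y$ fail) is deferred to the subsequent discussion, where one will write down the defining equation explicitly, locate $\Crit(f)$ along $\Pi_y$ and $\Gamma_z$, and apply Lemma~\ref{lem:resol1} or~\ref{lem:resol2} to build a universally $\CH_0$-trivial resolution there.
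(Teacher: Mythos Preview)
Your proposal is correct and follows essentially the same approach as the paper: verify that $\nu \ge 2$ (hence $\mu \ge 1$) so that $2\nu \ge \max\{3,3\lambda,3\mu\}$, and then apply Lemma~\ref{lem:surjrest} to conclude. The paper's own proof is exactly this one-line argument; your extra commentary on the parity split and on what happens along $\Pi_y \cup \Gamma_z$ is accurate but not needed for the lemma as stated.
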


\begin{proof}
Note that we have $\nu \ge 2$ (and $\mu \ge 1$).
Hence $2 \nu \ge \max \{3, 3 \lambda, 3 \mu\}$ and the assertion follows from Lemma \ref{lem:surjrest}.
\end{proof}

Let $\Crit (f) \subset Z$ be the set of critical points of $f$ and set $C_1 = \Crit (f) \cap U_x$ and $C_2 = \Crit (f) \cap (Z \setminus U_x)$, so that we have $\Crit (f) = C_1 \cup C_2$.
By Lemma \ref{lem:dP3crit3}, $C_1$ consists of isolated points which are (almost) nondegenerate critical points.
We need to analyze $C_2$ and construct a resolution of singularities of $X$ along $\tau^{-1} (C_2)$.

We can write
\[
f = \alpha y^4 + \beta y^3 z + \gamma y^2 z^2 + \delta y z^3 + \varepsilon z^4 + x (a y^3 + b y^2 z + c y z^2 + d z^3) + g,
\]
where $\alpha,\dots,\varepsilon \in \K$, $a,\dots,d$ are polynomials in variables of degree $\mu > 0$ and $g = g (u,x,y,z) \in (x^2)$ if of bi-degree $(4 \mu,4)$.
Replacing $z$ and rescaling $y$, we assume $\delta = 0$ and $\beta = 1$.
We write $g = x^2 z^2 e + h$, where $e = e (u)$ and $h = h (u,x,y,z) \in (x^2) \cap (x,y)^3$.
It is then easy to see that $C_2 = (x = y = d = 0)$ and $C_2$ is covered by the open sets $U_{i,z} = (u_i \ne 0) \cap (z \ne 0) \subset Z$ for $i = 0,\dots,n-2$.
In the following we work with $U_{0,z}$ and analyze the restriction of $f$ to $U_{0,z}$.

The open subset $U_{0,z}$ is isomorphic to $\mbA^n$ with affine coordinates $u_1,\dots,u_{n-2},x,y$ and we have
\[
f|_{U_{0,z}} \sim_2 \bar{d} x + \bar{e} x^2 + \bar{c} x y + y^3 + (\bar{a} x y^3 + \bar{b} x y^2 + \bar{h}).
\]
The hypersurface in $\mbA^{n-2}_u$ defined by $\bar{d} = 0$ is nonsingular since $d$ is general, and $h \in (x^2) \cap (x,y)^3$.
Thus, by Lemma \ref{lem:resol2}, the blowup of $X$ along $\tau^{-1} (C_2)$ is universally $\CH_0$-trivial, resolves the singularity of $X$ along $\tau^{-1} (C_2)$ and pulls back $\mcM$ into a subsheaf of $\Omega^{n-1}$.

The following is the conclusion of Sections \ref{sec:dP2-1} and \ref{sec:dP2-2}. 

\begin{Prop} \label{prop:exitCHresoldP2}
Suppose that $(n,\lambda,\mu,\nu)$ satisfies the assumption of \emph{Lemma \ref{lem:dP2num}} and let $X$ be as in \emph{Setting \ref{setting:dP2}}.
Then there exists a universally $\CH_0$-trivial resolution $\varphi \colon Y \to X$ of singularities of $X$ such that $\varphi^*\mcM \inj \Omega_Y^{n-1}$, and we have
\[
\mcM \cong \mcO_X (2 \nu - \lambda - \mu - (n-1), 1).
\]
\end{Prop}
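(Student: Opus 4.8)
The plan is to patch the critical-point analyses of Sections~\ref{sec:dP2-1} and~\ref{sec:dP2-2} into a single global universally $\CH_0$-trivial resolution, and then to read off $\mcM$ from the formula for the sheaf $\mcQ$ recalled in Section~\ref{sec:cyccov}. The starting point is that, since $\chara (\K) = 2$, the double cover $X = (w^2 + f = 0)$ of the smooth variety $Z$ is singular precisely along $\tau^{-1} (\Crit (f))$, with $\tau$ restricting to an isomorphism $\Sing (X) \to \Crit (f)$; hence resolving $X$ amounts to resolving it along $\tau^{-1} (\Crit (f))$.

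First I would split according to the two subsections. If $(\lambda,\mu,\nu)$ falls under Section~\ref{sec:dP2-1}, Lemma~\ref{lem:dP3crit1} tells us that $f$ has only (almost) nondegenerate critical points on $Z$, and Proposition~\ref{prop:existresol} yields the desired $\varphi \colon Y \to X$ with $\varphi^* \mcM \inj \Omega_Y^{n-1}$ at once. If $\nu = 2\mu$, we are in one of the two subcases of Section~\ref{sec:dP2-2}, where $\Crit (f) = C_1 \sqcup C_2$ with $C_1$ a finite set of (almost) nondegenerate critical points and $C_2$ a smooth subvariety, namely $(x = y = a_\mu = b_{\mu-\lambda} = 0)$, resp.\ $(x = y = d = 0)$, which is disjoint from $C_1$ for general $f$. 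I would take $\varphi$ to be the composite of the blowup of $X$ at the points of $\tau^{-1} (C_1)$ --- which resolves those singularities and lifts $\mcM$ into $\Omega^{n-1}$ just as at (almost) nondegenerate critical points (Proposition~\ref{prop:existresol}, \cite[Proposition 4.1]{Okada}) --- with the blowup of $X$ along the smooth centre $\tau^{-1} (C_2)$, for which the chart computations of Section~\ref{sec:dP2-2} show that each restriction $f|_{U_{i,z}}$ satisfies Condition~\ref{cond:resolposcrit2}, resp.\ Condition~\ref{cond:resolposcrit1}; Lemma~\ref{lem:resol2}, resp.\ Lemma~\ref{lem:resol1}, then gives smoothness along the exceptional divisor, universal $\CH_0$-triviality (Remark~\ref{rem:CHtriv}), and the inclusion $\varphi^* \mcM \inj \Omega^{n-1}$. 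Since $\tau^{-1} (C_1)$ and $\tau^{-1} (C_2)$ are disjoint, the two blowups do not interact, $\varphi$ is a resolution of singularities of $X$, and it is universally $\CH_0$-trivial as a composition of such morphisms (\cite[Proposition 1.8]{CTP}).

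It remains to identify $\mcM$. Since $f$ is general, the branch divisor $(f = 0) \subset Z$ is reduced, so $\mcM \cong \tau^* (\omega_Z \otimes \mcL^2)$ by \cite[Lemma V.5.9]{Kollar}. The $\mbP^2$-bundle $Z$ satisfies $-K_Z \sim \mcO_Z (n - 1 + \lambda + \mu, 3)$ --- the sum of the weights of the variables $u_0, \dots, u_{n-2}, x, y, z$ --- so $\omega_Z \cong \mcO_Z (-(n-1) - \lambda - \mu, -3)$; combining this with $\mcL = \mcO_Z (\nu, 2)$ and $\tau^* \mcO_Z (\alpha,\beta) \cong \mcO_X (\alpha,\beta)$ gives $\mcM \cong \mcO_X (2\nu - \lambda - \mu - (n-1), 1)$. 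The proposition is ultimately an assembly step: the substantive work is the critical-point analysis and the explicit blowup models of the earlier sections, and the one point that needs care is that the resolution of $X$ along $\tau^{-1} (C_2)$, constructed chart by chart over the $U_{i,z}$, genuinely globalizes --- which it does, because the chart descriptions patch by the evident symmetry and because universal $\CH_0$-triviality of $\varphi$ together with the lifting $\varphi^* \mcM \inj \Omega^{n-1}$ are conditions that are local over $X$.
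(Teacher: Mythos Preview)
Your proposal is correct and follows essentially the same approach as the paper: the paper's own proof simply says that the existence of $\varphi$ follows from the results of Sections~\ref{sec:dP2-1}, \ref{sec:dP2-2} and Proposition~\ref{prop:existresol}, and then computes $\mcM \cong \tau^*(\omega_Z \otimes \mcL^2)$ from $\omega_Z \cong \mcO_Z(-(n-1)-\lambda-\mu,-3)$ and $\mcL \cong \mcO_Z(\nu,2)$. Your write-up spells out more of the structure (the disjointness of $C_1$ and $C_2$, the globalization of the chart-wise blowup, the reducedness of the branch divisor), and you correctly match the subcase $\mu = \lambda$ with Condition~\ref{cond:resolposcrit1} and Lemma~\ref{lem:resol1} --- the paper's text there cites Lemma~\ref{lem:resol2}, which appears to be a slip.
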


\begin{proof}
The existence of $\varphi \colon Y \to X$ follows from the results of Sections \ref{sec:dP2-1}, \ref{sec:dP2-2} and Proposition \ref{prop:existresol}.
We have $\omega_Z \cong \mcO_Z (-(n-1)-\lambda-\mu,-3)$ and $\mcL \cong \mcO_Z (\nu,2)$.
Thus
\[
\mcM \cong \tau^* (\omega_Z \otimes \mcL^2) \cong \mcO_X (2 \nu - \lambda - \mu - (n-1),1),
\]
and the proof is completed.
\end{proof}

\subsection{Case $2 \nu = 3 \mu = 4 \lambda$} \label{sec:dP2-3}

In this subsection we keep the following setting.

\begin{Setting} \label{setting:dP2-3}
\begin{itemize}
\item The ground field $\K$ is algebraically closed and $\chara (\K) = 3$.
\item $X$ is a hypersurface in $P = P_2 (n,\lambda,\mu,\nu)$ defined by
\[
F := z^3 x + f (u,x,y,w) = 0,
\]
where $f (u,x,y,w)$ is a general polynomial of bi-degree $(2 \nu,4)$.
\item $R$ is the $\mbP (1,1,3,2)$-bundle over $\mbP^{n-2}$ defined by
\[
\begin{pmatrix}
u_0 & \cdots & u_{n-2} & & x & y & \bar{z} & w \\
1 & \cdots & 1 & | & 0 & \lambda & 3 \mu & \nu \\
0 & \cdots & 0 & | & 1 & 1 & 3 & 2
\end{pmatrix}.
\]
\item $Z$ is the hypersurface in $R$ defined by $\bar{F} := \bar{z} x + f = 0$.
\item $\mcL = \mcO_Z (\mu,1)$.
\item $\tau \colon X \to Z$ is the restriction of the triple cover $P \to R$, which is a triple cover of $Z$ branched along $\bar{z} \in H^0 (Z,\mcL^3)$.
\item $Z^{\circ} = Z \setminus (x = y = w = 0) \cap Z$, $X^{\circ} = \tau^{-1} (Z^{\circ})$ and $\tau^{\circ} = \tau|_{X^{\circ}}$.
\end{itemize}
\end{Setting}

\begin{Lem} \label{lem:nonsingZdP2-3}
$Z^{\circ}$ is nonsingular and $X$ is nonsingular along $X \setminus X^{\circ}$.
\end{Lem}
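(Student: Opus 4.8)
The plan is to verify both nonsingularity statements by the Jacobian criterion in the standard affine charts, after first checking that $Z^{\circ}$ (respectively the locus $X\setminus X^{\circ}$) avoids the singular locus of the ambient toric bundle $R$ (respectively $P$); in the case of $X$ it will in fact be immediate that a partial derivative of the defining equation is a non-vanishing unit along $X\setminus X^{\circ}$. Throughout I will use two elementary observations forced by the bi-degree $(2\nu,4)$ together with the relation $2\nu=3\mu=4\lambda$: first, every monomial of $f$ has fibre-degree $4$ in $x,y,w$, and a short computation shows that this forces $f(u,0,0,w)=\gamma w^{2}$, $f(u,0,1,w)=\alpha+\beta w+\gamma w^{2}$ and $f(u,0,0,0)=0$, where $\alpha,\beta,\gamma$ are (general) constants; second, $\Sing(R)$ and $\Sing(P)$ are contained in the locus $(x=y=0)$ of the two coordinates of smallest fibre-weight.

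From these observations, $Z\cap(x=y=0)=(x=y=\gamma w^{2}=0)=(x=y=w=0)$ because $w\neq0$ forces $\gamma w^{2}\neq0$ for general $f$; hence $Z\cap\Sing(R)\subseteq(x=y=w=0)$, so $Z^{\circ}$ lies in the smooth locus of $R$ and $Z^{\circ}=(Z\cap(x\neq0))\cup(Z\cap(x=0)\cap(y\neq0))$. On the chart $(x\neq0)$ the equation $\bar z x+f=0$ presents $Z$ as the graph $\bar z=-f/x$, so $Z\cap(x\neq0)$ is an affine space and is nonsingular with no genericity needed. On $(x=0)\cap(y\neq0)$, working in a chart with $y=1$, the partial of the equation with respect to $\bar z$ equals $x$, forcing $x=0$ at any singular point; the partial with respect to $x$ then determines $\bar z$; the partials with respect to the $u_{i}$ vanish automatically since the restriction of $f$ to $(x=0)$ is the polynomial $q=\alpha+\beta w+\gamma w^{2}$ in $w$ alone; and the partial with respect to $w$ vanishes only where $q$ and $q'$ have a common zero, i.e.\ where $q$ is not squarefree. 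For general $f$ this does not occur, so $Z$ is nonsingular on $(x=0)\cap(y\neq0)$, and therefore $Z^{\circ}$ is nonsingular.

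For the smoothness of $X$, note first that $f(u,0,0,0)=0$ gives $(x=y=w=0)\subset X$, and that $\tau$ fixes the coordinates $x,y,w$, so $X\setminus X^{\circ}=(x=y=w=0)$, a locus disjoint from $\Sing(P)\subseteq(x=y=z=0)$ since $z\neq0$ on it. The key point is that $\partial f/\partial x$ lies in the ideal $(x,y,w)$: differentiating once a monomial of fibre-degree $4$ in $x,y,w$ leaves a monomial still divisible by $x$, $y$, or $w$. Consequently $\partial F/\partial x=z^{3}+\partial f/\partial x=z^{3}$ along $(x=y=w=0)$, a unit there, so $X$ is nonsingular along $X\setminus X^{\circ}$.

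I expect the only genuine work to be the chart bookkeeping and the verification of exactly which monomials of $f$ survive each restriction; there is no conceptual obstacle, and the generality of $f$ is used only to ensure $\gamma\neq0$ and that the quadratic $q$ is squarefree, everything else being dictated by $2\nu=3\mu=4\lambda$.
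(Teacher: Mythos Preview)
Your argument is correct and follows essentially the same route as the paper: both decompose $Z^\circ$ into $(x\ne0)$ (handled as a graph $\bar z=-f/x$) and $(x=0)\cap(y\ne0)$ (reduced to the squarefreeness of the quadratic $q(w)=f(u,0,1,w)$ for general $f$), and both show $X$ is smooth along $(x=y=w=0)$ by observing that $\partial F/\partial x$ restricts to $z^3$ there. You are somewhat more explicit than the paper about why the $u_k$-partials vanish on $(x=0)$ and about avoiding $\Sing(R)$ and $\Sing(P)$, but the substance is identical.
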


\begin{proof}
It is clear that the singular locus of $Z^{\circ}$ is contained in $(x = 0) \cap Z = (x = f = 0)$.
We can write the defining polynomial of $Z$ as
\[
\bar{F} = \bar{z} x + \alpha w^2 + w (\beta y^2 + g) + \gamma y^4 + h,
\]
where $\alpha, \beta,\gamma \in \K$ and $h = h (u,x,y)$, $g = g (u,x,y)$ are polynomials of bi-degree $(\nu,2)$, $(2 \nu,4)$, respectively, and both of them are divisible by $x$.
Since $f$ is general, $\alpha,\beta,\gamma$ are general.
In particular $\alpha \ne 0$ and we have $Z^{\circ} = Z \setminus \big( (x = y = 0) \cap Z \big)$, so that $Z^{\circ}$ is covered by two open subsets $U_x = (x \ne 0)$ and $U_y = (y \ne 0)$ of $R$.
By the above argument, $Z \cap U_x$ is nonsingular and it remains to  show that $Z \cap U_y$ is nonsingular along $(x = 0) \cap U_y$.
The open set $U_y \subset R$ is isomorphic to $\mbP^{n-2} \times \mbA^3_{x,\bar{z},w}$ and $Z \cap U_y$ is defined by
\[
\bar{F} (u,x,1,\bar{z},w) = \bar{z} x + \alpha w^2 + w (\beta + g (u,x,1)) + \gamma + h (u,x,1) = 0.
\]
We have $(\prt \bar{F}/\prt w)|_{(x = 0)} = 2 \alpha w + \beta$ since $g, h$ are divisible by $x$. 
It is then straightforward to check that the singular locus of $Z \cap U_y$ along $(x = 0) \subset U_y$ is contained in the set
\[
(x = \bar{F} = 2 \alpha w + \beta = 0) \subset U_y,
\] 
which is empty since $\alpha, \beta,\gamma$ are general.
Thus $Z^{\circ}$ is nonsingular.

We have $X \setminus X^{\circ} = (x = y = w = 0) \cap X$ which is contained in the open subset $V = (z \ne 0) \subset P$.
The open subset $V$ is isomorphic to $\mbP^{n-2} \times \mbA^3_{x,y,w}$ and $X \cap V$ is the hypersurface defined by $x + f (u,x,y,w) = 0$ in $V$.
It is then straightforward to check that $X \cap V$ is nonsingular along $(x = y = w = 0) \subset V$.
Thus $X$ is nonsingular along $X \setminus X^{\circ}$ as desired.
\end{proof}

Since $Z^{\circ}$ is nonsingular, we can define $\mcM^{\circ}$ to be the invertible sheaf on $X^{\circ}$ associated to the triple cover $\tau^{\circ}$, and we set $\mcM = \iota_* \mcM^{\circ}$, where $\iota \colon X^{\circ} \inj X$.

\begin{Lem} \label{lem:critdP2case3}
The section $\bar{z}$ has only nondegenerate critical points on $Z^{\circ}$.
\end{Lem}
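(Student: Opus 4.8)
The plan is to cover $Z^{\circ}$ by the two affine pieces $U_x = (x \ne 0)$ and $U_y = (y \ne 0)$ of $R$ — which suffices because, as observed in the proof of Lemma \ref{lem:nonsingZdP2-3}, $Z^{\circ} = Z \setminus \big( (x = y = 0) \cap Z \big)$ — and to analyse the critical points of $\bar z$ on each piece separately.

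On $U_x$ I would run a Bertini‑style dimension count. On each standard chart $U_{i,x} = (u_i \ne 0) \cap (x \ne 0)$ of $R$ the defining equation $\bar z x + f = 0$ of $Z$ realises $Z \cap U_{i,x}$ as the graph $\bar z = -f|_{u_i = x = 1}$, hence as an affine $n$-space with coordinates the dehomogenisations of $u_k$ $(k \ne i)$, $y$ and $w$; and with respect to the generator $u_i^{3\mu} x^3$ of $\mcL^3$ the section $\bar z$ is represented there by the polynomial $-f|_{u_i = x = 1}$. The key point is that, since $2\nu = 3\mu = 4\lambda$ with $\lambda,\mu,\nu$ positive, the polynomial $f$ expressed in these coordinates is general enough that its $2$-jet at an arbitrary point $\msp$ can be prescribed freely as $f$ varies; the one direction requiring care is $w$, in which $f$ has degree at most $2$, but the coefficient of $w^2$ is the general nonzero constant $\alpha$, and in characteristic $3$ the parts of $f$ of $y$-degree $3$ and $4$ contribute nothing to the Hessian, so the $y$-$y$ Hessian entry is controlled by the $y^2$-part of $f$, which is present because the relevant $u$-degree $2\nu - 2\lambda$ is positive. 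Granting this, writing $\Lambda$ for the space of defining polynomials, the locus of pairs $(f,\msp)$ with $\msp \in Z^{\circ}_f \cap U_x$ at which $\bar z$ has a critical point that fails to be nondegenerate has dimension at most $\dim \Lambda - 1$: the condition $\msp \in Z_f$ costs one parameter, the vanishing of the linear part of the local equation of $\bar z$ costs $n$ more, and singularity of the Hessian costs one more (its value being a general symmetric $n \times n$ matrix, subject only to the harmless restriction that the $(w,w)$-entry is nonzero), while $\msp$ moves in the $(n+1)$-dimensional $R$. Hence this locus does not dominate $\Lambda$, and a general $f$ has only nondegenerate critical points for $\bar z$ on $Z \cap U_x$.

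It then remains to treat $Z^{\circ} \cap U_y$, and since its part with $x \ne 0$ already lies in $U_x$, I only need to examine the locus $(x = 0) \cap Z \cap U_y$. Writing $\bar F = \bar z x + \alpha w^2 + w(\beta y^2 + g) + \gamma y^4 + h$ with $x \mid g, h$ and $\alpha,\beta,\gamma$ general as in the proof of Lemma \ref{lem:nonsingZdP2-3}, one has $(\partial \bar F/\partial w)|_{(x = 0)} = 2\alpha w + \beta$; on $(x = 0) \cap Z \cap U_y$ the coordinate $w$ is a root of $\alpha w^2 + \beta w + \gamma$, which for general $\alpha,\beta,\gamma$ has nonzero discriminant (also in characteristic $3$), so this root is simple and $\partial \bar F/\partial w$ is nowhere zero there. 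Therefore near every such point the functions $u_k$ $(k \ne i)$, $x$, $\bar z$ form a local coordinate system on $Z$, and with respect to the generator $u_i^{3\mu - 3\lambda} y^3$ of $\mcL^3$ the section $\bar z$ is represented by the coordinate $\bar z$ itself, whose differential is nowhere zero; so $\bar z$ has no critical point in a neighbourhood of $(x = 0) \cap Z \cap U_y$. Together with the previous step this proves the lemma.

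The main obstacle is the dimension count on $U_x$: one has to verify that, in spite of the weighted grading (the weight-$2$ variable $w$ appearing only quadratically) and of the vanishing of various binomial coefficients in characteristic $3$, enough of the $2$-jet of $\bar z$ stays free for the Hessian to realise a general symmetric matrix — and it is precisely here that the numerical identity $2\nu = 3\mu = 4\lambda$, with $\lambda,\mu,\nu$ positive, is used.
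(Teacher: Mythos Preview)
Your proof is correct and follows the same overall strategy as the paper --- split $Z^{\circ}$ into $U_x$ and the locus $(x=0)$, run a Bertini-style dimension count on $U_x$, and show there are no critical points on $(x=0)$ --- but the execution differs in two places worth noting.

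On $(x=0)$ the paper's argument is a one-liner: since $\bar z x$ is the only term of $\bar F = \bar z x + f$ involving $\bar z$, one has $\partial\bar F/\partial\bar z = x$, which vanishes along $(x=0)$; hence at every smooth point of $Z\cap(x=0)$ the variable $\bar z$ is never the one eliminated by the implicit function theorem, so $\bar z$ (or a translate of it) is automatically part of a local coordinate system on $Z$ there and $d\bar z\ne 0$. This bypasses your discriminant computation for $\alpha w^2+\beta w+\gamma$, though your route via $\partial\bar F/\partial w\ne 0$ is of course also valid.

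On $U_x$ the paper packages the dimension count by invoking Lemma~\ref{lem:surjrest}\,(1) on the $\mbP(1,1,2)$-bundle $R'$ over $\mbP^{n-2}$ with coordinates $x,y,w$ of weights $(0,1),(\lambda,1),(\nu,2)$: the required inequality is just $2\nu\ge\max\{2,2\lambda,2\nu\}$, and this yields full surjectivity of the $3$rd restriction map of $\mcO_{R'}(2\nu,4)$ at every point of $(x\ne 0)$. In particular the entire $2$-jet of $-f|_{u_i=x=1}$ is freely prescribable as $f$ varies, so your caveat about the $(w,w)$-Hessian entry is unnecessary --- that entry equals $2\alpha$ and takes every value as $\alpha$ varies. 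Your final remark that the identity $2\nu=3\mu=4\lambda$ is used ``precisely here'' is also slightly overstated: only $\nu\ge 1$ and $\nu\ge\lambda$ are needed for the surjectivity check.
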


\begin{proof}
Since $\bar{z} x$ is the unique term of the defining polynomial $\bar{z} x + f$ of $Z$ involving $\bar{z}$, we can choose $\bar{z}$ (or more precisely its translation) as a part of local coordinates of $Z$ at any point in $(x = 0) \cap Z$.
Hence the section $\bar{z}$ does not have a critical point along $(x = 0) \cap Z$.

It remains to consider critical points of $\bar{z}$ on the open set $U_x$.
On $U_x$, by setting $x = 1$, we have $\bar{z} = - f$.
Eliminating $\bar{z}$, it is enough to show that $f$ has only nondegenerate critical points on the open subset $(x \ne 0)$ of the $\mbP (1,1,2)$-bundle $R'$ over $\mbP^{n-2}$ defined by
\[
\begin{pmatrix}
u_0 & \cdots & u_{n-2} & & x & y & w \\
1 & \cdots & 1 & | & 0 & \lambda & \nu \\
0 & \cdots & 0 & | & 1 & 1 & 2
\end{pmatrix}.
\]
The section $f$ is a general element of $H^0 (R', \mcN)$, where $\mcN = \mcO_{R'} (2 \nu,4)$ and, by Lemma \ref{lem:surjrest}, the $3$rd restriction map
\[
r_3 (\msp) \colon H^0 (R', \mcN) \to \mcN \otimes (\mcO_{R'}/\mfm_{\msp}^3)
\]
is surjective at any point $\msp \in (x \ne 0) \subset R'$ since $2 \nu \ge \max \{2,2 \lambda,2\nu\}$.
This shows that $f$ has only nondegenerate critical points on $(x \ne 0) \subset R'$ and the proof is completed.
\end{proof}

\begin{Prop} \label{prop:exitCHresoldP2-3}
Suppose that $(n,\lambda,\mu,\nu)$, where $2 \nu = 3 \mu = 4 \lambda$, satisfies the assumption of \emph{Lemma \ref{lem:dP2num}} and let $X$ be as in \emph{Setting \ref{setting:dP2-3}}.
Then there exists a universally $\CH_0$-trivial resolution $\varphi \colon Y \to X$ of singularities of $X$ such that $\varphi^*\mcM \inj \Omega_Y^{n-1}$, and we have
\[
\mcM \cong \mcO_X (\nu - \lambda - (n-1),0).
\]
\end{Prop}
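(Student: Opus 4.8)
The plan is to deduce the statement from Proposition \ref{prop:existresol}, exactly as in the proofs of Propositions \ref{prop:goodresoldP1} and \ref{prop:exitCHresoldP2}. By Lemma \ref{lem:nonsingZdP2-3} the variety $Z^{\circ}$ is nonsingular, and by Lemma \ref{lem:critdP2case3} the section $\bar{z} \in H^0 (Z^{\circ}, (\mcL^{\circ})^3)$ has only nondegenerate critical points on $Z^{\circ}$; since $\chara (\K) = 3 > 2$ there is no ``almost'' case to worry about. Hence Proposition \ref{prop:existresol}, applied to the triple cover $\tau^{\circ} \colon X^{\circ} \to Z^{\circ}$, yields a universally $\CH_0$-trivial resolution $\varphi^{\circ} \colon Y^{\circ} \to X^{\circ}$ with $(\varphi^{\circ})^*\mcM^{\circ} \inj \Omega_{Y^{\circ}}^{n-1}$. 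By Lemma \ref{lem:nonsingZdP2-3} again, $X$ is nonsingular along $X \setminus X^{\circ}$, so $\Sing X \subset X^{\circ}$ and $\varphi^{\circ}$ is an isomorphism over $X^{\circ} \setminus \Sing X$. I would then glue $\varphi^{\circ}$ with the identity on $X \setminus \Sing X$ to obtain a resolution $\varphi \colon Y \to X$. Its scheme-theoretic fibers are single points over $X \setminus \Sing X$ and coincide with the universally $\CH_0$-trivial fibers of $\varphi^{\circ}$ over $\Sing X$, so $\varphi$ is universally $\CH_0$-trivial by \cite[Proposition 1.8]{CTP} (cf.\ Remark \ref{rem:CHtriv}).

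Next I would check that the inclusion of sheaves extends to all of $Y$. The complement $X \setminus X^{\circ} = (x = y = w = 0) \cap X$ equals $(x = y = w = 0) \cap P$, which is isomorphic to $\mbP^{n-2}$, hence has codimension $2$ in $X$; as $X$ is a hypersurface it is Cohen--Macaulay, so $\mcM = \iota_*\mcM^{\circ}$ is reflexive, and being invertible on the smooth locus $X \setminus \Sing X$ (which contains $X \setminus X^{\circ}$) and equal to $\mcM^{\circ}$ on $X^{\circ}$, it is an invertible sheaf on $X$. Since $\varphi$ is an isomorphism over $X \setminus X^{\circ}$ and $(\varphi^{\circ})^*\mcM^{\circ} \inj \Omega_{Y^{\circ}}^{n-1}$, the inclusion $\varphi^*\mcM \inj \Omega_Y^{n-1}$ holds on $Y^{\circ}$ and extends across the remaining codimension-$2$ locus just as in Propositions \ref{prop:goodresoldP1} and \ref{prop:exitCHresoldP2}.

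It remains to compute $\mcM$. For general $f$ the branch divisor $(\bar{z} = 0) \cap Z^{\circ}$ is reduced, so \cite[Lemma V.5.9]{Kollar} gives $\mcM^{\circ} \cong (\tau^{\circ})^*(\omega_{Z^{\circ}} \otimes (\mcL^{\circ})^3)$, whence $\mcM \cong \tau^*(\omega_Z \otimes \mcL^3)$ by the codimension argument above. The canonical sheaf of the $\mbP (1,1,3,2)$-bundle $R$ is $\omega_R \cong \mcO_R (-(n-1) - \lambda - 3\mu - \nu, -7)$, the negative of the sum of the bidegrees of the Cox variables $u_0, \dots, u_{n-2}, x, y, \bar{z}, w$. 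Since $2\nu = 3\mu$, the hypersurface $Z$ lies in $|\mcO_R (3\mu, 4)|$, so adjunction gives $\omega_Z \cong \mcO_Z (-(n-1) - \lambda - \nu, -3)$; tensoring with $\mcL^3 = \mcO_Z (3\mu, 3)$ and using $3\mu = 2\nu$ yields $\omega_Z \otimes \mcL^3 \cong \mcO_Z (\nu - \lambda - (n-1), 0)$. Finally $\tau^*\mcO_Z (\alpha, \beta) \cong \mcO_X (\alpha, \beta)$, since the triple cover $P \to R$ (given on Cox rings by $\bar{z} \mapsto z^3$) carries the generators $F, D$ of $\operatorname{Cl}(R)$ to the corresponding classes on $P$. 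Therefore $\mcM \cong \mcO_X (\nu - \lambda - (n-1), 0)$, as claimed.

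Since the real work is already contained in Lemmas \ref{lem:nonsingZdP2-3} and \ref{lem:critdP2case3}, I expect no essential difficulty. The most delicate step is confirming that the global inclusion $\varphi^*\mcM \inj \Omega_Y^{n-1}$ descends to $X$, i.e.\ that $\mcM$ is invertible and the inclusion persists across $X \setminus X^{\circ}$; this rests on the codimension-two bound for $X\setminus X^{\circ}$ together with the Cohen--Macaulay property of $X$. The remaining points requiring a little care are the reducedness of the branch divisor for general $f$ (so that Koll\'ar's formula for $\mcQ$ applies on $Z^{\circ}$) and the bookkeeping with the two gradings under $P \to R$ in the computations of $\omega_R$, the adjunction on $Z$, and $\tau^*\mcO_Z(\alpha,\beta)$.
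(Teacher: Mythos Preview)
Your proposal is correct and follows exactly the same approach as the paper: invoke Lemmas \ref{lem:nonsingZdP2-3} and \ref{lem:critdP2case3} to apply Proposition \ref{prop:existresol}, then compute $\mcM \cong \tau^*(\omega_Z \otimes \mcL^3)$ via adjunction on $Z \subset R$. The paper's proof is a terse two-line version of what you wrote; your additional discussion of the gluing over $X \setminus X^{\circ}$, the codimension-two extension of the sheaf inclusion, and the reducedness of the branch divisor simply makes explicit what the paper (here and in the analogous Propositions \ref{prop:goodresoldP1} and \ref{prop:exitCHresoldP2}) leaves to the reader.
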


\begin{proof}
The existence of $\varphi \colon Y \to X$ follows from Lemmas \ref{lem:nonsingZdP2-3}, \ref{lem:critdP2case3} and Proposition \ref{prop:existresol}.
We have $\omega_Z \cong \mcO_Z (-(n-1) - \lambda - 3 \mu + \nu, -3)$ and $\mcL \cong \mcO_Z (\mu,1)$, so that
\[
\mcM \cong \tau^* (\omega_Z \otimes \mcL^3) \cong \mcO_Z (- (n-1) - \lambda + \nu,0),
\]
and the proof is completed.
\end{proof}

\subsection{Proofs of Theorem \ref{thm:dP2} and Corollary \ref{cor:dP2}}

\begin{proof}[Proof of \emph{Theorem \ref{thm:dP2}}]
Note that $\nu - \lambda - (n-1) \ge 2 \nu - \lambda - \mu - (n-1)$ in the case when $2 \nu = 3 \mu = 4 \lambda$.
Thus, by the same reasoning as in the proof of Theorem \ref{thm:dP1}, this follows from Propositions \ref{prop:exitCHresoldP2}, \ref{prop:exitCHresoldP2-3} and Theorem \ref{thm:spCH0}.
\end{proof}

\begin{proof}[Proof of \emph{Corollary \ref{cor:dP2}}]
This follows from Lemma \ref{lem:dP2ample} and Theorem \ref{thm:dP2}
\end{proof}

\section{Del Pezzo fibrations of degree $3$}

For integers $n \ge 3, \lambda,\mu,\nu$, we denote by $P_3 (n,\lambda,\mu,\nu)$ the $\mbP^3$-bundle over $\mbP^{n-2}$ defined by
\[
\begin{pmatrix}
u_0 & \cdots & u_{n-2} & & x & y & z & w \\
1 & \cdots & 1 &  | & 0 & \lambda & \mu & \nu \\
0 & \cdots & 0 &  | & 1 & 1 & 1 & 1
\end{pmatrix},
\]
and we consider the complete linear system $|\mcO_P (\theta,3)|$ for an integer $\theta$, where $P = P_3 (n,\lambda,\mu,\nu)$.
For a member $X \in |\mcO_P (\theta,3)|$, we denote by $\pi \colon X \to \mbP^{n-2}$ the restriction of the projection $P \to \mbP^{n-2}$, whose fibers are cubic hypersurfaces in $\mbP^3$.
The aim of the present section is to prove the following.

\begin{Thm} \label{thm:dP3}
Suppose that the ground field is $\mbC$.
Let $X$ be a very general member of $|\mcO_P (\theta,3)|$ and suppose that $\pi \colon X \to \mbP^{n-2}$ is a nonsingular del Pezzo fibration.
If $\theta \ge \lambda + \mu + (n-1)$, then $X$ is not stably rational.
\end{Thm}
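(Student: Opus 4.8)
The plan is to adapt the proofs of Theorems~\ref{thm:dP1} and~\ref{thm:dP2}. After normalising $0\le\lambda\le\mu\le\nu$, one reduces $X$ to a model over an algebraically closed field $\K$ of characteristic $p\in\{2,3\}$ on which $\tau\colon X\to Z$ is a (possibly partial) degree-$p$ cyclic cover, where $Z$ is a $\mbP^2$-bundle over $\mbP^{n-2}$ with fibre coordinates $x,y,z$ of weights $0,\lambda,\mu$, or — when the leading coefficient of $w$ is not a constant — a hypersurface in an auxiliary weighted bundle exactly as in Setting~\ref{setting:dP2-3}. One then establishes statements (1), (2) and (3) of Section~\ref{sec:outline} for $\tau$, with $\mcM$ the invertible sheaf associated with $\tau$, and concludes by Lemma~\ref{lem:totaro} and the specialisation Theorem~\ref{thm:spCH0}, lifting a very general complex $X$ to characteristic $0$ through a discrete valuation ring with residue field $\K$ as in the proof of Theorem~\ref{thm:dP1}. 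First one needs a numerical lemma, in the spirit of Lemmas~\ref{lem:dP1num} and~\ref{lem:dP2num}, listing the admissible tuples $(\lambda,\mu,\nu,\theta)$ — smoothness of a general $X\in|\mcO_P(\theta,3)|$ together with $\rho(X)=2$ forces $\nu\ge1$ and $\theta\ge3\nu$ (with boundary relations among $\lambda,\mu,\nu$ when a weight inequality is tight) — together with a companion ampleness lemma (cf. Lemmas~\ref{lem:dP1ample} and~\ref{lem:dP2ample}): using $-K_X\sim(n-1+\lambda+\mu+\nu-\theta)F+D$, one gets $\theta\ge\lambda+\mu+(n-1)$ whenever $n=3$ or $-K_X$ is not ample, which gives the analogue of Corollaries~\ref{cor:dP1} and~\ref{cor:dP2}.

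For the construction, over $\K$ with $\chara\K=3$ I would take the special member $X=(w^3+f(u,x,y,z)=0)$ when $\theta=3\nu$, so that $\tau$ is the triple cover of the $\mbP^2$-bundle $Z$ branched along $f\in H^0(Z,\mcL^3)$ with $\mcL=\mcO_Z(\nu,1)$, and the variant $X=(w^3a(u)+f=0)$, $\deg a=\theta-3\nu>0$, with $Z=(\bar w\,a(u)+f=0)$ inside an auxiliary $\mbP(1,1,1,3)$-bundle when $\theta>3\nu$; in the most degenerate subcases it may be necessary to switch to a characteristic-$2$ model $X=(w^2 x\,a(u)+c(u,x,y,z)=0)$, realised as the inseparable double cover $\bar w=w^2$ of the hypersurface $Z=(\bar w x\,a(u)+c=0)$ in a $\mbP(1,1,1,2)$-bundle and branched along $\bar w\in H^0(Z,\mcL^2)$, $\mcL=\mcO_Z(\nu,1)$. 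Statement (1) is a direct smoothness check (cf. Lemmas~\ref{lem:crismhyp} and~\ref{lem:nonsingZdP2-3}). For (2), the critical-point analysis of the branch section runs as in Lemmas~\ref{lem:dP3crit1} and~\ref{lem:critdP2case3}: surjectivity of the $3$rd restriction map on $U_x=(x\ne0)$ (Lemma~\ref{lem:surjrest}) gives only nondegenerate critical points there; surjectivity of the $2$nd restriction map rules out critical points on $\Pi_y=(x=0,\,y\ne0)$ and $\Gamma_z=(x=y=0)$ outside the extremal cases — essentially $\mu=\nu$ — where one normalises the restriction of the branch section to the standard affine charts so that it falls under Condition~\ref{cond:resolposcrit1} or~\ref{cond:resolposcrit2}, and then applies Lemmas~\ref{lem:resol1}, \ref{lem:resol2} and Remark~\ref{rem:CHtriv}; this yields a universally $\CH_0$-trivial $\varphi\colon Y\to X$ with $\varphi^*\mcM\inj\Omega_Y^{n-1}$. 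For (3), in each of these models adjunction gives $\mcM\cong\tau^*(\omega_Z\otimes\mcL^p)\cong\mcO_X(\theta-\lambda-\mu-(n-1),0)$ (the second slot being $0$ after the reflexive extension in the partial-cover cases), so the hypothesis $\theta\ge\lambda+\mu+(n-1)$ is exactly what forces $H^0(X,\mcM)\ne0$, hence $H^0(Y,\Omega_Y^{n-1})\ne0$ and, by Lemma~\ref{lem:totaro}, $Y$ not universally $\CH_0$-trivial.

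The main obstacle is (2) in the extremal subcases, precisely as flagged in Section~\ref{sec:outline}: for the tight weight relations the branch section of $\tau$ acquires critical points along a positive-dimensional subvariety — typically $\Gamma_z$, or a union of fibres of $Z\to\mbP^{n-2}$ over the critical locus of a coefficient polynomial — which are not (almost) nondegenerate, so Proposition~\ref{prop:existresol} does not apply directly and one must match the chart-wise shape of the branch section to Condition~\ref{cond:resolposcrit1} or~\ref{cond:resolposcrit2}. Enumerating these subcases and carrying out that normalisation in each — and, where the characteristic-$2$ model is needed, setting up the partial cover so that the complement $X\setminus X^{\circ}$ has codimension at least two, and then resolving it as in Lemmas~\ref{lem:resol1} and~\ref{lem:resol2} — is where essentially all the work lies; the numerical lemma, the computation of $\mcM$, and the specialisation step are routine given the machinery assembled in the earlier sections.
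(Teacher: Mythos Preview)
Your overall strategy is right --- split into cases, build a cyclic-cover model in positive characteristic, verify (1)--(3) of Section~\ref{sec:outline}, and specialise --- and your identification $\mcM\cong\mcO_X(\theta-\lambda-\mu-(n-1),0)$ is correct. But your numerical lemma is wrong, and this error propagates into a genuine structural gap in the case analysis.

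You assert that smoothness together with $\rho(X)=2$ forces $\nu\ge 1$ and $\theta\ge 3\nu$. Neither holds. The correct constraints (Lemma~\ref{lem:dP3num}) are $\theta\ge 2\nu$ and $\theta\ge 3\mu$; the tuple $(\theta,\lambda,\mu,\nu)=(2,0,0,0)$ already has $\nu=0$, and any tuple with $2\nu\le\theta<3\nu$ (for instance $(2,0,0,1)$) violates $\theta\ge 3\nu$ yet gives a nonsingular del Pezzo fibration. Throughout the range $2\nu\le\theta<3\nu$ the linear system $|\mcO_P(\theta,3)|$ contains no monomial divisible by $w^3$, so your characteristic-$3$ models $w^3+f$ and $a(u)w^3+f$ are simply unavailable there.

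The paper treats this range in characteristic $2$, but not as a ``degenerate fallback'': it is one of three main cases ($\theta>3\nu$, $\theta=3\nu$, $\theta<3\nu$), and the construction is different from what you propose. The special member is $w^2(a x+b y+c z)+f$ with $a,b,c\in\K[u]$ of degrees $\theta-2\nu$, $\theta-2\nu-\lambda$, $\theta-2\nu-\mu$ (Setting~\ref{setting:dP3-3}), not $w^2 x\,a(u)+c$. The full linear form $ax+by+cz$ is essential: one needs the locus $(a=b=c=0)\subset\mbP^{n-2}$ to be empty in order to get smoothness of $Z^{\circ}$ and of $X$ along $X\setminus X^{\circ}$, and this is precisely what parts (3)--(5) of Lemma~\ref{lem:dP3num} are for. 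The critical-point analysis then proceeds via the section $\bar w$ on the hypersurface $Z\subset R'$, which on $(g\ne 0)$ reduces to studying $g\cdot f$ on the $\mbP^2$-bundle; since $p=2$ one needs surjectivity of the $4$th (not $3$rd) restriction map on $U_x$ to control almost nondegenerate critical points. So the characteristic-$2$ case carries its own construction and its own analysis, rather than being a boundary phenomenon of the characteristic-$3$ cases.
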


\begin{Cor} \label{cor:dP3}
Let $X$ be as in \emph{Theorem \ref{thm:dP3}} $($without assuming $\theta \ge \lambda + \mu + (n-1)$$)$.
If either $-K_X$ is not ample or $n = 3$ and $(\theta,\lambda,\mu,\nu) \ne (1,0,0,0), (3,1,1,1)$, then $X$ is not stably rational.
\end{Cor}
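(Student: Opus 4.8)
The plan is to derive Corollary~\ref{cor:dP3} from Theorem~\ref{thm:dP3} exactly as Corollaries~\ref{cor:dP1} and \ref{cor:dP2} are derived from Theorems~\ref{thm:dP1} and \ref{thm:dP2}: it suffices to prove the degree~$3$ analogue of Lemmas~\ref{lem:dP1ample} and \ref{lem:dP2ample}, namely that \emph{(i)} if $-K_X$ is not ample then $\theta \ge \lambda+\mu+(n-1)$, and \emph{(ii)} if $n=3$ and $(\theta,\lambda,\mu,\nu) \notin \{(1,0,0,0),(3,1,1,1)\}$ then $\theta \ge \lambda+\mu+2$. Granting these, in either hypothesis of the corollary one has $\theta \ge \lambda+\mu+(n-1)$, so Theorem~\ref{thm:dP3} applies and $X$ is not stably rational. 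Throughout I normalize the $\mbG_m^2$-action so that $0 \le \lambda \le \mu \le \nu$.

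For \emph{(i)} I would use adjunction on $P = P_3(n,\lambda,\mu,\nu)$: writing $F$, $D$ for the restrictions to $X$ of $\mcO_P(1,0)$, $\mcO_P(0,1)$, one has $-K_X \sim a F + D$ with $a = (n-1)+\lambda+\mu+\nu-\theta$. Since $0 \le \lambda \le \mu \le \nu$, the linear system $|\mcO_P(\nu,1)|$ contains the sections $u_i^{\nu}x$, $u_i^{\nu-\lambda}y$, $u_i^{\nu-\mu}z$ and $w$, which have no common zero on $P$; hence $\mcO_P(\nu,1)$ is base point free, so $\nu F + D$ is globally generated on $X$, in particular nef. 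As $F$ is also nef, being the pullback of a hyperplane class from $\mbP^{n-2}$, and $\rho(X)=2$, every class $aF+D$ with $a>\nu$ lies in the interior of the two-dimensional cone spanned by $F$ and $\nu F+D$, hence in the interior of the nef cone of $X$, i.e.\ is ample. Thus $\theta < \lambda+\mu+(n-1)$ forces $a>\nu$ and $-K_X$ ample, which is the contrapositive of \emph{(i)}.

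For \emph{(ii)} I would set $n=3$, so that the claimed bound reads $\theta \ge \lambda+\mu+2$. The first step is to establish the degree~$3$ analogue of Lemma~\ref{lem:dP2num}: an elementary but careful analysis of the defining cubic $F$ of bidegree $(\theta,3)$ shows that smoothness of a general $X$ together with $\rho(X)=2$ force a list of numerical constraints on $(\theta,\lambda,\mu,\nu)$ --- for instance $\theta \ge 2\nu$, since otherwise $F = wG+H$ with $G \in (x,y,z)^2$ and $H \in (x,y,z)^3$, so that all partial derivatives of $F$ vanish along $(x=y=z=0) \cong \mbP^{n-2}$ and $X$ is singular there --- together with finitely many degenerate sub-cases (such as forced divisibility of $F$ by $x$) handled as in Lemma~\ref{lem:dP2num}. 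Assuming $\theta \le \lambda+\mu+1$ and combining with these constraints and $0\le\lambda\le\mu\le\nu$ leaves only finitely many numerical types; one then checks case by case that each is either excluded (a general member being singular, reducible, or not a Mori fibration with $\rho=2$), or has $-K_X$ non-ample --- hence is already covered by \emph{(i)} --- or is one of the two exceptional families: $(\theta,\lambda,\mu,\nu)=(1,0,0,0)$, the hypersurface of bidegree $(1,3)$ in $\mbP^1\times\mbP^3$, and $(3,1,1,1)$, the blow-up of a cubic threefold along a plane cubic curve. The main obstacle is precisely this last step: proving the degree~$3$ numerical lemma and running the enumeration carefully enough to be sure that no third exceptional family slips through --- in particular that, whenever a small type with $n=3$ and $\theta \le \lambda+\mu+1$ does give a nonsingular del Pezzo fibration with $\rho(X)=2$, either its anticanonical class fails to be ample or it is one of the two listed. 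By comparison, \emph{(i)} and the reduction of the corollary to \emph{(i)} and \emph{(ii)} are routine.
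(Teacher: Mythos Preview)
Your approach is exactly the paper's: the corollary is deduced from Theorem~\ref{thm:dP3} via Lemma~\ref{lem:dP3ample}, whose two parts are precisely your \emph{(i)} and \emph{(ii)}, and your proof of \emph{(i)} matches the paper's verbatim. The enumeration you flag as the main obstacle in \emph{(ii)} is in fact very short: from Lemma~\ref{lem:dP3num} one only needs $\theta \ge 3\mu$ and $\theta \ge 2\nu$, so $\theta \le \lambda+\mu+1 \le 2\mu+1$ forces $\mu \in \{0,1\}$, and then $\mu=0$ gives $\lambda=0$, $\theta=1$, $\nu=0$, while $\mu=1$ gives $\lambda=1$, $\theta=3$, $\nu=1$ --- no further case analysis is needed.
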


\begin{Lem} \label{lem:dP3num}
Suppose that the ground filed is $\mbC$.
Let $X$ be a general member of $|\mcO_P (\theta,3)|$ and suppose that $\pi \colon X \to \mbP^{n-2}$ is a nonsingular del Pezzo fibration.
Then the following hold.
\begin{enumerate}
\item $\theta \ge 2 \nu$.
\item $\theta \ge 3 \mu$.
\item If $2 \nu + \mu < \theta < 3 \nu$, then $n \le 4$.
\item If $2 \nu + \lambda < \theta < 2 \nu + \mu$, then $n = 3$.
\item If $\theta < 2 \nu + \lambda$, then $\theta = 2 \nu$.
\item If $\theta \le 2$, then the quadruple $(\theta,\lambda,\mu,\nu)$ is one of the following:
\[
(2,0,0,0), (2,0,0,1), (1,0,0,0).
\]
\end{enumerate}
\end{Lem}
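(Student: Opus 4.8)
The plan is to read off the admissible quadruples $(\theta,\lambda,\mu,\nu)$ from the shape of a defining equation of $X$, using throughout that the general member is nonsingular and, being a del Pezzo fibration, is a Mori fibre space over $\mbP^{n-2}$, so $\rho(X)=2$. Normalizing the torus action I may assume $0\le\lambda\le\mu\le\nu$. Write $F$ for a defining polynomial of $X$, a general form of bi-degree $(\theta,3)$ in the Cox ring of $P$; a monomial of fibre-degree $3$ in $x,y,z,w$ appears in $F$ with nonzero coefficient precisely when its $\lambda$-weight is $\le\theta$. The two loci used below are the section $\Sigma:=(x=y=z=0)\cong\mbP^{n-2}\subset P$ and the sub-$\mbP^1$-bundle $\Delta:=(x=y=0)\subset P$, which is a $\mbP^1$-bundle over $\mbP^{n-2}$ of dimension $n-1$, hence a divisor on $X$ whenever $\Delta\subset X$.

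For (1), (3), (4) and (5) the hypotheses all give $\theta<3\nu$, so $F$ has no $w^3$-monomial and
\[
F=w^2(\alpha_xx+\alpha_yy+\alpha_zz)+wB+C,
\]
with $B,C\in\mbC[u,x,y,z]$ and $\alpha_x,\alpha_y,\alpha_z$ forms on $\mbP^{n-2}$ of degrees $\theta-2\nu$, $\theta-2\nu-\lambda$, $\theta-2\nu-\mu$ (a form of negative degree being $0$). One checks that $\Sigma\subset X$ and $\Sing X\cap\Sigma=\Sigma\cap(\alpha_x=\alpha_y=\alpha_z=0)$: indeed $\partial F/\partial w$ and all $\partial F/\partial u_i$ vanish along $\Sigma$, while $\partial F/\partial x$, $\partial F/\partial y$, $\partial F/\partial z$ restrict there to $\alpha_xw^2$, $\alpha_yw^2$, $\alpha_zw^2$. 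As $X$ is nonsingular this set is empty, and the four statements follow by counting which of $\alpha_x,\alpha_y,\alpha_z$ are nonzero and of positive degree, using that $r$ hypersurfaces of positive degree in $\mbP^{n-2}$ have nonempty common zero locus as soon as $n-2\ge r$: if $\theta<2\nu$ all three forms vanish, so $\Sigma\subset\Sing X$ — impossible, giving (1); if $2\nu+\mu<\theta<3\nu$ all three have positive degree, so $n-2\le 2$, giving (3); if $2\nu+\lambda<\theta<2\nu+\mu$ only $\alpha_x,\alpha_y$ survive and have positive degree, so $n-2\le 1$, giving (4); and if $\theta<2\nu+\lambda$ then $\alpha_y=\alpha_z=0$, while by (1) $\theta\ge 2\nu$ and $\alpha_x$ has degree $\theta-2\nu\ge 0$, so if $\theta>2\nu$ then $\Sigma\cap(\alpha_x=0)\ne\emptyset$ — impossible, forcing $\theta=2\nu$, which is (5).

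For (2), suppose $\theta<3\mu$. Since $\nu\ge\mu$, each of $z^3,z^2w,zw^2,w^3$ has $\lambda$-weight $\ge 3\mu>\theta$, so $F$ contains no monomial in $z,w$ alone; hence $\Delta\subset X$ is a smooth prime divisor on the nonsingular $X$, and $\Delta$ meets each cubic-surface fibre $S$ of $\pi$ in the line $\ell_S:=S\cap(x=y=0)$. A line on a smooth cubic surface is a $(-1)$-curve, so for a ruling $\ell$ of $\Delta\to\mbP^{n-2}$ one has $\Delta\cdot\ell=-1$ and, by adjunction on $\Delta$, $K_X\cdot\ell=-1$; thus $[\ell]$ spans a $K_X$-negative divisorial extremal ray. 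Contracting it gives $X\to\bar X$ with $\rho(\bar X)=\rho(X)-1$, and $\pi$ descends to a fibration $\bar X\to\mbP^{n-2}$ whose general fibre is the del Pezzo surface of degree $4$ obtained from $S$ by blowing down $\ell_S$; hence $\rho(\bar X)\ge\rho(\mbP^{n-2})+1=2$ and $\rho(X)\ge 3$, contradicting $\rho(X)=2$. Therefore $\theta\ge 3\mu$.

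For (6): if $\theta\le 2$, then (1) gives $2\nu\le\theta$, so $\nu\le 1$, and (2) gives $3\mu\le\theta$, so $\mu=0$ and hence $\lambda=0$; thus $\lambda=\mu=0$, $\nu\in\{0,1\}$ and $\theta\in\{0,1,2\}$ with $\theta\ge 2\nu$. If $\nu=1$ then $\theta=2$, giving $(2,0,0,1)$; if $\nu=0$ then $P=\mbP^{n-2}\times\mbP^3$, and when $\theta=0$ the general $X$ is $\mbP^{n-2}\times S$ for a smooth cubic surface $S$, so $\rho(X)>2$ and $\pi$ is not a del Pezzo fibration, leaving $\theta\in\{1,2\}$ and the quadruples $(1,0,0,0)$ and $(2,0,0,0)$ — exactly the three listed. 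The only step requiring care is (2): it can happen that the member $X$ containing $\Delta$ is nonsingular — for instance when $\lambda=0$ and $\theta=2\mu=2\nu$ — so nonsingularity alone yields no contradiction, and one must instead exploit that a del Pezzo fibration is a Mori fibre space, through the Picard-number computation above. The Jacobian analysis behind (1), (3)–(5) and the deduction of (6) from (1) and (2) are then elementary.
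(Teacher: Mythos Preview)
Your proof is correct and follows the paper's approach exactly: analyze the singular locus along $\Sigma=(x=y=z=0)$ for (1), (3)--(5), argue via Picard number for (2), and deduce (6) from (1) and (2). Your treatment of (2) is more explicit than the paper's bare assertion that containing $(x=y=0)$ forces $\rho(X)\ge 3$; the one loose point is that extremality of $\mathbb{R}_{\ge 0}[\ell]$ is not verified, but this is inessential --- it suffices to note that for disjoint lines $\ell,\ell'$ in a smooth fibre $S$ one has $\Delta\cdot\ell=-1\ne 0=\Delta\cdot\ell'$ while $F\cdot\ell=F\cdot\ell'=0$ and $D\cdot\ell=D\cdot\ell'=1$, so $F,D,\Delta$ are linearly independent in $N^1(X)$.
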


\begin{proof}
Clearly $\theta \ge 2 \nu$ because otherwise $X$ is singular along $(x = y = z = 0)$.
If $\theta < 3 \mu$, then $X$ contains the $\mbP^2$-bundle $(x = y = 0) \subset P$ over $\mbP^{n-2}$ and thus the Picard number of $X$ is at least $3$.
These prove (1) and (2).

We prove (3), (4) and (5).
Suppose that $\theta < 3 \nu$.
Then $X$ is defined by an equation of the form
\[
w^2 (a_{\theta - 2 \nu} x + b_{\theta - 2 \nu - \lambda} y + c_{\theta-2 \nu - \mu} z) + w f (u,x,y,z) + g (u,x,y,z) = 0,
\]
where $a, b, c$ are homogeneous polynomials in variables $u$ of indicated degrees and $f,g$ are of bi-degrees $(\theta - \nu,2)$, $(\theta,3)$, respectively.
The set $\Sigma = (x = y = z = a = b = c = 0)$ is contained in the singular locus of $X$.
Hence we must have $\Sigma = \emptyset$, that is, $n \le 4$ if $2 \nu + \mu < \theta$, and $n = 3$ if $2 \nu + \lambda < \theta < 2 \nu + \mu$.
This proves (3) and (4).
If $\theta < 2 \nu + \lambda$, then $b = c = 0$ and the set $\Sigma$ is empty if and only if $a$ is a constant, that is, $\theta = 2 \nu$.
This proves (5).

Finally we prove (6).
We claim that $\theta \ge 1$.
Clearly we have $\theta \ge 0$ because otherwise $|\mcO_P (\theta,3)| = \emptyset$.
If $\theta = 0$, then $\lambda = \mu = \nu = 0$ by (1) and thus $X$ is the product $\mbP^{n-2} \times S$, where $S$ is a cubic surface.
This is impossible and we have $\theta \ge 1$ and the claim is proved.
Now we assume that $\theta \le 2$.
By (2), we have $\lambda = \mu  = 0$.
If $\theta = 1$, then $\nu = 0$ by (1) and we have $(\theta,\lambda,\mu,\nu) = (1,0,0,0)$.
If $\theta = 2$, then $\nu \le 1$ by (1) and we have $(\theta,\lambda,\mu,\nu) = (2,0,0,0), (2,0,0,1)$.
This completes the proof.
\end{proof}

\begin{Lem} \label{lem:dP3ample}
Suppose that the ground filed is $\mbC$ and $\pi \colon X \to \mbP^{n-2}$ is a nonsingular del Pezzo fibration.
If either $-K_X$ is not ample or $n = 3$ and $(\theta,\lambda,\mu, \nu) \ne (1,0,0,0), (3,1,1,1)$, then $\theta \ge \lambda + \mu + (n-1)$.
\end{Lem}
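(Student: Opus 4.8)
The plan is to follow the pattern of the proofs of Lemmas~\ref{lem:dP1ample} and~\ref{lem:dP2ample}: read off when $-K_X$ is ample from the position of its class in the nef cone of $X$, and then dispose of the remaining three-dimensional quadruples by a short numerical check against Lemma~\ref{lem:dP3num}. Throughout I use that $\rho(X)=2$ (since $\pi\colon X\to\mbP^{n-2}$ is a del Pezzo fibration) and that we may assume $0\le\lambda\le\mu\le\nu$, after normalizing the torus action on $P$. Taking $F_P\in|\mcO_P(1,0)|$ and $D_P\in|\mcO_P(0,1)|$ and setting $F=F_P|_X$, $D=D_P|_X$, adjunction gives
\[
-K_X\sim (n-1+\lambda+\mu+\nu-\theta)\,F+D .
\]

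The first step treats the case where $-K_X$ is not ample. Since $0\le\lambda\le\mu\le\nu$, the nef cone of $P$ is generated by $F_P$ and $\nu F_P+D_P$, and in particular $|\mcO_P(\nu,1)|$ is base point free; hence $\nu F+D$ is nef on $X$. As $\rho(X)=2$ and $F=\pi^*\mcO_{\mbP^{n-2}}(1)$ lies on the boundary of the nef cone of $X$, the divisor $\alpha F+D$ is ample for every $\alpha>\nu$, exactly as in the earlier arguments. Thus if $-K_X$ is not ample we must have $n-1+\lambda+\mu+\nu-\theta\le\nu$, i.e.\ $\theta\ge\lambda+\mu+(n-1)$.

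The second step assumes $n=3$ and $(\theta,\lambda,\mu,\nu)\neq(1,0,0,0),(3,1,1,1)$; by the first step I may also assume $-K_X$ is ample, and I must prove $\theta\ge\lambda+\mu+2$. The only inputs are the bounds $\theta\ge 3\mu$ and $\theta\ge 2\nu$ and the list of quadruples with $\theta\le 2$, all from Lemma~\ref{lem:dP3num}, together with $\lambda\le\mu\le\nu$. I would split on $\mu$. If $\mu\ge 2$, then $2\mu\ge\lambda+\mu\ge\lambda+2$, so $\theta\ge 3\mu\ge\lambda+\mu+2$. If $\mu=1$, then $\lambda\in\{0,1\}$ and $\theta\ge 3\mu=3$; when $\lambda=0$ this already gives $\theta\ge 3=\lambda+\mu+2$, and when $\lambda=1$ the remaining possibility $\theta=3$ would force $\nu=1$ by $2\nu\le\theta$, i.e.\ the excluded quadruple $(3,1,1,1)$, so $\theta\ge 4=\lambda+\mu+2$. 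If $\mu=0$, then $\lambda=0$, and were $\theta\le 1$ Lemma~\ref{lem:dP3num}(6) would force $(\theta,\lambda,\mu,\nu)=(1,0,0,0)$, which is excluded; hence $\theta\ge 2=\lambda+\mu+2$.

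Everything here is bookkeeping over Lemma~\ref{lem:dP3num}, so there is no real obstacle, but the point needing a little care---just as in Lemmas~\ref{lem:dP1ample} and~\ref{lem:dP2ample}---is the claim that having $\nu F+D$ nef on $X$ (obtained by restricting the corresponding nef generator of $P$) already forces $\alpha F+D$ to be ample for all $\alpha>\nu$; this relies only on $\rho(X)=2$ and on $F$ lying on the boundary of the nef cone, and needs no finer description of that cone. The other thing to verify carefully is that the finitely many small quadruples matched by the numerical bounds of Lemma~\ref{lem:dP3num} are exactly the two listed exceptions $(1,0,0,0)$ and $(3,1,1,1)$.
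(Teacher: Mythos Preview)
Your proof is correct and follows essentially the same approach as the paper: you use the nef generator $\nu F+D$ together with $\rho(X)=2$ to conclude that $\alpha F+D$ is ample for $\alpha>\nu$, then read off the inequality from the adjunction formula, and finish the $n=3$ case by a short numerical check against Lemma~\ref{lem:dP3num}. The only cosmetic difference is that the paper argues the $n=3$ case by contrapositive (assuming $\theta\le\lambda+\mu+1$ and deriving the two excluded quadruples), whereas you do a direct case split on $\mu$; the content is the same.
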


\begin{proof}
Take divisors $F_P \in |\mcO_Q (1,0)|$, $D_P \in |\mcO_P (0,1)|$ and set $F = F_P|_X$, $D = D_P|_X$.
It is clear that $\mcO_P (\nu,1)$ is generated by global sections and not ample.
Thus the cone of ample divisors on $P$ is the interior of the cone spanned by $F_P$ and $\nu F_P + D_P \in |\mcO_P (\nu,1)|$.
Hence a divisor $\alpha F + D$ on $X$ is ample if $\alpha > \nu$.
By adjunction, we have an isomorphism 
\[
\mcO_X (-K_X) \cong \mcO_X (n-1 + \lambda + \mu + \nu - \theta, 1), 
\]
that is, $-K_X \sim (n-1 + \lambda + \mu + \nu - \theta) F + D$.
Thus, if $-K_X$ is not ample, then the inequality $\theta \ge \lambda + \mu + (n-1)$ holds.

We consider the case when $n = 3$ and we assume that $\theta \le \lambda + \mu + 1$.
Since $3 \mu \le \theta$ and $\lambda \le \mu$, we have $3 \mu \le 2 \mu + 1$, that is, $\mu = 0,1$.
Suppose that $\mu = 0$.
In this case $\lambda = 0$ and $\theta \le 1$, which implies $\theta = 1$ since $\theta > 0$.
Moreover we have $\nu = 0$ since $2 \nu \le \theta$ and thus $(\theta,\lambda,\mu,\nu) = (1,0,0,0)$.
Suppose that $\mu = 1$.
By the inequalities $\lambda \le \mu \le \nu$, $\theta \ge 3 \mu$, $\theta \le \lambda + \mu + 1$ and $\theta \ge 2 \nu$, we have $\lambda = \nu = 1$ and $\theta = 3$, that is, $(\theta,\lambda,\mu,\nu) = (3,1,1,1)$.
Therefore the proof is completed.
\end{proof}

\begin{Rem}
If $(\theta,\lambda,\mu,\nu) = (1,0,0,0)$, then $X$ is a hypersurface of bi-degree $(1,3)$ on $\mbP^{n-2} \times \mbP^3$ and it is clearly rational.
If $(\theta,\lambda,\mu,\nu) = (3,1,1,1)$, then $X$ is birational to a (nonsinglar) cubic $n$-fold.
More precisely $X$ is the blowup of a nonsingular cubic $n$-fold along a nonsingular plane cubic curve.
\end{Rem}

In the following subsections we assume that $(\theta,\lambda,\mu,\nu)$ satisfies the assumption of Lemma \ref{lem:dP3num} and that $(\theta,\lambda,\mu,\nu) \ne (1,0,0,0)$.
Note that we have $\theta \ge 2$ by Lemma \ref{lem:dP3num}.

\subsection{Case $\theta > 3 \nu$} \label{sec:dP3case1}

In this section we keep the following setting.

\begin{Setting} \label{setting:dP3-1}
\begin{itemize}
\item The ground field $\K$ is algebraically closed and $\chara (\K) = 3$.
\item $X$ is a hypersurface in $P = P_3 (n,\lambda,\mu,\nu)$ defined by
\[
F := a_{\theta - 3 \nu} w^3 + f (u,x,y,z) = 0,
\]
where $a_{\theta - 3 \nu} = a_{\theta - 3 \nu} (u)$ is homogeneous of degree $\theta - 3 \nu > 0$ and $f (u,x,y,z)$ is of bi-degree $(\theta,3)$. 
We assume that $a$ and $f$ are both general.
\item $R$ is the $\mbP (1,1,1,3)$-bundle over $\mbP^{n-2}$ defined by
\[
\begin{pmatrix}
u_0 & \cdots & u_{n-2} & & x & y & z & \bar{w} \\
1 & \cdots & 1 &  | & 0 & \lambda & \mu & 3 \nu \\
0 & \cdots & 0 &  | & 1 & 1 & 1 & 3
\end{pmatrix}.
\]
\item $Z$ is the hypersurface in $R$ defined by $\bar{F} :=  a \bar{w} + f = 0$.
\item $\mcL = \mcO_Z (\nu,1)$.
\item $\tau \colon X \to Z$ is the restriction of the triple cover $P \to R$, which is a triple cover of $Z$ branched along $\bar{w} \in H^0 (Z, \mcL^3)$.
\item $Z^{\circ} = Z \setminus (x = y = z = 0) \cap Z$, $X^{\circ} = \tau^{-1} (Z^{\circ})$ and $\tau^{\circ} = \tau|_{X^{\circ}}$.
\end{itemize}
\end{Setting}

\begin{Lem} \label{lem:nonsingZdP3}
$Z^{\circ}$ is nonsingular and $X$ is nonsingular along $X \setminus X^{\circ}$.
\end{Lem}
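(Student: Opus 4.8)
The plan is to verify both statements with the Jacobian criterion, exploiting that $\chara(\K)=3$ kills the relevant Euler derivatives.

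\emph{Nonsingularity of $Z^\circ$.} The bundle $R$ is nonsingular away from its singular locus $\Theta:=(x=y=z=0)\cong\mbP^{n-2}$ (the section carrying the weight-$3$ coordinate $\bar w$), and $Z^\circ\subset R\setminus\Theta$ by definition, so it is enough to show $Z$ is nonsingular at every point of $Z\setminus\Theta$. Since $a\bar w$ is the only monomial of $\bar F=a\bar w+f$ involving $\bar w$, on the open set $(a\ne 0)$ one may eliminate $\bar w=-f/a$ (note $f/a$ has bidegree $(3\nu,3)$, that of $\bar w$) and identify $Z\cap(a\ne 0)$ with an open subset of the nonsingular $\mbP^2$-bundle $R'$ over $\mbP^{n-2}$ with Cox coordinates $u_0,\dots,u_{n-2},x,y,z$ carrying the same weights as in $R$; hence $Z$ is nonsingular there. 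For a point $\msp\in Z\setminus\Theta$ with $a(\msp)=0$ one has $f(\msp)=0$, and computing $\nabla\bar F$ at $\msp$ — using the Euler identities $x\,\partial_x f+y\,\partial_y f+z\,\partial_z f=3f=0$ (trivial since $\chara\K=3$) together with $\sum_i u_i\,\partial_{u_i}f=\theta f$ and $\sum_i u_i\,\partial_{u_i}a=(\theta-3\nu)a$ to discard the redundant $\partial_x f$-equation and one of the $\partial_{u_i}$-equations — shows that $Z$ is singular at $\msp$ only if the complete intersection $Y_0:=(a=f=0)\subset R'$ is singular at the image of $\msp$ under the projection $Z^\circ\to R'$ that forgets $\bar w$ (here $(a=0)\subset\mbP^{n-2}$ is nonsingular, since $\theta-3\nu>0$ and $a$ is general, so $\nabla_u a$ is nowhere zero on it). As $a$ is a general form pulled back from $\mbP^{n-2}$ and $f$ a general section of $\mcO_{R'}(\theta,3)$, Bertini's theorem shows $Y_0$ is nonsingular: one feeds in the surjectivity of the restriction maps of $\mcO_{R'}(\theta,3)$ supplied by Lemma \ref{lem:surjrest}, which applies because $\theta\ge 3\mu\ge 3\lambda$, with the borderline strata of $R'$ not covered by that lemma handled by inspecting $Y_0$ there directly. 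Hence $Z^\circ$ is nonsingular.

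\emph{Nonsingularity of $X$ along $X\setminus X^\circ$.} Here $X\setminus X^\circ=\tau^{-1}(\Theta\cap Z)=(x=y=z=0)\cap(a=0)\subset P$, and $P$ is everywhere nonsingular, being a $\mbP^3$-bundle, so only the Jacobian of $F=aw^3+f$ matters. Every such point $\msp$ has $w\ne 0$; working in the chart $(u_0\ne 0)\cap(w\ne 0)$ one has $\partial F/\partial w=3aw^2=0$ since $\chara\K=3$, while $\partial F/\partial x$, $\partial F/\partial y$, $\partial F/\partial z$ and $\partial F/\partial u_i-w^3\,\partial a/\partial u_i$ all vanish at $\msp$ because every monomial of $f$ has fiber-degree $3$ in $x,y,z$. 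Thus $X$ could be singular at $\msp$ only if $\nabla_u a(\msp)=0$, which together with $a(\msp)=0$ would make $(a=0)\subset\mbP^{n-2}$ singular, contradicting the genericity of $a$. Hence $X$ is nonsingular along $X\setminus X^\circ$.

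\emph{Main obstacle.} Everything except the nonsingularity of $Y_0=(a=f=0)\subset R'$ is a routine jet computation; that last point is a Bertini statement in characteristic $3$, and it is the real work. One must control the locus of $[u]\in\mbP^{n-2}$ over which the plane-cubic fibre $f(u,x,y,z)=0$ acquires a singular point — including the characteristic-$3$ degeneration in which this cubic is a cube of a linear form, hence singular along a whole line — and intersect that locus with $(a=0)$; this is where the numerical hypotheses $\theta\ge 3\mu$ and the bounds of Lemma \ref{lem:dP3num} are used, and the borderline values $\theta=3\mu$ (resp.\ $\theta=3\mu=3\lambda$) require the extra remark that $Y_0$ then avoids the stratum $\Gamma_z$ of $R'$ (resp.\ is nonsingular along $\Pi_y$ for elementary reasons).
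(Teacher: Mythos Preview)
Your argument is correct and follows the same strategy as the paper: reduce nonsingularity of $Z^\circ$ to nonsingularity of the complete intersection $(a=f=0)$ in the $\mbP^2$-bundle (your $R'$, the paper's $Q$), then invoke Bertini; and for $X\setminus X^\circ$, use that $(a=0)\subset\mbP^{n-2}$ is nonsingular. The paper packages the first reduction as Lemma~\ref{lem:crismhyp} (which you re-derive by hand via the Jacobian and Euler relations) and dispatches the Bertini step more cleanly by observing that $\mcO_Q(\theta,3)$ is very ample on $Q$, hence also on the nonsingular $\mbP^2$-bundle $H=(a=0)$ over the nonsingular hypersurface $(a=0)\subset\mbP^{n-2}$, so classical Bertini applies directly without any jet bookkeeping; your route through Lemma~\ref{lem:surjrest} is equivalent but more laborious.

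One minor confusion: your ``Main obstacle'' paragraph worries about the borderline $\theta=3\mu$ (and $\theta=3\mu=3\lambda$), but this cannot occur here. Setting~\ref{setting:dP3-1} is the case $\theta>3\nu$, and since $\nu\ge\mu$ this forces $\theta>3\mu$ strictly; the delicate cases you anticipate belong to Section~\ref{sec:dP3case2} ($\theta=3\nu$), where they are indeed handled separately. So you may simply drop that discussion.
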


\begin{proof}
It is clear that the singular locus of $Z^{\circ}$ is contained in $(a = f = 0)$. 
We will show that $(\bar{w} = a = f = 0) \subset R$ is nonsingular, which implies that $Z^{\circ}$ is nonsingular by Lemma \ref{lem:crismhyp}. 
We see that $(\bar{w} = 0) \subset R$ is isomorphic to the $\mbP^2$-bundle $Q$ over $\mbP^{n-2}$ defined by
\[
\begin{pmatrix}
u_0 & \cdots & u_{n-2} & & x & y & z \\
1 & \cdots & 1 & | & 0 & \lambda & \mu \\
0 & \cdots & 0 & | & 1 & 1 & 1
\end{pmatrix},
\]
which is also isomorphic to $\mbP_{\mbP^{n-2}} (\mcE)$, where $\mcE$ is a direct sum of three invertible sheaves on $\mbP^{n-2}$.
Let $H$ be the hypersurface in $Q$ defined by $a = 0$.
Then it is isomorphic to $\mbP_{H'} (\mcE|_{H'})$, where $H'$ is the hypersurface in $\mbP^{n-2}$ defined by $a = 0$.
The hypersurface $H' \subset \mbP^{n-2}$ is nonsingular since $a$ is general, and hence $H$ is nonsingular. 
We see that $(\bar{w} = a = f = 0) \subset Q$ is isomorphic to the hypersurface in $H$ defined by $f = 0$.
The section $f$ can be viewed as a section of $\mcO_{Q} (\theta,3)$ which is very ample since $\theta > \nu$.
It follows that $f|_V$ is a general section of the very ample invertible sheaf $\mcO_{Q} (\theta,3)|_H$ and $(\bar{w} = a = f = 0)$ is nonsingular.

We prove the latter part.
We have $X \setminus X^{\circ} = (x = y = z = 0) \subset P$.
It is easy to see that $X$ is nonsingular along $(x = y = z = 0)$ if the hypersurface in $\mbP^{n-2}$ defined by $a = 0$ is nonsingular.
The latter is clearly satisfied since $a$ is general.
\end{proof}

Since $Z^{\circ}$ is nonsinguar, we can define $\mcM^{\circ}$ to be the invertible sheaf on $X^{\circ}$ associated to the triple cover $\tau^{\circ}$, and we set $\mcM = \iota_*\mcM^{\circ}$, where $\iota \colon X^{\circ} \inj X$.

\begin{Lem} \label{lem:critdP1case1}
The section $\bar{w}$ has only nondegenerate critical points on $Z^{\circ}$.
\end{Lem}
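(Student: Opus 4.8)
The plan is to divide $Z^{\circ}$ according to the vanishing of the coefficient $a=a(u)$ of $\bar w$ in $\bar F=a\bar w+f$; since $a$ depends only on the base variables, $(a=0)$ is the preimage of a hypersurface of $\mbP^{n-2}$. On $(a=0)$ I will show that $\bar w$ has no critical point, and on $(a\neq0)$ that it has only nondegenerate ones, and these two facts together give the claim. Two preliminary remarks: $Z^{\circ}$ is nonsingular by Lemma \ref{lem:nonsingZdP3}; and, since $f$ has bi-degree $(\theta,3)$, every monomial of $f$ involves $x$, $y$ or $z$, so $f(u,0,0,0)=0$, while at the point $x=y=z=0$ of a fibre $\mbP(1,1,1,3)$ the coordinate $\bar w$ is nonzero; hence $(x=y=z=0)\cap Z$ is contained in $(a=0)$, so $(a\neq0)\cap Z\subseteq Z^{\circ}$ and $Z^{\circ}$ is the union of $(a\neq0)\cap Z$ and $(a=0)\cap Z^{\circ}$.

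On $(a=0)\cap Z^{\circ}$: at such a point $\msp$ one has $\partial\bar F/\partial\bar w=a(\msp)=0$, so by smoothness of $Z$ at $\msp$ some other partial derivative of $\bar F$ is nonzero; thus $\bar F=0$ can be solved for one of the coordinates other than $\bar w$, and $\bar w$ extends to a system of local coordinates of $Z$ at $\msp$. In the corresponding local trivialisation of $\mcL^{3}$ the section $\bar w$ is the coordinate function $\bar w$, whose linear part is nonzero; hence $\msp$ is not a critical point of $\bar w$.

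On $(a\neq0)\cap Z$: eliminating $\bar w$ through $\bar w=-f/a$ identifies this open set with $Q|_{(a\neq0)}$, where $Q$ is the $\mbP^{2}$-bundle over $\mbP^{n-2}$ appearing in the proof of Lemma \ref{lem:nonsingZdP3} (weights $(0,1),(\lambda,1),(\mu,1)$ on $x,y,z$); the gradings are compatible, and $\bar w|_{Z}$ corresponds to $-f/a\in H^{0}(Q|_{(a\neq0)},\mcO_{Q}(3\nu,3))$ under this identification. For a fixed general $a$ and a fixed point $\msp\in Q$, after trivialising the relevant line bundles near $\msp$ the assignment sending $f$ to the class of $f/a$ in $\mcO_{Q}(3\nu,3)\otimes\mcO_{Q}/\mfm_{\msp}^{k}$ equals $r_{k}(\msp)$ for $\mcO_{Q}(\theta,3)$ followed by multiplication by the invertible class of $1/a$, hence is surjective whenever $r_{k}(\msp)$ is. In this subsection $\theta>3\nu\geq3\mu$, and using $\theta\geq2$ (Lemma \ref{lem:dP3num}(6), as $(\theta,\lambda,\mu,\nu)\neq(1,0,0,0)$) together with the normalisation $0\leq\lambda\leq\mu\leq\nu$ one checks $\theta\geq\max\{2,2\lambda,2\mu\}$, $\theta\geq\max\{3\lambda+1,2\lambda+\mu\}$ and $\theta\geq3\mu+1$; Lemma \ref{lem:surjrest} then gives that $r_{3}(\msp)$ is surjective on $U_{x}\subset Q$ and $r_{2}(\msp)$ is surjective on $\Pi_{y}\cup\Gamma_{z}\subset Q$. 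The standard count — a degenerate critical point at a given point of $U_{x}$ imposes at least $\dim Q+1$ conditions on $f$, while having any critical point at a given point of $\Pi_{y}\cup\Gamma_{z}$ imposes $\dim Q$ conditions and $\dim(\Pi_{y}\cup\Gamma_{z})=\dim Q-1$ — shows that for general $f$ the section $-f/a$ has only nondegenerate critical points on $Q$, hence so does $\bar w|_{Z}$ on $(a\neq0)\cap Z$.

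The step I expect to require the most care is the reduction on $(a\neq0)$: one must transfer genericity of $f$ to genericity of $f/a$, and it is convenient that this goes through the restriction maps of $\mcO_{Q}(\theta,3)$ composed with the invertible class of $1/a$, rather than through a general section of $\mcO_{Q}(3\nu,3)$, because $3\nu$ may equal $3\mu$ while $\theta$ is strictly larger, so the numerical hypotheses of Lemma \ref{lem:surjrest} are available only for $\theta$. The remaining ingredients — the smoothness input from Lemma \ref{lem:nonsingZdP3}, the coordinate-completion argument on $(a=0)$, and the verification of the inequalities feeding Lemma \ref{lem:surjrest} — are routine.
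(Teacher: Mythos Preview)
Your proof is correct and follows essentially the same route as the paper: split $Z^{\circ}$ according to whether $a$ vanishes, dispose of $(a=0)\cap Z^{\circ}$ by observing that $\bar w$ is part of a local coordinate system there, and on $(a\neq 0)$ reduce to the surjectivity of the restriction maps $r_3(\msp)$ on $U_x$ and $r_2(\msp)$ on $\Pi_y\cup\Gamma_z$ for $\mcO_Q(\theta,3)$, which Lemma~\ref{lem:surjrest} supplies from $\theta\geq\max\{2,2\lambda,2\mu\}$ and $\theta>3\mu$. The only cosmetic difference is in how the unit $a$ is handled on $(a\neq 0)$: the paper multiplies $\bar w$ by the cube $a^{3}$ (invoking Remark~\ref{rem:critunit}) so as to work with the global polynomial section $a^{2}f\in H^{0}(Q,\mcO_Q(3\theta-6\nu,3))$, whereas you divide by $a$ and work with the local section $-f/a$; both manoeuvres reduce the question to the same restriction maps for $\mcO_Q(\theta,3)$.
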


\begin{proof}
Clearly $\bar{w}$ does not have a critical point on $(a = 0) \cap Z^{\circ}$.
On the open subset $(a \ne 0) \cap Z^{\circ}$, the section $\bar{w}$ has a critical point if and only if $- a^3 \bar{w} = a^2 f$ has a critical point (see Remark \ref{rem:critunit}).
Let $Q$ be the $\mbP^2$-bundle over $\mbP^{n-2}$ defined in the proof of Lemma \ref{lem:nonsingZdP3}, which is isomorphic to $(\bar{w} = 0) \subset R$.
It is enough to show that the section $a^2 f$, viewed as a section on $Q$, has only nondegenerate critical points on the open set $U = (a \ne 0) \subset Q$.
We define
\[
V = \{ a^2 g \mid g \in H^0 (Q, \mcO_{Q} (\theta,3)) \} \subset H^0 (Q, \mcO_{Q} (3 \theta - 6 \nu, 3)).
\]
which is a $\K$-vector space (note that we are fixing $a$).
For a point $\msp \in U$ and an integer $k > 0$, we consider the restriction maps
\[
r_{V, k} (\msp) \colon V \to \mcO_{Q} (3 \theta - 6 \nu,3)) \otimes (\mcO_{Q}/\mfm^k_{\msp}),
\]
\[
r_k (\msp) \colon H^0 (Q, \mcO_{Q} (\theta,3)) \to \mcO_{Q} (\theta,3) \otimes (\mcO_{Q}/\mfm_{\msp}^k).
\]
If $r_k (\msp)$ is surjective, then so is $r_{V,k} (\msp)$ since $a$ does not vanish at $\msp \in U$.
Thus, by the dimension counting argument, it is enough to show that $r_3 (\msp)$ and $r_2 (\msp)$ are surjective at any point $\msp \in (x \ne 0) \subset Q$ and $\msp \in (x = 0) \subset Q$, respectively. 
But this follows from Lemma \ref{lem:surjrest} since $\theta \ge \max \{2, 2 \lambda, 2 \mu\}$ and $\theta > 3 \mu$, and the proof is completed.
\end{proof}

\subsection{Case $\theta = 3 \nu$} \label{sec:dP3case2}

In this subsection we keep the following setting.

\begin{Setting} \label{setting:dP3-2}
\begin{itemize}
\item The ground field $\K$ is algebraically closed and of $\chara (\K) = 3$.
\item $X$ is a hypersurface in $P = P_3 (n,\lambda,\mu,\nu)$ defined by
\[
F := w^3 + f (u,x,y,z) = 0,
\]
where $f (u,x,y,z)$ is a general polynomial of bi-degree $(\theta,3)$. 
\item $Z$ is the $\mbP^2$-bundle over $\mbP^{n-2}$ defined by
\[
\begin{pmatrix}
u_0 & \cdots & u_{n-2} & & x & y & z \\
1 & \cdots & 1 &  | & 0 & \lambda & \mu \\
0 & \cdots & 0 &  | & 1 & 1 & 1
\end{pmatrix}.
\]
\item $\mcL = \mcO_Z (\nu,1)$.
\item $\tau \colon X \to Z$ is the restriction of the projection $P \ratmap Z$, which is a triple cover of $Z$ branched along $f \in H^0 (Z, \mcL^3)$.
\item $\mcM$ is the invertible sheaf on $X$ associated to the triple cover $\tau$.
\end{itemize}
\end{Setting}

We set 
\[
U_x = (x \ne 0) \subset Z, \ 
\Pi_y = (x = 0) \cap (y \ne 0) \subset Z, \ 
\Pi_z = (x = y = 0) \subset Z,
\]
so that we have $Z = U_x \cup \Pi_y \cup \Pi_z$.

\subsubsection{Subcase $\nu > \mu$}

\begin{Lem}
The section $f \in H^0 (Z, \mcO_Z (\theta,3))$ has only nondegenerate critical points on $Z$.
\end{Lem}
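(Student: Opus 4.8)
The plan is to apply Lemma \ref{lem:surjrest} together with the dimension-counting remarks following it, using the decomposition $Z = U_x \cup \Pi_y \cup \Pi_z$. Since we work in characteristic $p = 3$, the branch section is $f \in H^0(Z, \mcL^3)$ with $\mcL^3 = \mcO_Z(3\nu,3) = \mcO_Z(\theta,3)$, and the criterion is clean: surjectivity of the $3$rd restriction map of $\mcL^3$ at a point makes a general section have only a nondegenerate critical point there (this is where $p \neq 2$ is used, so no parity issue arises), while surjectivity of the $2$nd restriction map of $\mcL^3$ along a subvariety of dimension $< \dim Z = n$ makes a general section have no critical point along it.

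First I would handle $U_x$ via Lemma \ref{lem:surjrest}(1) with $\delta = \theta = 3\nu$, $d = 3$, $m = 1$. The hypothesis $d \geq 3m$ is immediate, and $\delta \geq \max\{2, 2\lambda, 2\mu\}$ follows from $0 \leq \lambda \leq \mu < \nu$ together with $\nu \geq 1$ (recall $\theta = 3\nu \geq 2$): indeed $3\nu > 3\mu \geq 2\mu$, $3\nu > 3\lambda \geq 2\lambda$, and $3\nu \geq 3 > 2$. Hence $r_3(\msp)$ is surjective for every $\msp \in U_x$ and a general $f$ has only nondegenerate critical points on $U_x$.

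Next I would rule out critical points on the lower-dimensional strata. On $\Pi_y$ I apply Lemma \ref{lem:surjrest}(3): the condition $\mu \geq \lambda$ holds, $3\nu \geq 3\lambda + 1$ holds because $\nu \geq \lambda + 1$, and $3\nu > 3\mu = \mu + 2\mu \geq \mu + 2\lambda$; thus $r_2(\msp)$ is surjective on $\Pi_y$, and as $\dim \Pi_y = n - 1 < n$ a general $f$ has no critical point on $\Pi_y$. On $\Pi_z$ (which plays the role of $\Gamma_z$ in Lemma \ref{lem:surjrest}) I apply Lemma \ref{lem:surjrest}(4): here $m = 1$, $\lambda \leq \mu$, and $3\nu \geq 3\mu + 1$ since $\nu \geq \mu + 1$; hence $r_2(\msp)$ is surjective on $\Pi_z$, and $\dim \Pi_z = n - 2 < n$ forces a general $f$ to have no critical point on $\Pi_z$.

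Putting the three cases together, a general $f$ has only nondegenerate critical points on $U_x$ and none on $\Pi_y \cup \Pi_z$, hence only nondegenerate critical points on all of $Z$, as claimed. In this subcase there is no genuine obstacle: the critical locus is a finite set of isolated nondegenerate points, so Proposition \ref{prop:existresol} will apply directly, and the only work is the bookkeeping of the numerical inequalities, all of which reduce to $0 \leq \lambda \leq \mu < \nu$ and $\nu \geq 1$. The harder cases — where the critical locus becomes positive-dimensional and one must invoke Lemmas \ref{lem:resol1} and \ref{lem:resol2} — arise only when the strict inequality $\nu > \mu$ fails.
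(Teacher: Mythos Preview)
Your proof is correct and follows the same approach as the paper: apply Lemma~\ref{lem:surjrest} to get surjectivity of $r_3(\msp)$ on $U_x$ and of $r_2(\msp)$ on $\Pi_y \cup \Pi_z$, then invoke the dimension-counting remark. The paper's proof is a single sentence citing the inequalities $\theta \ge \max\{2,2\lambda,2\mu\}$ and $\theta \ge 3\mu+1$; the latter (with $m=1$, $\lambda\le\mu$) already forces the hypothesis of part~(3), so the paper does not separate out $\Pi_y$ as you do.
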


\begin{proof}
This follows from Lemma \ref{lem:surjrest} since $\theta \ge \max \{2, 2 \lambda, 2 \mu\}$ and $\theta \ge 3 \mu + 1$.
\end{proof}

\subsubsection{Subcase $\nu = \mu > \lambda$}

\begin{Lem}
The section $f \in H^0 (Z, \mcL^3)$ has only nondegenerate critical points on $Z \setminus \Gamma_z$.
\end{Lem}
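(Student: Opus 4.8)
The plan is to run the same template used for all the preceding nondegeneracy lemmas in this section: reduce the claim to surjectivity of suitable low-order restriction maps of $\mcL^3 = \mcO_Z(3\nu,3)$ on the two strata $U_x$ and $\Pi_y$ that make up $Z\setminus\Gamma_z$, and then invoke the dimension-counting principle recorded in the Remark following Lemma \ref{lem:surjrest}. The relevant numerical data for $Z$ are $d=3$, $m=1$, and the section has bidegree $\delta=3\nu$; moreover we are in the regime $0\le\lambda<\mu=\nu$, so in particular $\mu\ge 1$ and $\nu\ge 1$.

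First I would treat the open stratum $U_x$. Since $\chara(\K)=3\ne 2$, the Remark says that surjectivity of the \emph{third} restriction map of $\mcL^3$ at a point of $U_x$ already forces a general $f$ to have a nondegenerate critical point there. This surjectivity is exactly Lemma \ref{lem:surjrest}(1), whose hypothesis $\delta\ge\max\{2,2\lambda,2\mu\}$ reads $3\nu\ge\max\{2,2\lambda,2\mu\}$; this holds because $3\mu\ge 3>2$, $3\mu\ge 2\mu$, and $3\mu>3\lambda\ge 2\lambda$. Hence a general $f$ has only nondegenerate critical points on $U_x$.

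Next I would treat $\Pi_y$, where the goal is stronger: a general $f$ should have \emph{no} critical point on $\Pi_y$ at all. This follows once the \emph{second} restriction map of $\mcL^3$ is surjective at each point of $\Pi_y$, since that imposes $\dim Z$ independent conditions, strictly more than $\dim\Pi_y$. The required surjectivity is Lemma \ref{lem:surjrest}(3): its hypotheses $\mu\ge m\lambda=\lambda$ and $\delta\ge\max\{d\lambda+1,(d-m)\lambda+\mu\}=\max\{3\lambda+1,2\lambda+\mu\}$ both hold here, because $\mu>\lambda$ forces $\mu\ge\lambda+1$ whence $3\mu\ge 3\lambda+3>3\lambda+1$, and $\mu\ge\lambda$ gives $3\mu\ge 2\lambda+\mu$. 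Combining the two strata yields the statement, as $Z\setminus\Gamma_z=U_x\cup\Pi_y$.

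There is essentially no hard step here — the argument is a bookkeeping exercise in these inequalities. The only genuine limitation, and the reason the statement stops at $Z\setminus\Gamma_z$, is that on the closed stratum $\Gamma_z=(x=y=0)$ one would need $\delta\ge d\mu+1=3\mu+1$ in order to apply Lemma \ref{lem:surjrest}(4), and this fails precisely because $\delta=3\nu=3\mu$. Thus a general $f$ will still carry critical points along the positive-dimensional locus $\Gamma_z$, and resolving the covering along $\tau^{-1}(\Gamma_z)$ (via an explicit blow-up in the spirit of Lemmas \ref{lem:resol1} and \ref{lem:resol2}) is what the subsequent analysis must address.
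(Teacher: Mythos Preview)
Your proposal is correct and matches the paper's own proof essentially line for line: the paper also reduces to Lemma~\ref{lem:surjrest}, checking $\theta=3\nu\ge\max\{2,2\lambda,2\mu\}$ for part~(1) on $U_x$ and $\theta=3\nu\ge\max\{3\lambda+1,2\lambda+\mu\}$ for part~(3) on $\Pi_y$. Your write-up is simply more explicit about verifying the inequalities and about why the third (rather than fourth) restriction map suffices in characteristic~$3$.
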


\begin{proof}
This follows from Lemma \ref{lem:surjrest} since $\theta = 3 \nu \ge \max \{2, 2 \lambda, 2 \mu\}$ and $\theta = 3 \nu \ge \max \{3 \lambda + 1, 2 \lambda + \mu \}$.
\end{proof} 

We can write
\[
f = \alpha z^3 + a_{\mu} z^2 x + b_{\mu - \lambda} z^2 y + g (u,x,y,z),
\]
where $\alpha \in \K$, $a_{\mu}, b_{\mu - \lambda}$ are polynomials in $u$ of indicated degree and $g \in (x,y)^2$.
Let $\Crit (f)$ be the set of critical points of $f$.

\begin{Lem}
We have $\Crit (f) = C_1 \cup C_2$, where $C_1$ consists of nondegenerate critical points on $Z \setminus \Gamma_z$ and $C_2 = (x = y = a = b = 0)$.
\end{Lem}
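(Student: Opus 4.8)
The plan is to decompose $\Crit(f)$ according to whether a critical point lies on the section $\Gamma_z = (x = y = 0)$ or not. Off $\Gamma_z$ there is nothing new to do: the preceding lemma already asserts that every critical point of $f$ on $Z \setminus \Gamma_z$ is nondegenerate, so $C_1 := \Crit(f) \cap (Z \setminus \Gamma_z)$ is exactly the set of nondegenerate critical points claimed. Thus the whole content of the lemma is to show that $\Crit(f) \cap \Gamma_z = (x = y = a = b = 0)$.

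For this I would work in the standard affine charts. Since $x = y = 0$ forces $z \neq 0$ on $Z$, the locus $\Gamma_z$ lies in $(z \neq 0) \subset Z$, and $z^3$ is a local generator of $\mcL^3 = \mcO_Z(\theta, 3)$ there; moreover $\Gamma_z$ is covered by the charts $U_{i,z} = (u_i \neq 0) \cap (z \neq 0)$. Because the notion of critical point is independent of the choice of local coordinates and local generator (as recalled in the Preliminaries), and by the symmetry among $u_0, \dots, u_{n-2}$, it is enough to analyze $f$ on $U_{0,z} \cong \mbA^n$, with coordinates $u_1, \dots, u_{n-2}, x, y$. There, restricting $f$ amounts to substituting $u_0 = z = 1$ (equivalently, dividing by the local generator $z^3$), which yields $f|_{U_{0,z}} = \alpha + \bar a(u)\, x + \bar b(u)\, y + \bar g(u, x, y)$ with $\bar g \in (x, y)^2$. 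For a point $\msp = (u^0, 0, 0)$ of $\Gamma_z \cap U_{0,z}$ I would translate $\msp$ to the origin and read off the linear part of $f|_{U_{0,z}}$: since each monomial of $\bar g$ has $x,y$-degree at least $2$, the linear part is simply $\bar a(u^0)\, x + \bar b(u^0)\, y$. Hence $\msp$ is a critical point precisely when $\bar a(u^0) = \bar b(u^0) = 0$, i.e.\ when $\msp \in (x = y = a = b = 0)$. Gluing over $i = 0, \dots, n-2$ gives $\Crit(f) \cap \Gamma_z = (x = y = a = b = 0) = C_2$, and combining with the preceding lemma gives $\Crit(f) = C_1 \cup C_2$.

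I do not expect a real obstacle here; the argument is bookkeeping with charts together with a one-line computation of a linear term. The only points needing a moment's care are that $z^3$ is a legitimate local generator of $\mcL^3$ near $\Gamma_z$ (clear, since $z$ is nonvanishing there) and that the restriction of a section to a chart $U_{i,z}$ is correctly computed by the substitution $u_i = z = 1$ --- which is exactly the description of the restriction maps recorded in the section on affine charts. Everything else follows at once from $\bar g \in (x,y)^2$.
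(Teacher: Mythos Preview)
Your proposal is correct and takes essentially the same approach as the paper, which simply sets $C_1 = \Crit(f) \cap (Z \setminus \Gamma_z)$ and $C_2 = \Crit(f) \cap \Gamma_z$ and declares that ``it is easy to see'' that $C_2 = (x = y = a = b = 0)$. Your chart computation on $U_{0,z}$---reading off the linear part $\bar a(u^0)\,x + \bar b(u^0)\,y$ from $f|_{U_{0,z}} = \alpha + \bar a\,x + \bar b\,y + \bar g$ with $\bar g \in (x,y)^2$---is exactly the verification the paper leaves implicit (and in fact the paper carries out the same restriction $f \sim_3 \bar a x + \bar b y + \bar g$ immediately after the lemma for other purposes).
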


\begin{proof}
Set $C_1 = \Crit (f) \cap (Z \setminus \Gamma_z)$ and $C_2 = \Crit (f) \cap \Gamma_z$ so that $\Crit (f) = C_1 \cup C_2$.
Then it is easy to see that $C_2 = (x = y = a = b = 0)$.
\end{proof}

The set $C_2$ is covered by the $U_{i,z} = (u_i \ne 0) \cap (z \ne 0) \subset Z$ for $i = 0,\dots,n-2$.
We analyze the restriction of $f$ to $U_{0,z}$.
The analysis for the other $U_{i,z}$ is completely the same.
The open set $U_{0,z}$ is isomorphic to $\mbA^n$ with affine coordinates $u_1,\dots,u_{n-2},x,y$ and we have
\[
f \sim_3 \bar{a} x + \bar{b} y + \bar{g},
\]
where $\bar{g} = g (1,u_1,\dots,u_{n-2},x,y,1)$ and similarly for $\bar{a}$ and $\bar{b}$.
We have $\deg a = \mu > 0$, $\deg b = \mu - \lambda > 0$, and the complete intersection in $\mbA^{n-2}_u$ defined by $\bar{a} = \bar{b} = 0$ is nonsingular since $a,b$ are general.
Moreover $\bar{g} \in (x,y)^2$.
Thus, by Lemma \ref{lem:resol2}, the blowup of $X$ along $\tau^{-1} (C_2)$ is universally $\CH_0$-trivial, resolves the singularity of $X$ along $\tau^{-1} (C_2)$ and pulls back $\mcM$ into a subsheaf of $\Omega^{n-1}$.

\subsubsection{Subcase $\nu = \mu = \lambda$}

\begin{Lem}
The section $f \in H^0 (Z, \mcL^3)$ has only nondegenerate critical points on $U_x$.
\end{Lem}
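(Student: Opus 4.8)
The plan is to obtain this exactly as the two preceding subcase lemmas were obtained: as a one-step consequence of Lemma~\ref{lem:surjrest}(1) together with the dimension-counting principle recorded in the Remark following it. Here the base field has characteristic $3$, the branch section lives in $H^0(Z,\mcL^3)$ with $\mcL^3\cong\mcO_Z(\theta,3)$, and $Z$ is a $\mbP^2$-bundle over $\mbP^{n-2}$, so in the notation of Lemma~\ref{lem:surjrest} we are in the case $d=3$, $m=1$, and $\dim Z=n$.

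First I would record the relevant numerical inequality. In this subcase $\lambda=\mu=\nu$ and $\theta=3\nu$; since $\theta\ge 2$ and $\theta=3\nu$ with $\nu$ an integer, this forces $\nu\ge 1$, hence $\lambda=\mu=\nu\ge 1$. Consequently $d=3=3m$ and
\[
\theta=3\nu\ \ge\ \max\{2,2\lambda,2\mu\},
\]
so the hypotheses of Lemma~\ref{lem:surjrest}(1) are satisfied and the $3$rd restriction map
\[
r_3(\msp)\colon H^0\bigl(Z,\mcO_Z(\theta,3)\bigr)\longrightarrow \mcO_Z(\theta,3)\otimes(\mcO_Z/\mfm_{\msp}^3)
\]
is surjective at every point $\msp\in U_x$.

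Next I would invoke the Remark following Lemma~\ref{lem:surjrest}: since $p=\chara(\K)=3\ne 2$, surjectivity of the $3$rd restriction map of $\mcL^3$ on $U_x$ forces a general $f\in H^0(Z,\mcL^3)$ to have only nondegenerate critical points on $U_x$. Unwinding this, prescribing the first- and second-order Taylor data of $f$ at a given point $\msp\in U_x$ is unobstructed, so ``$f$ has a critical point at $\msp$'' is $n=\dim Z$ independent conditions (vanishing of the linear part) and ``$f$ has a \emph{degenerate} critical point at $\msp$'' is one further condition ($\det H(f)(\msp)=0$); since $\dim U_x=n$, an incidence-variety dimension count shows that for general $f$ the set of $\msp\in U_x$ at which $f$ has a degenerate critical point is empty. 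This completes the argument.

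There is essentially no genuine obstacle here: the numerical verification is the only computation, and all the real content has been packaged into Lemma~\ref{lem:surjrest} and its Remark. The only point worth flagging is that the statement is deliberately restricted to $U_x$: the critical points of $f$ lying over $\Pi_y\cup\Pi_z$ will in general sweep out a positive-dimensional locus and must be treated separately, by bringing the local equation of the branch divisor into one of the normal forms of Condition~\ref{cond:resolposcrit1} or \ref{cond:resolposcrit2} and applying Lemma~\ref{lem:resol1} or \ref{lem:resol2}, as was done in the subcase $\nu=\mu>\lambda$.
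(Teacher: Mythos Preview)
Your proposal is correct and follows essentially the same approach as the paper: the paper's proof is the one-line statement ``This follows from Lemma~\ref{lem:surjrest} since $\theta \ge \max\{2, 2\lambda, 2\mu\}$,'' and you have simply unpacked this, verifying the numerical inequality in the present subcase and recalling explicitly how surjectivity of the third restriction map yields only nondegenerate critical points for general $f$ in characteristic $3$.
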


\begin{proof}
This follows from Lemma \ref{lem:surjrest} since $\theta \ge \max \{2, 2 \lambda, 2 \mu\}$.
\end{proof} 

We can write
\[
f = \alpha y^3 + \beta y^2 z + \gamma y z^2 + \delta z^3 + x (a_{\mu} y^2  + b_{\mu} y z + c_{\mu} z^2) + g (u,x,y,z),
\]
where $\alpha,\beta,\gamma,\delta \in \K$, $a_{\mu}, b_{\mu}, c_{\mu}$ are homogeneous polynomials in $u$ of degree $\mu$ and $g \in (x)^2$.
Let $\Crit (f)$ be the set of critical points of $f$.
Replacing $z$ and rescaling $y$, we may assume $\gamma = 0$ and $\beta = 1$.

\begin{Lem}
We have $\Crit (f) = C_1 \cup C_2$, where $C_1$ consists of nondegenerate critical points on $Z \setminus \Gamma_z$ and $C_2 = (x = y = c = 0)$.
\end{Lem}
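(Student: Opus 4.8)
The plan is to split $\Crit(f)$ along the stratification $Z = U_x \cup \Pi_y \cup \Gamma_z$ by setting $C_1 = \Crit(f) \cap (Z \setminus \Gamma_z)$ and $C_2 = \Crit(f) \cap \Gamma_z$, so that $\Crit(f) = C_1 \cup C_2$ is automatic; what must be shown is that $C_1$ consists of nondegenerate critical points and that $C_2 = (x = y = c = 0)$. Since the preceding lemma already gives that $f$ has only nondegenerate critical points on $U_x$, for the first assertion it is enough to rule out critical points on $\Pi_y$. Given $\msp \in \Pi_y$, after a projective change of coordinates on $\mbP^{n-2}$ we may assume the base coordinates of $\msp$ are $(1\!:\!0\!:\!\cdots\!:\!0)$; passing to the chart $U_{0,y}$ (substitute $u_0 = y = 1$), the section $f$ takes the shape $\alpha + z + \delta z^3 + x(\bar a + \bar b z + \bar c z^2) + \bar g$ with $\bar g \in (x^2)$, using the normalizations $\gamma = 0$ and $\beta = 1$. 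Translating $\msp$ to the origin and using $\chara(\K)=3$ (so that $\delta z^3$ contributes no linear term), the linear part of $f$ at $\msp$ is $z' + \bar a(\msp)\,x$, which is nonzero. Hence $\Crit(f) \cap \Pi_y = \emptyset$, so $C_1 = \Crit(f) \cap U_x$, which is exactly the desired statement about $C_1$.

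For $C_2$ I would work in the affine charts $U_{i,z} = (u_i \ne 0) \cap (z \ne 0)$, which cover $\Gamma_z$; by the symmetry of the construction it suffices to treat $U_{0,z}$. For $\msp \in \Gamma_z$, a projective change of coordinates on $\mbP^{n-2}$ reduces to $\msp = (1\!:\!0\!:\!\cdots\!:\!0; 0\!:\!0\!:\!1)$, and substituting $u_0 = z = 1$ the section $f$ becomes $\alpha y^3 + y^2 + \delta + x(\bar a\, y^2 + \bar b\, y + \bar c) + \bar g$, where $\bar g \in (x^2)$ and $\bar a, \bar b, \bar c$ are the dehomogenizations of $a_\mu, b_\mu, c_\mu$. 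Every monomial appearing here is either constant, divisible by $x$, or of order $\ge 2$ at $\msp$, and inspecting the terms divisible by $x$ shows that the linear part of $f$ at $\msp$ equals $c_\mu(\msp)\,x$. Therefore $\msp \in \Crit(f)$ precisely when $c_\mu$ vanishes at the image of $\msp$ in $\mbP^{n-2}$, which gives $C_2 = (x = y = c = 0)$ and finishes the proof.

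The one step that is not a mechanical inspection of linear parts in the standard charts is the vanishing of $\Crit(f)$ along $\Pi_y$: unlike in the previous subcases, Lemma \ref{lem:surjrest} contributes nothing over $\Pi_y$ here, because part (3) of that lemma requires a degree bound that fails in this subcase (one has $\theta = 3\lambda$, so the needed strict inequality is violated). Thus a dimension count is unavailable, and one must instead exploit the explicit normal form of $f$ together with the normalization $\beta = 1$, as above. The remaining computations are entirely parallel to those already carried out in the earlier subcases.
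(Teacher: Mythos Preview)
Your argument is correct and follows essentially the same route as the paper. The paper's proof simply sets $C_1 = \Crit(f) \cap U_x$, $C_2 = \Crit(f) \cap (Z \setminus U_x)$ and asserts ``it is easy to see that $C_2 = (x = y = c = 0)$'', which packages the vanishing of $\Crit(f)$ along $\Pi_y$ into that single claim; you instead split along $\Gamma_z$ and handle $\Pi_y$ explicitly as part of $C_1$. Since both arguments establish $\Crit(f) \cap \Pi_y = \emptyset$, the two decompositions yield the same sets, and your explicit chart computations in $U_{0,y}$ and $U_{0,z}$ are exactly the content hidden behind the paper's ``easy to see''.

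One harmless slip: in the $\Pi_y$ computation, the coefficient of $x$ in the linear part at $\msp$ is not $\bar a(\msp)$ but $\bar a(\msp) + \bar b(\msp)\,z_0 + \bar c(\msp)\,z_0^2$, where $z_0$ is the $z$-coordinate of $\msp$ in the chart. This does not affect the conclusion, since the coefficient of $z'$ is $1$ (using $\beta = 1$ and the fact that $\delta z^3$ has vanishing derivative in characteristic $3$), so the linear part is nonzero in any case. Your closing remark that Lemma~\ref{lem:surjrest}(3) is unavailable here because $\theta = 3\lambda < 3\lambda + 1$ is also correct and explains why the direct inspection of the normal form is needed on $\Pi_y$.
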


\begin{proof}
Set $C_1 = \Crit (f) \cap U_x$ and $C_2 = \Crit (f) \cap (Z \setminus U_x)$ so that $\Crit (f) = C_1 \cup C_2$.
Then it is easy to see that $C_2 = (x = y = c = 0)$.
\end{proof}

The set $C_2$ is covered by $U_{i,z} = (u_i \ne 0) \cap (z \ne 0) \subset Z$ for $i = 0,\dots,n-2$.
We analyze the restriction of $f$ to $U_{0,z}$.
The analysis for the other $U_{i,z}$ is completely the same.
We write $g = d_{2 \mu} x^2 z + h$, where $d_{3 \mu} = d_{3 \mu} (u)$ and $h = h (u,x,y,z)$ does not contain a monomial divisible by $x^2 z$.
Note that $h \in (x,y)^3$.
The open subset $U_{0,z}$ is isomorphic to $\mbA^n$ with affine coordinates $u_1,\dots,u_{n-2},x,y$ and we have
\[
f \sim_3 \bar{c} x + \bar{d} x^2 + \bar{b} x y + y^2 + (\bar{a} x y^2 + \bar{h}), 
\]
where $\bar{g} = g (1,u_1,\dots,u_{n-2},x,y,1)$ and similarly for the others.
We have $\deg (\bar{c}) > 0$ and the hypersurface in $\mbA^{n-2}_u$ defined by $\bar{c} = 0$ is nonsingular since $c$ is general.
Moreover we have $\bar{a} x y^2 + \bar{h} \in (x,y)^3$.
Thus, by Lemma \ref{lem:resol2}, the blowup of $X$ along $\tau^{-1} (C_2)$ is universally $\CH_0$-trivial, resolves the singularity of $X$ along $\tau^{-1} (C_2)$ and pulls back $\mcM$ into a subsheaf of $\Omega^{n-1}$.

\subsection{Case $\theta < 3 \nu$} \label{sec:dP3case3}

In this case we have $\theta \ge 2 \nu$ and we keep the following setting.

\begin{Setting} \label{setting:dP3-3}
\begin{itemize}
\item The ground field $\K$ is algebraically closed and $\chara (\K) = 2$.
\item $X$ is a hypersurface $X$ in $P = P_3 (n,\lambda,\mu,\nu)$ defined by
\[
F := w^2 (a_{\theta - 2 \nu} x + b_{\theta - 2 \nu - \lambda} y + c_{\theta - 2 \nu - \mu} z) + f (u,x,y,z) = 0,
\]
where $a,b,c$ are homogeneous polynomials of the indicated degree in the variable $u$ and $f (u,x,y,z)$ is of bi-degree $(\theta,3)$.
We assume that $a,b,c$ and $f$ are general.
\item $R'$ is the $\mbP (1,1,1,2)$-bundle over $\mbP^{n-2}$ defined by
\[
\begin{pmatrix}
u_0 & \cdots & u_{n-2} & & x & y & z & \bar{w} \\
1 & \cdots & 1 & | & 0 & \lambda & \mu & 2 \nu \\
0 & \cdots & 0 & | & 1 & 1 & 1 & 2
\end{pmatrix}
\]
\item $Z$ is the hypersurface in $R'$ defined by $\bar{F} := \bar{w} (a x + b y + c z) + f = 0$.
\item $\mcL = \mcO_Z (\nu,1)$.
\item $\tau \colon X \to Z$ is the restriction of the double cover $P \to R'$, which is the double cover of $Z$ branched along $\bar{w} \in H^0 (Z,\mcL^2)$.
\item $Z^{\circ} = Z \setminus (x = y = z = 0) \cap Z$, $X^{\circ} = \tau^{-1} (Z^{\circ})$ and $\tau^{\circ} = \tau|_{X^{\circ}}$. 
\end{itemize}
\end{Setting}

We set $g = a x + b y + c z$.
We understand, for example, that $c = 0$ when $\theta - 2 \nu - \mu < 0$.

\begin{Rem} \label{rem:commonsol}
By (3), (4) and (5) of Lemma \ref{lem:dP3num}, the system of equations $a = b = c = 0$ does not have a solution in $\mbP^{n-2}$.
\end{Rem}

\begin{Lem}
$Z^{\circ}$ is nonsingular and $X$ is nonsingular along $X \setminus X^{\circ}$.
\end{Lem}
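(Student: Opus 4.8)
The plan is to mimic the proof of Lemma~\ref{lem:nonsingZdP3}. Write $\bar{F} = \bar{w} g + f$ with $g = a x + b y + c z$, and note that $\partial \bar{F}/\partial \bar{w} = g$, so the singular locus of $Z$ is contained in $(g = 0) \cap Z = (g = f = 0)$. Since the section $(x = y = z = 0)$ is exactly the locus over which the fibre $\mbP(1,1,1,2)$ of $R'$ is singular, $Z^{\circ}$ lies in the smooth locus of $R'$, which is covered by the affine charts $U_{i,x}$, $U_{i,y}$, $U_{i,z}$ of $R'$. On each of these $\bar{F}$ has the shape $\bar{w}\,\bar{g} + \bar{f}$ with $\bar{g}$ and $\bar{f}$ independent of $\bar{w}$, so Lemma~\ref{lem:crismhyp} reduces the nonsingularity of $Z^{\circ}$ to the claim that $(g = f = 0) \subset Q$ is a nonsingular complete intersection, where $Q$ denotes the $\mbP^2$-bundle over $\mbP^{n-2}$ obtained from $R'$ by setting $\bar{w} = 0$ (equivalently, the $\mbP^2$-bundle of Setting~\ref{setting:dP3-2}).

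To prove this, I would first observe that $G := (g = 0) \subset Q$ is nonsingular: over each point of $\mbP^{n-2}$ its fibre is the line $\{a x + b y + c z = 0\} \subset \mbP^2$, which is a genuine line because $a, b, c$ have no common zero on $\mbP^{n-2}$ (Remark~\ref{rem:commonsol}); hence $G$ is a $\mbP^1$-bundle over $\mbP^{n-2}$, in particular smooth, and $(g = f = 0)$ is the zero scheme of $f|_G$. Next, since $\theta \geqslant 3 \mu$ by Lemma~\ref{lem:dP3num}, the line bundle $\mcO_Q(\theta, 3) = \mcO_Q(1,0)^{\otimes(\theta - 3\mu)} \otimes \mcO_Q(\mu, 1)^{\otimes 3}$ is a tensor product of base-point-free line bundles on $Q$, hence base-point-free; consequently the linear subsystem cut on $G$ by $|\mcO_Q(\theta, 3)|$ is base-point-free on the smooth variety $G$. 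As $f$ is a general section of $\mcO_Q(\theta,3)$, Bertini's theorem shows that $(f|_G = 0)$ is smooth, and it has the expected codimension since $f$ does not vanish on $G$. Together with Lemma~\ref{lem:crismhyp} this shows $Z^{\circ}$ is nonsingular.

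For the remaining assertion, note that $X \setminus X^{\circ} = (x = y = z = 0) \cap X$ equals the whole section $(x = y = z = 0) \cong \mbP^{n-2}$ of $P$, since $F = w^2 g + f$ vanishes identically when $x = y = z = 0$ (both $g$ and $f$ do). Let $\msp$ be a point of this section; then $\msp$ lies in the chart $(w \neq 0)$ of its fibre $\mbP^3$. Every monomial of $f$ and of $w^2 g = w^2(ax + by + cz)$ is divisible by one of $x, y, z$, so $\partial F/\partial u_i$ vanishes at $\msp$ for every $i$ (and $\partial F/\partial w = 2 w g = 0$ identically in characteristic $2$); the only partials of $F$ that can be nonzero at $\msp$ are $\partial F/\partial x = w^2 a$, $\partial F/\partial y = w^2 b$ and $\partial F/\partial z = w^2 c$. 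These cannot all vanish at $\msp$, because $w \neq 0$ there and $a, b, c$ have no common zero on $\mbP^{n-2}$ (Remark~\ref{rem:commonsol}). Hence $X$ is nonsingular along $X \setminus X^{\circ}$.

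The only genuinely non-formal ingredient is the base-point-freeness of $\mcO_Q(\theta, 3)$, which rests on the numerical inequality $\theta \geqslant 3 \mu$ established in Lemma~\ref{lem:dP3num}; everything else is the chart bookkeeping already performed for Lemma~\ref{lem:nonsingZdP3}, so I do not anticipate any serious difficulty.
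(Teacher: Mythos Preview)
Your proof is correct and follows essentially the same route as the paper: reduce via Lemma~\ref{lem:crismhyp} to showing that $(g=f=0)\subset Q$ is a nonsingular complete intersection, check that $G=(g=0)$ is smooth using Remark~\ref{rem:commonsol}, apply Bertini for the general section $f$, and then compute the partials $\partial F/\partial x,\partial F/\partial y,\partial F/\partial z$ along $(x=y=z=0)$ for the second assertion. The only cosmetic differences are that the paper separates off the case $\theta=2\nu$ (where $a$ is a nonzero constant) and claims very ampleness of $\mcO_Q(\theta,3)$ rather than your base-point-freeness; your uniform treatment is fine, and in characteristic~$2$ the Bertini step goes through because the morphism given by $|\mcO_Q(\theta,3)|$ is an embedding away from $\Gamma_z$, which a general $f$ avoids on $G$.
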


\begin{proof}
Let $Q$ be the $\mbP^2$-bundle over $\mbP^{n-2}$ defined in the proof of Lemma \ref{lem:nonsingZdP3}.
If $\theta = 2 \nu$, then $Z \cong Q$, so that $Z^{\circ} = Z$ is nonsingular and $X = X^{\circ}$.
In the following we assume that $\theta > 2 \nu$.
For the first assertion, it is enough to show that $\Delta := (\bar{w} = g = f = 0) \subset R'$ is nonsingular by Lemma \ref{lem:crismhyp}.
We identify $Q$ with $(\bar{w} = 0) \subset R'$. 
The hypersurface $H = (g = 0) \subset Q$ is nonsingular since $g = a x + b y + c z$ is linear with respect to $x,y,z$ and $a = b = c = 0$ has no non-trivial solution (see Remark \ref{rem:commonsol}).
The section $f$ can be viewed as a general element of $H^0 (Q,\mcO_{Q} (\theta,3))$ and $\mcO_{Q} (\theta,3)$ is very ample since $\theta > \mu$.
Now $\Delta$ isomorphic to the hypersurface of $H$ cut out by $f = 0$, which is nonsingular by Bertini theorem.

We prove the second assertion.
We have 
\[
\Xi := X \setminus X^{\circ} = (x = y = z = 0) \subset P.
\]
and 
\[
\frac{\prt F}{\prt x}|_{\Xi} = a w^2, \ 
\frac{\prt F}{\prt y}|_{\Xi} = b w^2, \ 
\frac{\prt F}{\prt z}|_{\Xi} = c w^2.
\]
From this we deduce that $X$ is nonsingualr along $\Xi = X \setminus X^{\circ}$ since the locus $(a = b = c = 0) \subset P$ is empty.
\end{proof}

Since $Z^{\circ}$ is nonsinguar, we can define $\mcM^{\circ}$ to be the invertible sheaf on $X^{\circ}$ associated to the double cover $\tau^{\circ}$, and we set $\mcM = \iota_*\mcM^{\circ}$, where $\iota \colon X^{\circ} \inj X$.

\begin{Lem}
The section $\bar{w} \in H^0 (Z, \mcL^2)$ has only $($almost$)$ nondegenerate critical points on $Z^{\circ}$.
\end{Lem}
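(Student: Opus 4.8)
The plan is to split $Z^{\circ}$ according to whether the form $g := a x + b y + c z$ vanishes or not, and to analyse the section $\bar{w}$ of $\mcL^2$ separately on the two pieces. On $(g = 0) \cap Z^{\circ}$ I would show that $\bar{w}$ has no critical point at all. Since $\bar{F} = \bar{w} g + f$ we have $\prt \bar{F}/\prt \bar{w} = g$, which vanishes identically along $(g = 0)$; on the other hand $Z^{\circ}$ is nonsingular (the previous lemma), so at any $\msp \in (g = 0) \cap Z^{\circ}$ the differential of $\bar{F}$ is nonzero in one of the directions $u_i, x, y, z$. Solving $\bar{F} = 0$ for the corresponding variable exhibits a unit multiple of a translate of $\bar{w}$ as part of a regular system of parameters of $Z$ at $\msp$, and since $\chara(\K) = 2$ this forces the linear part of $\bar{w}$ in these parameters to be nonzero, so $\msp \notin \Crit(\bar{w})$. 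This is the device already used in the proof of Lemma \ref{lem:critdP2case3}.

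On $(g \ne 0) \cap Z^{\circ}$ I would use the isomorphism $Z \cap (g \ne 0) \cong U := (g \ne 0) \subset Q$, where $Q$ is the $\mbP^2$-bundle over $\mbP^{n-2}$ with fibre coordinates $x, y, z$ of weights $(0,1), (\lambda, 1), (\mu, 1)$, under which $\bar{w}$ corresponds to the rational section $-f/g$ of $\mcO_Q(2\nu, 2)$, regular on $U$. As $g^{-1}$ is invertible on $U$ and $\chara(\K) = 2$, Remark \ref{rem:critunit} (with $a = g^{-1}$, using $-f/g = (g^{-1})^2 \cdot (-fg)$) shows that on $U$ the critical points of $\bar{w}$ coincide with those of $fg$, with the same (almost) nondegeneracy type; so it suffices to prove that for general $f$ the section $fg$ of $\mcO_Q(2\theta - 2\nu, 4)$ has only (almost) nondegenerate critical points on $U$. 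I would prove this by the dimension count from the remark following Lemma \ref{lem:surjrest}, applied to the $\K$-subspace $V := g \cdot H^0(Q, \mcO_Q(\theta, 3)) \subseteq H^0(Q, \mcO_Q(2\theta - 2\nu, 4))$ (here $g$ is fixed): since $g$ is a unit at every point of $U$, surjectivity of the $k$-th restriction map of $\mcO_Q(\theta, 3)$ at $\msp \in U$ propagates to surjectivity of the $k$-th restriction map of $V$ at $\msp$. On $U \cap U_x$ (notation of Lemma \ref{lem:surjrest}) the third restriction map of $\mcO_Q(\theta, 3)$ — the fourth, when $n$ is odd — is surjective by Lemma \ref{lem:surjrest}(1),(2), since $\theta \ge \max\{2, 2\lambda, 2\mu\}$ (resp.\ $\theta \ge \max\{3, 3\lambda, 3\mu\}$) follows from $\theta \ge 3\mu \ge 3\lambda$, $\theta \ge 2$ and $\lambda \le \mu$ (Lemma \ref{lem:dP3num}); hence $fg$ has only (almost) nondegenerate critical points there. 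On $U \cap \Pi_y$ and $U \cap \Gamma_z$, whose union has dimension $< \dim Q$, surjectivity of the second restriction maps (Lemma \ref{lem:surjrest}(3),(4), using $\lambda \le \mu$ and the relevant degree bounds) forces a general member of $V$, hence $fg$ for general $f$, to have no critical point.

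The main obstacle will be the boundary cases along $\Pi_y$ and $\Gamma_z$ when $\theta$ is close to $2\nu$, exactly as in the proof of Lemma \ref{lem:dP3crit1}. When $\theta - 2\nu - \lambda < 0$ or $\theta - 2\nu - \mu < 0$ one has $b = 0$ or $c = 0$, so $g$ does not involve $y$ (resp.\ $z$) and $U = (g \ne 0)$ then misses $\Pi_y$ (resp.\ $\Gamma_z$) altogether, removing the difficulty; in the remaining low cases one must bring in the finer numerical constraints of Lemma \ref{lem:dP3num}(3),(4),(5), and the very small quadruples — in particular $n$ odd with $\theta = 2$, e.g.\ $(\theta, \lambda, \mu, \nu) = (2, 0, 0, 1)$, where the needed restriction map just fails to be surjective — would be handled by hand, exhibiting an explicit member of $V$ with an almost nondegenerate critical point along the relevant fibre and counting parameters, in the style of the $\nu = 1$ analysis of Lemma \ref{lem:dP3crit1}. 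A minor remaining point is to make rigorous the assertion that $\bar{w}$ is part of a local coordinate system along $(g = 0) \cap Z^{\circ}$, which is formal given the smoothness of $Z^{\circ}$.
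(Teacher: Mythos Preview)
Your proposal is correct and follows essentially the same route as the paper. Both arguments reduce to showing that $gf$ (equivalently $-\bar{w}g^2$) has only (almost) nondegenerate critical points on $(g\ne 0)\subset Q$, then invoke the surjectivity criteria of Lemma~\ref{lem:surjrest} on $U_x$, $\Pi_y$, $\Gamma_z$, use the vanishing of $b$ or $c$ (forced by Lemma~\ref{lem:dP3num}) to eliminate the strata where surjectivity fails, and finally treat the exceptional quadruple $(\theta,\lambda,\mu,\nu)=(2,0,0,1)$ with $n$ odd by an explicit example; the paper organises the $\theta=3\mu$ case into the three subcases (i)--(iii) rather than phrasing it via ``$b=0$ or $c=0$'', but the content is the same.
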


\begin{proof}
As in the proof of Lemma \ref{lem:critdP1case1}, it is enough to show that the section $g f \ (= - \bar{w} g^2)$, which is viewed as a section of $\mcO_{Q} (2 \theta - 2 \nu ,4)$, has only (almost) nondegenerate critical points on $(g \ne 0) \subset Q$.
This follows if we show that the 4th (resp.\ 2nd) restriction map $r_4 (\msp)$ of $\mcO_{Q} (\theta,3)$ is surjective at any point of $U_x \cap (g \ne 0)$ (resp.\ $(\Pi_y \cup \Gamma_z) \cap (g \ne 0)$).

We first consider the case $\theta \ge 3$.
If in addition $\theta > 3 \mu$, then the assertion follows from Lemma \ref{lem:surjrest}.
In the following we assume $\theta = 3 \mu$.
Then, since $\theta < 2 \nu + \mu$, we are in one of the following cases:
\begin{enumerate}
\item[(i)] $n = 3$ and $2 \nu + \lambda < \theta = 3 \mu < 2 \nu + \mu$.
\item[(ii)] $\theta = 3 \mu = 2 \nu + \lambda$.
\item[(iii)] $\theta = 3 \mu = 2 \nu$.
\end{enumerate} 

In any of the above cases, we have $\theta \ge \max \{3, 3 \lambda, 3 \mu\}$ and, by Lemma \ref{lem:surjrest}, $r_4 (\msp)$ is surjective at any point $\msp \in U_x$ for a general $f \in H^0 (\mcO_{Q} (\theta,3))$.
If we are in case (iii), then the proof is completed since we may assume $g = x$ in this case and thus $(g \ne 0) = U_x$. 

Suppose that we are in case (i) or (ii).
We have $\theta \ge \max \{3 \lambda + 1, 2 \lambda + \mu \}$ and, by Lemma \ref{lem:surjrest}, $r_2 (\msp)$ is surjective at any point $\msp \in \Pi_y$.
It follows that $g f $ does not have a critical points along $\Pi_y$.
Now, since $g = a x + b y$, the set $(g \ne 0) \subset Q$ is contained in $U_x \cup \Pi_y$.
Thus $g f$ has only (almost) nondegenerate critical points on $(g \ne 0) \subset Q$.

We consider the case when $\theta < 3$.
In this case $(\theta,\lambda,\mu,\nu) = (2,0,0,1)$ and $a, b, c$ are constants.
Thus we may assume $g = x$ and it is enough to show that the section $x f$ has only (almost) nondegenerate critical points on $U_x$.
We have $\theta \ge \max \{2,2 \lambda, 2\mu\}$, which implies surjectiveity of $r_3 (\msp)$ for $\msp \in U_x$.
Thus, if $n$ is even, then $x f$ has only nondegenerate critical points on $U_x$.
It remains to consider the case when $n$ is odd.
Let $\msp \in U_x$ be a point.
It is clear that $r_2 (\msp)$ is surjective.
It follows that the sections $f \in H^0 (Q,\mcO_Q (2,3))$ such that $x f$ has a critical point at $\msp$ form a codimension $n$ subspace of $H^0 (Q,\mcO_Q (2,3))$.
Thus it is enough to show that the existence of $f \in H^0 (Q,\mcO_Q (2,3))$ which has an almost nondegenerate critical point at $\msp$.
We may assume that $\msp = (1\!:\!0\!:\!\cdots\!:\!0 ; 1\!:\!0\!:\!0)$ and consider a section
\[
f = (u_1 u_2 + \cdots + u_{n-4} u_{n-3}) x^3 + u_0 u_{n-2} x^2 y + u_0^2 z^3 + \cdots \in H^0 (Q, \mcO_Q (2,3)).
\]
It is easy to see that $f$ has almost nondegenerate critical point at $\msp$ and the proof is completed.
\end{proof}

We summarize the results of Sections \ref{sec:dP3case1}, \ref{sec:dP3case2} and \ref{sec:dP3case3}.
 
\begin{Prop} \label{prop:dP3goodresol}
Suppose that $(\theta,\lambda,\mu,\nu)$ satisfies the assumption of \emph{Lemma \ref{lem:dP3num}} and that $(\theta,\lambda,\mu,\nu) \ne (1,0,0,0)$.
Let $X$ be as in one of \emph{Settings \ref{setting:dP3-1}, \ref{setting:dP3-2}, \ref{setting:dP3-3}}.
Then there exists a universally $\CH_0$-trivial resolution $\varphi \colon Y \to X$ of singularities of $X$ such that $\varphi^*\mcM \inj \Omega_Y^{n-1}$, and we have
\[
\mcM \cong \mcO_X (\theta - \lambda - \mu - (n-1), 0).
\]
\end{Prop}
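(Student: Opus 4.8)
The plan is to go case by case through Settings \ref{setting:dP3-1}, \ref{setting:dP3-2}, \ref{setting:dP3-3} and, in each, assemble the critical point analysis of the corresponding subsection together with Proposition \ref{prop:existresol}, Lemmas \ref{lem:resol1} and \ref{lem:resol2}, and Remark \ref{rem:CHtriv}, exactly in the style of the proofs of Propositions \ref{prop:goodresoldP1} and \ref{prop:exitCHresoldP2}; then I compute $\mcM$ by adjunction.

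\emph{Existence of $\varphi$.} In each setting $X$ is nonsingular along $X \setminus X^{\circ}$ by the relevant non-singularity lemma (Lemma \ref{lem:nonsingZdP3} and its analogues), so $\mcM = \iota_*\mcM^{\circ}$ is an invertible sheaf on all of $X$, and $X$ can only be singular over the critical locus of the branch section of $\tau$ (the section $\bar w$ in Settings \ref{setting:dP3-1} and \ref{setting:dP3-3}, and $f$ in Setting \ref{setting:dP3-2}, where moreover $Z^{\circ} = Z$). The subsections of Sections \ref{sec:dP3case1}--\ref{sec:dP3case3} show that this critical locus is a disjoint union $C_1 \sqcup C_2$, where $C_1$ is a finite set of (almost) nondegenerate critical points and $C_2$ is a closed set which is empty unless $\theta = 3\nu$ and $\nu = \mu$, and which, when nonempty, satisfies Condition \ref{cond:resolposcrit1} or \ref{cond:resolposcrit2} along a neighbourhood of it. I take $\varphi \colon Y \to X$ to be the composite of: (i) the blowups of $X$ at the points of $\tau^{-1}(C_1)$, which resolve the corresponding singularities, are universally $\CH_0$-trivial, and pull $\mcM$ into $\Omega^{n-1}$ (see Proposition \ref{prop:existresol} and \cite[Proposition 4.1]{Okada}); and (ii) the blowup of $\tau^{-1}(C_2)$, which by Lemma \ref{lem:resol1} or \ref{lem:resol2} resolves the remaining singularities and pulls $\mcM$ into $\Omega^{n-1}$, and which is universally $\CH_0$-trivial by Remark \ref{rem:CHtriv}. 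Since a composite of universally $\CH_0$-trivial morphisms is universally $\CH_0$-trivial, $\varphi$ has the stated properties.

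\emph{Identification of $\mcM$.} The branch divisor is reduced since the defining data are general, so by \cite[Lemma V.5.9]{Kollar} one has $\mcM \cong \tau^*(\omega_Z \otimes \mcL^p)$, where $p = 3$ in Settings \ref{setting:dP3-1}, \ref{setting:dP3-2} and $p = 2$ in Setting \ref{setting:dP3-3}. In Setting \ref{setting:dP3-2}, $Z$ is the $\mbP^2$-bundle over $\mbP^{n-2}$, so $\omega_Z \cong \mcO_Z(-(n-1)-\lambda-\mu,-3)$; and since $\theta = 3\nu$ and $\mcL^3 = \mcO_Z(3\nu,3)$, we get $\omega_Z \otimes \mcL^3 \cong \mcO_Z(\theta - \lambda - \mu - (n-1),0)$. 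In Settings \ref{setting:dP3-1} and \ref{setting:dP3-3}, $Z$ is a member of $|\mcO_R(\theta,3)|$, respectively $|\mcO_{R'}(\theta,3)|$; computing the canonical class of the ambient weighted projective space bundle as minus the sum of the bidegrees of its Cox ring generators and applying adjunction gives $\omega_Z \cong \mcO_Z(\theta - (n-1) - \lambda - \mu - p\nu, -(p+1))$, and since $\mcL^p = \mcO_Z(p\nu,p)$ again $\omega_Z \otimes \mcL^p \cong \mcO_Z(\theta - \lambda - \mu - (n-1),0)$. Pulling back along $\tau$ yields $\mcM \cong \mcO_X(\theta - \lambda - \mu - (n-1),0)$ in all three cases.

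The substantive input is the critical point analysis establishing the splitting $C_1 \sqcup C_2$ together with the normal forms of Conditions \ref{cond:resolposcrit1}, \ref{cond:resolposcrit2}, which is precisely the content of Sections \ref{sec:dP3case1}--\ref{sec:dP3case3}; granting that, this proposition is the routine assembly of Proposition \ref{prop:existresol}, Lemmas \ref{lem:resol1} and \ref{lem:resol2}, Remark \ref{rem:CHtriv}, and the adjunction computation above. The only place where care is needed is the bookkeeping: tracking which admissible $(\theta,\lambda,\mu,\nu)$ fall under $\theta > 3\nu$, under $\theta = 3\nu$ (further split into $\nu > \mu$, $\nu = \mu > \lambda$, $\nu = \mu = \lambda$), or under $\theta < 3\nu$, and which of these actually produce a positive-dimensional exceptional locus $C_2$.
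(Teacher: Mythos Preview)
Your proof is correct and follows exactly the paper's approach: assembling the critical-point analysis of Sections \ref{sec:dP3case1}--\ref{sec:dP3case3} with Proposition \ref{prop:existresol}, Lemmas \ref{lem:resol1}, \ref{lem:resol2}, Remark \ref{rem:CHtriv}, and the adjunction computation of $\mcM \cong \tau^*(\omega_Z \otimes \mcL^p)$, just as in the proofs of Propositions \ref{prop:goodresoldP1} and \ref{prop:exitCHresoldP2}. One small arithmetic slip: in Settings \ref{setting:dP3-1} and \ref{setting:dP3-3} the second bi-degree of $\omega_Z$ is $-p$, not $-(p+1)$ (the ambient $\omega_R$ has second component $-6$ resp.\ $-5$, and adjunction against a bi-degree $(\theta,3)$ hypersurface gives $-3$ resp.\ $-2$), so that $\omega_Z \otimes \mcL^p$ really does have second component $0$; your stated final formula for $\mcM$ is correct.
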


\subsection{Proofs of Theorem \ref{thm:dP3} and Corollary \ref{cor:dP3}}

\begin{proof}[Proof of \emph{Theorem \ref{thm:dP3}}]
This follows from Proposition \ref{prop:dP3goodresol} and Theorem \ref{thm:spCH0}.
\end{proof}

\begin{proof}[Proof of \emph{Corollary \ref{cor:dP3}}]
This follows from Lemma \ref{lem:dP3ample} and Theorem \ref{thm:dP3}.
\end{proof}

We prove the results in Section \ref{sec:intro}.
Theorems \ref{mainthm1} and \ref{mainthm2} follow from Corollaries \ref{cor:dP1}, \ref{cor:dP2} and \ref{cor:dP3}.
We prove Theorem \ref{mainthm3} in which the variety in (1) corresponds to $(\lambda,\mu,\nu) = (0,0,m)$ in Theorem \ref{thm:dP2} and those in (2), (3) correspond to $(\lambda,\mu,\nu,\theta) = (0,0,m,3m)$, $(0,0,0,d)$ in Theorem \ref{thm:dP3}, respectively.
Therefore Theorem \ref{mainthm3} follows immediately.

\end{document}